\newtheorem{theorem}{Theorem}[section]
\newtheorem{lemma}[theorem]{Lemma}
\newtheorem{corollary}[theorem]{Corollary}
\newtheorem{proposition}[theorem]{Proposition}
\theoremstyle{definition}
\newtheorem{definition}[theorem]{Definition}
\newtheorem{example}[theorem]{Example}
\theoremstyle{remark}
\newtheorem{remark}[theorem]{Remark}
\let\@int\int \def\int{\displaystyle\@int}
\let\@lim\lim \def\lim{\displaystyle\@lim}
\let\@sum\sum \def\sum{\displaystyle\@sum}
\let\@sup\sup \def\sup{\displaystyle\@sup}
\let\@inf\inf \def\inf{\displaystyle\@inf}
\let\@cap\cap \def\cap{\displaystyle\@cap}
\let\@cup\cup \def\cup{\displaystyle\@cup}
\let\@max\max \def\max{\displaystyle\@max}
\let\@min\min \def\min{\displaystyle\@min}
\let\@frac\frac \def\frac{\displaystyle\@frac}
\let\@iint\iint \def\iint{\displaystyle\@iint}
\def\epsilon{\varepsilon}
\numberwithin{equation}{section}
\begin{document}

\begin{frontmatter}
\title{Persistence modules induced by inner functions}

\author[a]{Jiaxing He}
\ead{hejx21@mails.jlu.edu.cn}

\author[a]{Bingzhe Hou}
\ead{houbz@jlu.edu.cn}

\author[a]{Xiao Wang}
\ead{wangxiaotop@jlu.edu.cn}

\author[b]{Yue Xin\corref{*}}
\cortext[*]{Corresponding author.}\ead{2024104@hlju.edu.cn}

\address[a]{School of Mathematics, Jilin University, 130012, Changchun, P. R. China}

\address[b]{School of Mathematical Science,
	Heilongjiang University,
	Harbin, 150080, Heilongjiang, P. R. China}

\begin{abstract}
  As well-known, inner functions play an important role in the study of bounded analytic function theory. In recent years, persistence module theory, as a main tool applied to Topological Data Analysis, has received widespread attention. In this paper, we aim to use persistence module theory to study inner functions. We introduce the persistence modules arised from  the level sets of inner functions. Some properties of these persistence modules are shown. In particular, we prove that the persistence modules (potentially not of locally finite type) induced by a class of inner functions have interval module decompositions. Furthermore, we demonstrate that the interleaving distance of  the persistence modules is continuous with respect to the supremum norm for a class of Blaschke products, which could be used to discuss the path-connectedness of Blaschke products. As an example, we provide an explicit formula for the interleaving distance of the persistence modules induced by the Blaschke products with order two.
\end{abstract}
\begin{keyword}
Inner functions; persistence modules; level sets; interleaving distance; Blaschke products.

\MSC{55N31, 30J05, 30J10, 16D70.}
\end{keyword}
\end{frontmatter}
\tableofcontents

\section{Introduction}

In this paper, we establish a connection between persistence modules and bounded analytic functions. We will first present the historical background of persistence modules and bounded analytic functions, and  then outline our main results below.

\subsection{Overview of persistence modules}

Persistent homology, as a primary tool in TDA (Topological Data Analysis), is a mathematical method for analyzing topological structures of data. In recent years,  scholars have not only conducted studies on directions such as persistence module decomposition and pseudo-metric, but also made progress in interdisciplinary theoretical intersections, such as symplectic geometry and Hamiltonian systems \cite{Viterbo-2022,UZ-2016}, fractal dimensions \cite{Sch-2020,AAFKMNPS-2018}, etc.  Additionally, persistent homology has found successful applications in fields like computational biology \cite{WCW-2020, ACCGRV-2022},  pattern recognition \cite{LFMC-2023, Car-2014}, computer graphics \cite{PSO-2018, COO-2015}, etc.

Persistence modules are the basic algebraic objects  in the study of persistent homology. One of important properties of persistence modules is whether it has an interval module decomposition. Assuming that a persistence module is a direct sum of interval modules, it is found that the collection of such intervals, the barcode, is  useful to classify data. Since Zomorodian and Carlsson  \cite{Zo-Gu-2005} introduced persistence modules of finite type indexed by natural numbers and proved that they can be  decomposed into interval modules,  there are many works concerning the interval module decomposition of persistence modules of finite type \cite{CV-2010, BLO-2022}, and of locally finite type \cite{CB-2015, Bot-2017,BG-2022} in recent years. 

%in the last few years, such as the zigzag persistence modules \cite{CV-2010} and 2-parameter persistence modules \cite{BLO-2022}. In \cite{CB-2015}, Crawley-Boevey proved that any pointwise finite-dimensional persistence module which is of locally finite type is a direct sum of interval modules. There are some works considering the persistence modules of locally finite type, such as infinite zigzag persistence module \cite{Bot-2017} and the locally finite direct sum of sheaves over $\mathbb{R}$ \cite{BG-2022}. 

The interleaving distance, an algebraic generalization of the bottleneck distance, is a useful pseudo-metric on both persistence modules and multiparameter persistence modules. This concept was first introduced by Chazal et al. in \cite{CDGO-2009}. Recently there are some works concerning the existence of $\epsilon$-interleavings which is important to compare two persistence modules,  such as \cite{TP-2024,BG-2022}. Since the algebraic stability theorem was initially proved by Chazal et al. in \cite{CDGO-2009, CDGO-2016}, there are several researchers study the stability of interleaving distance on persistence modules, such as \cite{UB-2015,BL-2018,ML-2015,Blum-Les-2023}.

\subsection{Overview of inner functions}

The topological problems in the space of analytic functions are important subjects that have attracted wide attention, for example, the Corona problem. Let $\mathbb{D}$ be the unit open disk in the complex plane $\mathbb{C}$ and let $\partial\mathbb{D}$ be the boundary of $\mathbb{D}$, i.e., the unit circle. A bounded analytic function $f$ on $\mathbb{D}$ is called an inner function if it has unimodular radial limits almost everywhere on the boundary $\partial\mathbb{D}$ of $\mathbb{D}$. Inner functions play a central role in many important results such as Beurling Theorem.

In the research of topological behaviors in the space of inner functions,  it is useful to study the level sets of inner functions.  Berman \cite{Berman-1984},  Stephenson and  Sundberg \cite{KC1985} conducted  researches on the level sets of inner functions, they proved that two inner functions which share a common level set for some value $r$ in $(0, 1)$ must agree up to a unimodular constant.   Bickel and  Gorkin \cite{KB} posed a version of the conjecture in this setting, called the level set Crouzeix (LSC) conjecture, and established structural and uniqueness properties for (open) level sets of finite Blaschke products that the LSC conjecture can be proved in several cases. Besides, through the concept of level sets,  Cohn \cite{BC} proposed a class of inner functions named one-component inner functions.  Furthermore,  Cohn characterized the Carleson measures for the model spaces $H^2\ominus uH^2$, under the assumption that $u$ is a one-component inner function.  Cima and  Mortini \cite{CM-2017, CM-2020} conducted a further study on the properties of one-component inner functions and provided some equivalent characterizations of one-component inner functions.

Interpolating Blaschke products are a class of important Blaschke products, which are closely related to the Interpolating Problem and Corona Problem. Following from a celebrated result of
Carleson \cite{Car-Len-1958}, one can see that a Blaschke product is interpolating if and only if
its zero points are  uniformly separated. So far, interpolating Blaschke products have received widespread attention. However, there is still an open problem that whether the set of all interpolation Blaschke products is dense in the inner function space.

The path-connectedness in the space of inner functions is an important research subject.  Nestoridis studied the invariant and noninvariant connected components of the inner functions space $\mathcal{F}$. He proved that the inner functions $d(z)={\rm exp} \{(z+1)/(z-1)\}$ and $zd$ belong to the same connected component \cite{Ne79}. By imposing restrictions on the diameters of the connected components of the level sets,  Nestoridis gave a family of inner functions, denoted by $H$, such that for every $B\in H$, $B$ and $zB$ don't belong to the same component \cite{Ne80}.

Through the level sets of functions, we  are able to establish a connection between bounded analytic function theory and persistent homology theory. Based on the definition of the persistence modules induced by inner functions, we can analyze the topological properties of inner functions  by observing the persistence modules and their corresponding barcodes.

%In the first section of this article, we will introduce a class of persistence free module which can be a persistence module of infinite type, and generalize the relevant theories of the persistence modules of finite type. In the second section, we will take a special class of inner functions, Blaschke products, as a prototype to establish the theory of Blaschke-Type persistence modules, discuss their properties, and examine the behavior of interleaving distance on the category of these persistence modules. In the third section...

%In the second section of this article, we will introduce some relevant definitions and theorems. In the third section, we discuss the persistent homology group and morse properties about inner functions. In the forth section, we will discuss the critical points properties of Blaschke products with property $\mathfrak{B}$, and we decompose the persistence module induced by Blaschke products with property $\mathfrak{B}$ into interval modules. In the fifth section, we will take a special class of inner functions, Blaschke products with property $\mathfrak{H}$, as a prototype to establish the theory of  persistence modules, discuss their properties, and examine the behavior of interleaving distance on the category of these persistence modules. In the sixth section, we will discuss the properties and distance on persistence module induced by Blaschke products on \textcolor{red}{multi-disk}.

\subsection{Statements of main results}
In this subsection, we outline our main results and leave the proofs in the later sections.

First, we introduce the definition of persistence modules induced by inner functions and their properties. Consider an inner function $u(z)$, we define the $\theta$-level set of $u$ as follows,
\begin{equation*}
	\Omega_{u,\theta}:=\{z\in\mathbb{D};|u(z)|<\theta\}.
\end{equation*}
Naturally, the parameter $\theta$ induces a filtration on $\mathbb{D}$, i.e., $\{\Omega_{u,\theta}\}_{\theta\in (0, 1)}$ and natural inclusions $i_{\eta, \theta}: \Omega_{u,\eta} \hookrightarrow \Omega_{u,\theta}$ for any $0<\eta\leq\theta<1$. 
For technical reasons, we would like to introduce a change of variable, for any $\theta\in (0, 1)$, let
\[
t(\theta)=\ln\frac{1+\theta}{1-\theta}.
\]
We also write
\begin{equation*}
	\Sigma_{u,t(\theta)}:=\Omega_{u,\theta}=\{z\in\mathbb{D};|u(z)|<\theta\},
\end{equation*}
and when parameter is obvious from context, we write $t$ for $t(\theta)$. Also we write $\theta = \theta(t)=\frac{\mathrm{e}^t-1}{\mathrm{e}^t+1}$ to be the inverse function of $t(\theta)$.

Naturally, the parameter $t$ also induces a filtration on $\mathbb{D}$, i.e., $\{\Sigma_{u,t}\}_{t\in (0, +\infty)}$ and natural inclusion  $i_{s,t}: \Sigma_{u,s} \hookrightarrow \Sigma_{u,t}$ for any $0<s\leq t<+\infty$. 

For an arbitrary fixed field $\mathbb{F}$, let $V_t^{|u|}=\oplus_{k}(V_t^{|u|})_k$   be the  homology groups  of the above sublevel sets $\Sigma_{u,t}$ for $t\in (0, +\infty)$, that is,
\[
(V_t^{|u|})_k:=H_{k}(\Sigma_{u,t}; \mathbb{F}).
\] 
For any $0<s\leq t<+\infty$, the natural inclusion $i_{s,t}: \Sigma_{u,s} \hookrightarrow \Sigma_{u,t}$ induces the homomorphism 
\[
\pi_{s,t}:=(i_{s,t})_{k}:(V_s^{|u|})_k \to (V_t^{|u|})_k.
\] 
Then we get a persistence module $(V^{|u|},\pi)$, where $V^{|u|}$ is  the collection of $\{V_t^{|u|}\}_{t\in (0,+\infty)}$.  For convenience, we write   the persistence module induced by the level sets of inner function  $u$ as $\mathbb{U}=(V^{|u|},\pi)$.

Now, let us see some fundamental properties of the components of level sets for inner functions.
The following statement $(1)$ is "well known", to our best knowledge, it first appears in Tsuji's book \cite[Theorem VIII]{Ts-1975} and one can find a proof in \cite{CM-2017}.

\begin{lemma}[\cite{Ts-1975}, \cite{CM-2017}]\label{sc-components}
	Given a non-constant inner function $u$ in $H^{\infty}$ and $\eta\in(0,1)$, let
	$\Omega := \Omega_{u}(\eta) = \{z\in\mathbb{D}; |u(z)| < \eta\}$ be a level set. Suppose that $\Omega_0$ is a component (= maximal connected subset) of $\Omega$. Then
	\begin{enumerate}
		\item $\Omega_0$ is a simply connected domain; that is, $\mathbb{C}\setminus\Omega_0$ has no bounded components.
		\item $\inf_{\Omega_0} |u| = 0$.
	\end{enumerate}
\end{lemma}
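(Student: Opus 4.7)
The plan is to handle parts (1) and (2) separately, with (1) as a maximum-modulus argument and (2) as a conformal-transfer plus contradiction argument that leans on (1).

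For part (1), I will verify the classical criterion that a planar region is simply connected iff the bounded complementary component $\mathrm{Int}(\gamma)$ of every Jordan curve $\gamma$ inside it is again inside it. Given such a curve $\gamma \subset \Omega_0 \subset \mathbb{D}$, compactness forces $\mathrm{Int}(\gamma) \cup \gamma$ to be a compact subset of $\mathbb{D}$. Since $\gamma \subset \Omega$ and $|u|$ is continuous on $\gamma$, we have $\max_\gamma |u| < \eta$ strictly, so the maximum modulus principle forces $|u| < \eta$ throughout $\mathrm{Int}(\gamma)$. Hence $\mathrm{Int}(\gamma) \cup \gamma$ is a connected subset of $\Omega$ meeting $\Omega_0$ through $\gamma$, which places it entirely inside the connected component $\Omega_0$. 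Thus $\mathrm{Int}(\gamma) \subset \Omega_0$, giving simple connectivity.

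For part (2), I will argue by contradiction, assuming $c := \inf_{\Omega_0}|u| > 0$, so $u$ is zero-free on $\Omega_0$. Using part (1), I invoke the Riemann mapping theorem to obtain a biholomorphism $\varphi : \mathbb{D} \to \Omega_0$, and set $v := (u \circ \varphi)/\eta$, a zero-free holomorphic self-map of $\mathbb{D}$ with $c/\eta \leq |v| < 1$. The key step is to show that $v$ is an inner function. I split $\partial \mathbb{D}$ according to the Fatou boundary values of $\varphi$: let $E_1 = \{\zeta : \varphi^*(\zeta) \in \partial \Omega_0 \cap \mathbb{D}\}$ and $E_2 = \{\zeta : \varphi^*(\zeta) \in \partial \mathbb{D}\}$. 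On $E_1$, continuity of $u$ together with $|u| = \eta$ on $\partial \Omega_0 \cap \mathbb{D}$ gives $|v^*| = 1$. On $E_2$, substitution of non-tangential limits under the composition combined with $|u^*| = 1$ a.e. on $\partial \mathbb{D}$ would formally give $|v^*| = 1/\eta > 1$ a.e., which is incompatible with the bound $|v| < 1$, so $E_2$ must have Lebesgue measure zero. Hence $|v^*| = 1$ a.e. and $v$ is a zero-free inner function. By the inner–outer factorization, $v = \lambda S$ with $\lambda \in \partial \mathbb{D}$ and $S$ singular inner. If $S$ is non-constant, the standard fact $\inf_\mathbb{D}|S| = 0$ contradicts $|v| \geq c/\eta > 0$; if $S$ is constant, then $u \circ \varphi$ is constant on $\mathbb{D}$, and the identity theorem forces $u$ to be constant on $\mathbb{D}$, contradicting the hypothesis. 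Either branch yields a contradiction, so $\inf_{\Omega_0}|u| = 0$.

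The main obstacle is the justification that $v$ is inner, specifically showing $v^*(\zeta) = u^*(\varphi^*(\zeta))$ for a.e. $\zeta \in E_2$. This is a non-trivial substitution of non-tangential limits under a composition, relying on Löwner's lemma (for a holomorphic self-map $\varphi$, radial approach becomes non-tangential under $\varphi$ at a.e. $\zeta$ with $|\varphi^*(\zeta)| = 1$) and Lindelöf's theorem for the bounded analytic function $u$, together with an absolute-continuity-type control of $\varphi^*|_{E_2}$ ensuring that the full-measure condition $|u^*| = 1$ transfers to $E_2$. Once this is in place, the remaining appeal to the inner–outer factorization and the well-known behaviour of non-constant singular inner functions near the support of the representing measure is routine.
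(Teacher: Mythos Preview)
The paper does not supply its own proof of this lemma: it is stated as a cited result (Tsuji, Cima--Mortini), so there is no in-paper argument to compare against. I can only assess your proposal on its own merits.

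Your argument for (1) is correct and standard: the maximum modulus principle forces $\mathrm{Int}(\gamma)\subset\Omega$, and connectedness then places it in $\Omega_0$.

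Your strategy for (2) is also sound. The reduction to a zero-free inner function $v=(u\circ\varphi)/\eta$ on $\mathbb{D}$ and the subsequent dichotomy (non-constant singular inner functions have infimum $0$; constant $v$ forces constant $u$) is exactly right. The one genuine technical point is, as you say, establishing $|v^*|=1$ a.e., and in particular handling $E_2$. The tools you name are the correct ones: the composition identity $(u\circ\varphi)^*=u^*\circ\varphi^*$ a.e.\ on $\{|\varphi^*|=1\}$ is precisely the Ryff-type result that combines Lindel\"of's theorem with the fact that for a.e.\ $\zeta\in E_2$ the image $r\mapsto\varphi(r\zeta)$ is a non-tangential approach curve (a consequence of the angular-derivative theory underlying L\"owner's lemma), while L\"owner's inequality $|(\varphi^*)^{-1}(A)\cap E_2|\le |A|$ supplies the absolute-continuity transfer you need for the null set where $|u^*|\ne 1$. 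With those in place, $|v^*|=1/\eta$ a.e.\ on $E_2$ would follow, contradicting $|v^*|\le 1$ and forcing $|E_2|=0$; the rest is routine.

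One small addendum worth making explicit in a write-up: the radial limits $\varphi^*(\zeta)$, where they exist, lie in $\partial\Omega_0$ (not in $\Omega_0$) because $\varphi$ is univalent, so $E_1\cup E_2$ really does exhaust $\partial\mathbb{D}$ up to a null set.
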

By Lemma \ref{sc-components}, we have an immediate corollary,
\begin{corollary}\label{HB}
	Let $u(z)$ be an inner function. For any $t\in (0,+\infty)$, we have $H_{i}(\Sigma_{u,t})=0$, $i\geq 1$.
\end{corollary}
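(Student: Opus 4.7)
The corollary should follow almost immediately from Lemma \ref{sc-components} together with a classical fact about simply connected planar domains, so the plan is to assemble these ingredients carefully.

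First I would fix $t\in (0,+\infty)$, set $\theta=\theta(t)=\frac{\mathrm{e}^{t}-1}{\mathrm{e}^{t}+1}\in(0,1)$, and note that $\Sigma_{u,t}=\Omega_{u,\theta}$ is an open subset of $\mathbb{D}\subset\mathbb{C}$. As an open subset of a locally path-connected space, it decomposes as a (possibly infinite) disjoint union of its connected components, each of which is open in $\mathbb{D}$. By Lemma \ref{sc-components}(1), every such component $\Omega_{\alpha}$ is simply connected.

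Next I would invoke the classical fact that every simply connected proper open subset of $\mathbb{C}$ is homeomorphic, indeed biholomorphic, to $\mathbb{D}$ via the Riemann mapping theorem (the alternative, $\Omega_{\alpha}=\mathbb{C}$, cannot occur since $\Omega_{\alpha}\subset\mathbb{D}$). In particular each $\Omega_{\alpha}$ is contractible, and hence $H_{i}(\Omega_{\alpha};\mathbb{F})=0$ for all $i\geq 1$.

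Finally, since singular homology commutes with arbitrary disjoint unions,
\[
H_{i}(\Sigma_{u,t};\mathbb{F})\;=\;\bigoplus_{\alpha}H_{i}(\Omega_{\alpha};\mathbb{F})\;=\;0
\qquad(i\geq 1),
\]
which is exactly the claim. I do not expect a real obstacle here; the only subtle point is remembering that there may be uncountably many components, so one should justify the use of the disjoint-union/coproduct property of singular homology rather than an induction on finitely many pieces. Part (2) of Lemma \ref{sc-components} is not needed for this corollary and can be ignored.
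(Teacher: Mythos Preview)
Your argument is correct and is precisely the natural way to make explicit what the paper leaves as an ``immediate corollary'' of Lemma~\ref{sc-components}; the paper supplies no further proof. The only micro-point you might add for completeness is that a constant inner function has $\Sigma_{u,t}=\emptyset$, so the non-constancy hypothesis in Lemma~\ref{sc-components} is harmless.
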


By Corollary \ref{HB}, we only need to focus on the zeroth homology groups  of the sublevel sets, i.e., $V^{|u|}=(V^{|u|})_0$.

In Section $3$,  we will show some fundamental properties of the persistence modules induced by inner functions. For example, the persistence modules induced by inner functions are invariant under M\"{o}bius transformations. More precisely, for any inner function $u$, if $v=u\circ\varphi$ is the composition of $u$ and a M\"{o}bius transformation $\varphi$, then  the persistence modules induced by $u$ and $v$ are isomorphic.

In analytic function theory, some special classes of inner functions play an important role. In  \cite{Berman-1984}, Berman characterized a class of inner functions through radical limits. When studying the connectedness of inner functions, Nestoridis \cite{Ne80} introduced a class of Blaschke products in noninvariant connected components. We extend the conditions  given by Berman and Nestoridis as follows.

\begin{definition}
	Let $u$ be an inner function,
	\begin{enumerate}
		\item (\textbf{Property} \bm{$\mathfrak{B}$})  We say that $u$ has Property \bm{$\mathfrak{B}$}, if for any $\theta\in (0,1)$,  the closure of each component of the level set $\Omega_{u,\theta}$ is contained in the unit open disk $\mathbb{D}$.
		\item (\textbf{Weak Property} \bm{$\mathfrak{B}$})  We say that $u$ has weak Property \bm{$\mathfrak{B}$}, if there exists $\theta\in(0,1)$ such that the closure of each component of the level set $\Omega_{u,\theta}$ is contained  in the unit open disk $\mathbb{D}$. Moreover, we call such an inner function $u$ is of \bm{$\theta$}-weak Property \bm{$\mathfrak{B}$}.
		\item (\textbf{Strong Property} \bm{$\mathfrak{B}$})  We say that $u$ has strong Property \bm{$\mathfrak{B}$}, if $u$ has Property \bm{$\mathfrak{B}$} and there exists $\eta\in(0,1)$ such that $\delta_{u,\eta}<1$, where
			\[
			\delta_{u,\eta}=\sup\{\rho(z_1, z_2);~z_1 \ \text{and} \ z_2 \ \text{belong to the same component of} \ \Omega_{u,\eta}\},  
			\]
		and $\rho(z_1, z_2) = |\frac{z_1-z_2}{1-\overline{z_1}z_2}|$ is the pseudo-hyperbolic distance between $z_1$ and $z_2$.
		Moreover, we call such an inner function $u$ is of \bm{$\eta$}-strong Property \bm{$\mathfrak{B}$}.
		\item (\textbf{Property} \bm{$\mathfrak{H}$}) We say that $u$ has  Property \bm{$\mathfrak{H}$}, if $\delta_{u,\eta}<1$  for every $\eta\in(0,1)$.
		\item (\textbf{Weak Property} \bm{$\mathfrak{H}$})  We say that $u$ has weak Property \bm{$\mathfrak{H}$}, if $u$ has weak Property \bm{$\mathfrak{B}$} and there exists $\eta\in(0,1)$, such that $\delta_{u,\eta}<1$.  Moreover, we call such an inner function $u$ is of \bm{$\eta$}-weak Property \bm{$\mathfrak{H}$}.
	\end{enumerate}
\end{definition}
\begin{figure}[h]
	\centering
	\includegraphics[width=1\textwidth]{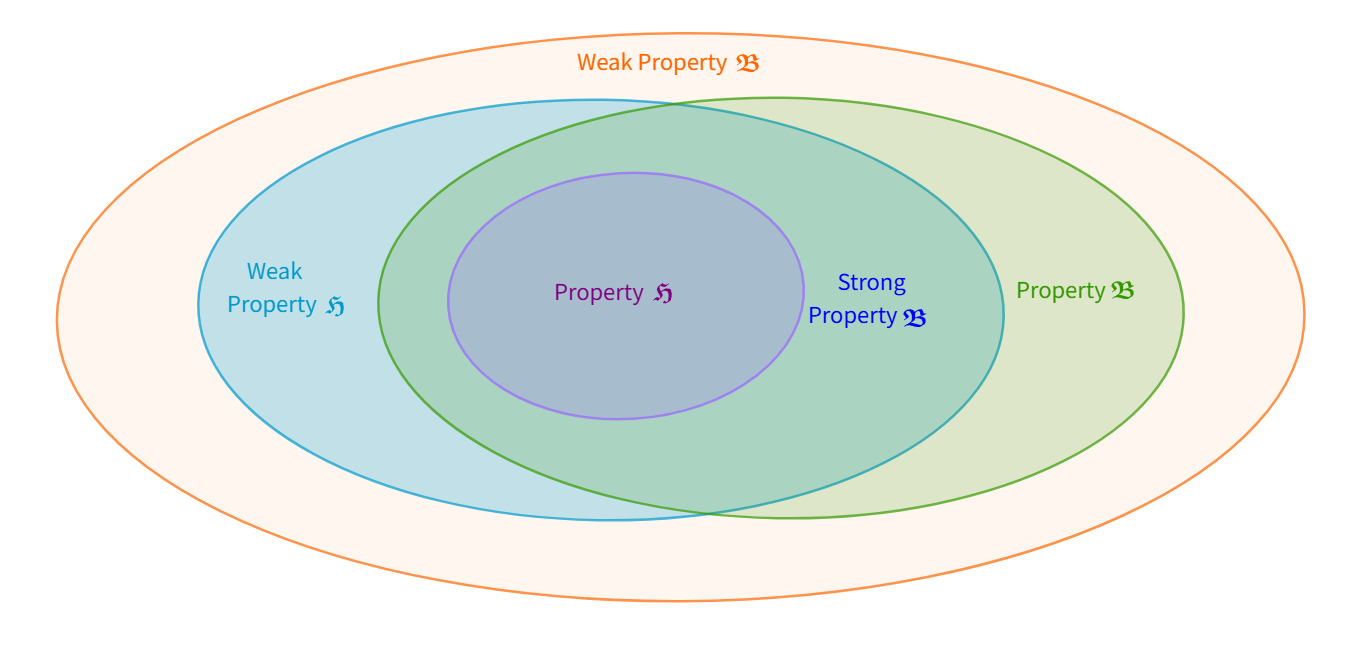}
	\caption{Relationship}
	\label{fig:set}
\end{figure}

The relationship between those properties is demonstrated in Figure \ref{fig:set}.
We can show that each inner function with weak Property \bm{$\mathfrak{B}$} is a Blaschke product, each interpolating Blaschke product has  weak Property \bm{$\mathfrak{H}$} and each Blaschke product with  weak Property \bm{$\mathfrak{H}$} is a Carleson-Newman Blaschke product. In this article, we mainly focus on the persistence modules induced by Blaschke products with those properties.

 In Section $3$, we will give a topological description of the critical points of inner functions $u$ with Property \bm{$\mathfrak{B}$}, more precisely, for a critical point $w$ with order $m$, there will be $m+1$ components merging at $w$. Then we obtain the interval module decompositions of the associate persistence modules as in the following  Theorem A.
Let  $\mathcal{Z}(u)$ be the set of zero points of $u$. 

\vspace{2mm}

\noindent \textbf{Theorem A.} \ 
Suppose that $B$ is a Blaschke product with property $\mathfrak{B}$. For each $\omega_j\in \mathcal{Z}(B')\setminus\mathcal{Z}(B)$, denote the order of $\omega_{j}$ by $m_j$, i.e., $B^{(k)}(\omega_{j})=0$, $k=1,\ldots,m_{j}$  and $B^{(m_j+1)}(w_j)\neq 0$. The persistence module $\mathbb{B}$ induced by $B$ can be decomposed into interval modules,
$$
\mathbb{B}\cong\mathbb{F}(0,+\infty)\oplus(\oplus_{j} (\mathbb{F}(0,s_j])^{m_{j}}),
$$
where $s_j=\ln\frac{1+|B(w_j)|}{1-|B(w_j)|}$.
\vspace{2mm}

Notice that there may be infinitely many $w_{j}$ corresponding to the same critical value, which implies that the associated persistence modules are potentially not of locally finite type.

In Section $4$, we study the relationship between  the distance induced by supremum norm $\|\cdot\|_{\infty}$ on inner functions with strong Property $\mathfrak{B}$ and the interleaving distance $d_{int}$ on the induced persistence modules.

\vspace{2mm}

\noindent \textbf{Theorem B.} \ 
Suppose that $B(z)$ is a Blaschke product with strong Property $\mathfrak{B}$. Then, for any $\epsilon>0$, there exists an $\eta\in(0,1)$ such that for any Blaschke product $\widetilde{B}(z)$ with strong Property $\mathfrak{B}$,  $\|\widetilde{B}(z)-B(z)\|_{\infty}<\eta$ implies $d_{int}(\mathbb{B},\widetilde{\mathbb{B}})<\epsilon $. 
\vspace{2mm}

Theorem B means  the interleaving distance $d_{int}$ is continuous with respect to  the  distance induced by supremum norm for Blaschke products  with strong Property $\mathfrak{B}$.
To prove Theorem B, we provide a series of lemmas. Those lemmas are not only useful when proving Theorem B, but also valuable in analytic function theory. For instance, for any  Blaschke product  $B(z)$ with strong Property $\mathfrak{B}$, $\delta_{B, \theta}\rightarrow 0$ as $\theta\rightarrow 0$. Besides, the interleaving distance also could be used to study the problems in analytic function theory. For instance, if the interleaving distance between the persistence modules induced by two Blaschke products with strong Property $\mathfrak{B}$ is infinite, then the two Blaschke products can not be connected by any path in the family of Blaschke products with strong Property $\mathfrak{B}$.

In the last section, we provide an explicit formula for the interleaving distance of the persistence modules induced by  Blaschke products with order two.  Let $B_{1}$ be the Blashcke product with two zero points $\beta_{0}$ and $\beta_{1}$, and let $B_{2}$ be the Blashcke product with two zero points with $\gamma_{0}$ and $\gamma_{1}$. Then, 
\[
d_{int}(\mathbb{B}_{1},\mathbb{B}_{2})=\min\left(\max(\frac{1}{2}\ln\frac{1}{\sqrt{1-|w_{2}|^{2}}},\frac{1}{2}\ln\frac{1}{\sqrt{1-|w_{1}|^{2}}}),{\big \vert}\ln\frac{\sqrt{1-|w_{1}|^{2}}}{\sqrt{1-|w_{2}|^{2}}}{\big \vert}\right),
\] 
where $w_{1}=\frac{\beta_{0}-\beta_{1}}{1-\overline{\beta_{0}}\beta_{1}}$, $w_{2}=\frac{\gamma_{0}-\gamma_{1}}{1-\overline{\gamma_{0}}\gamma_{1}}$.

\section{Preliminaries}

In this section, we will introduce some definitions related to  persistence modules and Blaschke products. Firstly, we provide the following definitions of persistence modules (compare \cite{ carlsson2009theory, PRSZ-2020, CDGO-2016}). Let $\mathbb{F}$ be an arbitrary field.

\begin{definition}\label{def_pm}
	A persistence module is a pair $(V,\pi)$, where $V$ is a collection $\{V_{t};t\in\mathbb{R}\}$ of vector spaces over $\mathbb{F}$, and $\pi$ is a collection $\{ \pi_{s,t}\}$ of linear maps $\pi_{s,t}: V_{s}\rightarrow V_{t}$ for all $s\leq t$ in $\mathbb{R}$ such that (1), (2) and (3) below hold,
	\enumerate{
	\item[(1)] ($Persistence$) For any $s\leq t\leq r$ one has $\pi_{s,r}=\pi_{t,r}\circ\pi_{s,t}$, i.e., the following diagram commutes:
	\begin{center}
		\begin{tikzpicture}
			\node[inner sep=1pt] (a) at (0,0) {$V_{s}$};
			\node[inner sep=1pt] (b) at (2,0) {$V_{t}$};
			\node[inner sep=1pt] (c) at (4,0) {$V_{r}$};
			\node at (1,-0.25) {$\pi_{s,t}$};
			\node at (2,0.85) {$\pi_{s,r}$};
			\node at (3,-0.25) {$\pi_{t,r}$};
			\draw[-latex] (a.0) -- (b.180);
			\draw[-latex] (b.0) -- (c.180);
			\draw[-latex] (a.40) to[out = 30,in=150]  (c.150);
		\end{tikzpicture}
	\end{center}
	\item[(2)] $\pi_{t,t}=\mathbbm{1}_{V_{t}}$ for any $t\in \mathbb{R}$, where $\mathbbm{1}_{V_{t}}$ represents the identity map on $V_t$.
	\item[(3)] There exits an $s_{-}\in \mathbb{R}$, such that $V_{s}=0$ for any $s\leq s_{-}$.
	
	We denote $(V,\pi)$ by $\mathbb{V}$ in brief.
	}
\end{definition}

	A point $t\in\mathbb{R}$ is called spectral for a persistence module $(V,\pi)$ if for any neighborhood $U\ni t$ there exists $s<r$ in $U$, such that $\pi_{s,r}:V_s\rightarrow V_r$ is not an isomorphism. Denote by $\text{Spec}V=\text{Spec}(V,\pi)$ the collection of spectral points of $(V,\pi)$ together with $+\infty$. This set will be called the spectrum of $\mathbb{V}$.

\begin{remark}
	A persistence module $(V,\pi)$ is called of locally finite type if  $V$ is a collection $\{V_t;t\in\mathbb{R}\}$ of finite-dimensional vector spaces over $\mathbb{F}$ and the following conditions (a) and (b) also hold.
	\enumerate{
	\item[(a)] The spectrum of $\mathbb{V}$ is a closed discrete bounded from below subset of $\mathbb{R}$ (but not necessarily finite).
	\item[(b)] ($Semicontinuity$) For any $t\in\mathbb{R}$ and any $s\leq t$ sufficiently close to $t$, the map $\pi_{s,t}$ is an isomorphism.
	}
\end{remark}

\begin{remark}
	A persistence module $(V,\pi)$ is called of finite type if  $V$ is a collection $\{V_t;t\in\mathbb{R}\}$ of finite-dimensional vector spaces over $\mathbb{F}$ and the following conditions (c)  also hold.
	\enumerate{
		\item[(c)] For all but a finite number of points $t\in\mathbb{R}$ there exists a neighborhood $U$ of $t$ such that $\pi_{s,t}$ is an isomorphism for any $s<t$ in $U$.
	}
\end{remark}

Let $(V,\pi)$, $(V',\pi')$ be two persistence modules, 
\begin{definition}
	A (persistence) morphism $\varphi:(V,\pi)\rightarrow(V',\pi')$ is a family of linear maps $\varphi_t:V_{ t} \rightarrow V_{ t}'$ such that the following diagram commmutes for all $s\leq t$,
	\begin{center}
		\begin{tikzcd}
			V_{s} \arrow{d}{\varphi_{s}} \arrow{r}{\pi_{s,t}} & V_{ t} \arrow{d}{\varphi_{t}}\\
			V_{ s}' \arrow{r}{\pi_{s,t}'}         & V_{ t}'
		\end{tikzcd}.
	\end{center}
\end{definition}

Two persistence modules $(V,\pi)$ and  $(V',\pi')$ are isomorphic if there exist two morphisms $\varphi:\mathbb{V}\rightarrow \mathbb{V}'$ and $\psi :\mathbb{V}'\rightarrow \mathbb{V}$ such that both compositions $\psi\circ\varphi$ and $\psi\circ\varphi$ are the identity morphisms on the corresponding persistence modules, where the identity morphism on $\mathbb{V}$ is the identity on $V_t$ for all t.

Let $(V,\pi)$ be a persistence module and $\Delta \in \mathbb{R}$, define a persistence module $(V[\Delta],\pi[\Delta])$ by taking $(V[\Delta])_t=V_{t+\Delta}$ and $(\pi[\Delta])_{s,t}=\pi_{s+\Delta,t+\Delta}$. This new persistence module is called the $\Delta$-shift of $V$. For $\Delta>0$, the map $\Phi^{\Delta}:(V,\pi)\rightarrow(V[\Delta],\pi[\Delta])$ defined by $\Phi^{\Delta}_{t}=\pi_{t,t+\Delta}$ is a morphism of persistence modules (it will be called a $\Delta$-shift morphism). Also, if we have a morphism $F:\mathbb{V}\rightarrow \mathbb{W}$ between two persistence modules, we denote by $F[\Delta]:\mathbb{V}[\Delta]\rightarrow \mathbb{W}[\Delta]$ the corresponding morphism between their $\Delta$-shifts.

\begin{definition}
	Let $(V,\pi)$ be a persistence module. A persistence submodule  $(W,\tilde{\pi})$ of $\mathbb{V}$ is a collection of subspaces $W_{s}\subseteq V_{s}$ for all $s\in \mathbb{R}$, such that the maps $\tilde{\pi}_{s,t}:=\pi_{s,t}|_{W_{s}}:W_{s}\rightarrow W_{t}$ are well-defined for all $s<t$, and yield a persistence module $(W,\tilde{\pi})$. 
\end{definition}

\begin{definition}
	Let $(V,\pi)$, $(V',\pi')$ be two persistence modules. Their direct sum $(W,\theta)$ is the persistence module whose underlying modules are $W_{t} = V_{t}\oplus V'_{t}$ and accordingly, $\theta_{s,t}=\pi_{s,t}\oplus\pi'_{s,t}$.
\end{definition}

As an basic example of persistence modules, we  introduce the concept of interval module.

\begin{example}(Interval modules)
	For an interval $(a,b]$ (with $b< +\infty$), define a persistence module $\mathbb{F}(a,b]$ as follows:
	$$
	\mathbb{F}(a,b]_{t}=\left\{
	\begin{array}{rcl}
		\mathbb{F}       &   &   {\text{if}\ t\in (a,b]},\\
		0       &   &   {\text{otherwise}},
	\end{array}
	\right.	
	\pi_{s,t}=\left\{
	\begin{array}{rcl}
		\mathbbm{1}       &   &   {\text{if}\ s,t\in (a,b]},\\
		0       &   &   {\text{otherwise}}.
	\end{array}
	\right.
	$$
	For an interval $(a,+\infty)$, define a persistence module $\mathbb{F}(a,+\infty)$ as follows:
	$$
	\mathbb{F}(a,+\infty)_{t}=\left\{
	\begin{array}{rcl}
		\mathbb{F}       &   &   {\text{if}\ t\in (a,+\infty)},\\
		0       &   &   {\text{otherwise}},
	\end{array}
	\right.	
	\pi_{s,t}=\left\{
	\begin{array}{rcl}
		\mathbbm{1}       &   &   {\text{if}\ s,t\in (a,+\infty)},\\
		0       &   &   {\text{otherwise}}.
	\end{array}
	\right.
	$$ Such persistence modules will be called interval modules. 
\end{example}

Moreover, a persistence module is indecomposable if it is not isomorphic to a direct sum of two non-trivial persistence modules. In particular, interval modules are indecomposable. For more details, we refer to \cite{PRSZ-2020,CDGO-2016}.

%When given a an indexed family of intervals $(J_{l}\ |\ l\in L)$, we can synthesise a persistence module $\bigoplus_{l\in L} {\mathbb{F}(I)}^{J_{l}}$, whose isomorphism type depends only on the multiset $\{J_{l}\ |\ l\in L\}$. A multiset is a pair
%$\mathbf{A}=(S,m)$ where $S$ is a set and
%$$
%m\ :\ S\rightarrow \{1,2,3,...\}\cup\{\infty\}
%$$
%is the multiplicity function, which tells us how many times each element of %$S$ occurs in $\mathbf{A}$.

\begin{definition}
	Given $\Delta>0$, we say that two persistence modules $\mathbb{V}$ and $\mathbb{W}$ are $\Delta$-interleaved if there exist two morphisms $F:\mathbb{V}\rightarrow \mathbb{W}[\Delta]$ and $G: \mathbb{W}\rightarrow \mathbb{V}[\Delta]$, such that the following diagrams commute:
	\begin{center}
		\begin{tikzpicture}
			\node[inner sep=1pt] (a) at (0,0) {$\mathbb{V}$};
			\node[inner sep=1pt] (b) at (2,0) {$\mathbb{W}[\Delta]$};
			\node[inner sep=1pt] (c) at (4,0) {$\mathbb{V}[2\Delta]$};
			\node at (1,-0.25) {$F$};
			\node at (2,0.95) {$\Phi_{\mathbb{V}}^{2\Delta}$};
			\node at (3,-0.25) {$G[\Delta]$};
			\draw[-latex] (a.0) -- (b.180);
			\draw[-latex] (b.0) -- (c.180);
			\draw[-latex] (a.40) to[out = 30,in=150]  (c.160);
		\end{tikzpicture}\ \ \
		\begin{tikzpicture}
			\node[inner sep=1pt] (a) at (0,0) {$\mathbb{W}$};
			\node[inner sep=1pt] (b) at (2,0) {$\mathbb{V}[\Delta]$};
			\node[inner sep=1pt] (c) at (4,0) {$\mathbb{W}[2\Delta]$};
			\node at (1,-0.25) {$G$};
			\node at (2,0.95) {$\Phi_{\mathbb{W}}^{2\Delta}$};
			\node at (3,-0.25) {$F[\Delta]$};
			\draw[-latex] (a.0) -- (b.180);
			\draw[-latex] (b.0) -- (c.180);
			\draw[-latex] (a.40) to[out = 30,in=150]  (c.160);
		\end{tikzpicture}
	\end{center}
	where $\Phi_{\mathbb{V}}^{2\Delta}$ and $\Phi_{\mathbb{W}}^{2\Delta}$ are the shift morphisms. We also refer to such a pair of morphisms $F$ and $G$ as $\Delta$-interleaving morphisms.
\end{definition}

\begin{definition}
	For two persistence modules $\mathbb{V}$ and $\mathbb{W}$, we define the interleaving distance between them to be
	$$
	d_{int}(\mathbb{V},\mathbb{W})=\text{inf}\{\Delta>0\ |\ \mathbb{V}\  \text{and} \ \mathbb{W} \  \text{are} \  \Delta-\text{interleaved}\}.
	$$
\end{definition}

Note that $d_{int}$  is a pseudo-metric on the set of persistence modules \cite{PRSZ-2020}. The following example shows that
$d_{int}$ is not a metric.
\begin{example}[\cite{ML-2015}]
	Let $\mathbb{M}$ be the 1-module\footnote{Here the 1-module means the single parameter persistence modules. Moreover, Lesnick in \cite{ML-2015} considered n-dimensional persistence modules.} with $\mathbb{M}_{0}=F$ and $\mathbb{M}_{a}=0$ if $a\neq 0$. Let $\mathbb{N}$ be the trivial 1-module. 
\end{example}

\begin{remark}
	Then $\mathbb{M}$ and $\mathbb{N}$ are not isomorphic, and so are not $0$-interleaved, but it is easy to check that $\mathbb{M}$ and $\mathbb{N}$ are $\Delta$-interleaved for any $\Delta>0$. Thus, $d_{int}(\mathbb{M},\mathbb{N})=0$.
	
\end{remark}

We next introduce the bottleneck distance $d_{bot}$ on the space of barcodes. A barcode $\mathcal{B}=\{(I_i,m_i)\}$ is a family of intervals $I_{i}$ with given multiplicities where $m_i$ can be infinite, where each interval $I_i$ is either  $(a,b]$ or  $(a,+\infty)$. The intervals in a barcode are sometimes called bars.

Given an interval $I=(a,b]$, denote by $I^{-\Delta}=(a-\Delta,b+\Delta]$ the interval obtained from $I$ by expanding by $\Delta$ on both sides. Let $\mathcal{B}$ be a barcode. For $\epsilon>0$, denoted by $\mathcal{B}_{\epsilon}$ the set of all bars from $\mathcal{B}$ of length greater than $\epsilon$.

A matching between two finite multi-sets $X$, $Y$ is a bijection $\mu:X'\rightarrow Y'$, where $X'\subset X$, $Y'\subset Y$. In this case, $X'=\text{coim}\mu$, $Y'=\text{im}\mu$, and we say that elements of $X'$ and $Y'$ are matched. If an element appears in the multi-set several times, we treat its different copies separately, e.g., it could happen that only some of its copies are matched.

\begin{definition}
	A $\Delta$-matching between two barcodes $\mathcal{B}$ and $\mathcal{C}$ is a matching $\mu:\mathcal{B}\rightarrow\mathcal{C}$ such that:\\
	\quad(1) $\mathcal{B}_{2\Delta}\subset\text{coim}\mu$,\\
	\quad(2) $\mathcal{C}_{2\Delta}\subset\text{im}\mu$,\\
	\quad(3) If $\mu(I)=J$, then $I\subset J^{-\Delta}$, $J\subset I^{-\Delta}$.
	
	$d_{bot}(\mathcal{B},\mathcal{C})$ between two barcodes $\mathcal{B}$ and $\mathcal{C}$ is defined to be the infimum over all $\Delta$ for which there is a $\Delta$-matching between $\mathcal{B}$ and $\mathcal{C}$.
\end{definition}

 We denote by $\mathcal{B}(\mathbb{V})$ the barcode corresponding to a persistence module $\mathbb{V}$. To compute the distance between the persistence modules induced by finite Blaschke products, we introduce the following isometry theorem about the persistence module of finite type.

\begin{theorem}[\cite{PRSZ-2020}]
	The map $\mathbb{V}\mapsto\mathcal{B}(\mathbb{V})$ is an isometry, i.e., for any two  persistence modules of finite type $\mathbb{V}$, $\mathbb{W}$, we have $d_{int}(\mathbb{V},\mathbb{W})=d_{bot}(\mathcal{B}(\mathbb{V}),\mathcal{B}(\mathbb{W}))$.
\end{theorem}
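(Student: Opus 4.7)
The plan is to reduce the isometry to the interval case via structure theorems, then prove the two inequalities $d_{int} \leq d_{bot}$ and $d_{int} \geq d_{bot}$ separately. Since $\mathbb{V}$ and $\mathbb{W}$ are of finite type, the classical decomposition theorem of Zomorodian--Carlsson gives essentially unique decompositions $\mathbb{V} \cong \bigoplus_{I \in \mathcal{B}(\mathbb{V})} \mathbb{F}(I)$ and $\mathbb{W} \cong \bigoplus_{J \in \mathcal{B}(\mathbb{W})} \mathbb{F}(J)$ into interval modules, with finite multisets of bars.

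For the inequality $d_{int}(\mathbb{V},\mathbb{W}) \leq d_{bot}(\mathcal{B}(\mathbb{V}),\mathcal{B}(\mathbb{W}))$, I would start from any $\Delta$-matching $\mu$ between the barcodes and build an explicit $\Delta$-interleaving summand by summand. For each matched pair $I \leftrightarrow J = \mu(I)$, the relations $I \subset J^{-\Delta}$ and $J \subset I^{-\Delta}$ ensure that the natural scalar maps between $\mathbb{F}(I)$ and $\mathbb{F}(J)[\Delta]$ (identity on the overlap, zero elsewhere) define persistence morphisms. For unmatched bars, whose length must be at most $2\Delta$ by the matching axioms, take the zero morphism. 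Summing these over all summands yields $F$ and $G$; the interleaving identities $G[\Delta] \circ F = \Phi_{\mathbb{V}}^{2\Delta}$ and $F[\Delta] \circ G = \Phi_{\mathbb{W}}^{2\Delta}$ follow componentwise, using the fact that $\Phi^{2\Delta}$ vanishes on any interval module of length at most $2\Delta$.

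The reverse inequality is more delicate. Given a $\Delta$-interleaving $(F,G)$, I would construct an induced matching between the barcodes following the scheme of Bauer--Lesnick. For each bar $I \in \mathcal{B}(\mathbb{V})$ longer than $2\Delta$, inspect how $F$ acts on the corresponding summand, and invoke the rank identity
\begin{equation*}
\mathrm{rank}(\pi_{s,t}^{\mathbb{V}}) = \#\{K \in \mathcal{B}(\mathbb{V}) : s,t \in K\}
\end{equation*}
to single out a unique bar $J \in \mathcal{B}(\mathbb{W})$ that must be matched with $I$. Performing the symmetric construction from $\mathbb{W}$ to $\mathbb{V}$ via $G$ and verifying compatibility yields a bijection $\mu$ covering the long bars on both sides. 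The interleaving relations then translate directly into the containments $I \subset J^{-\Delta}$ and $J \subset I^{-\Delta}$, so that $\mu$ is a $\Delta$-matching.

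The hard part is the reverse direction: extracting a coherent matching from the interleaving morphisms. Because $F$ need not respect the interval decompositions, several bars of $\mathbb{V}$ may be partially routed to several bars of $\mathbb{W}$, and the pairing has to be read off from rank-of-subquotient calculations rather than a direct componentwise identification. The finite type assumption is crucial here: it makes each barcode a finite multiset, so the induced matching can be built inductively on bar lengths, sidestepping the limiting issues that appear when treating the locally finite setting.
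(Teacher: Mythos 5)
First, note that the paper does not prove this statement at all: it is quoted verbatim from the reference [PRSZ-2020], so there is no in-paper proof to compare against; your proposal has to be judged against the standard proofs in the literature (Chazal et al.\ via interpolation, or Bauer--Lesnick via induced matchings, both of which are essentially what PRSZ present). Your treatment of the easy inequality $d_{int}\le d_{bot}$ is correct and is the standard argument: a $\Delta$-matching yields block morphisms that are identity-on-overlap for matched pairs and zero on unmatched bars, and the containments $I\subset J^{-\Delta}$, $J\subset I^{-\Delta}$ together with the vanishing of $\Phi^{2\Delta}$ on bars of length at most $2\Delta$ give the interleaving identities summand by summand.

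The genuine gap is in the reverse inequality, and it is not merely the acknowledged ``hard part'' being deferred: the specific plan you describe would not go through. The rank identity $\mathrm{rank}(\pi^{\mathbb{V}}_{s,t})=\#\{K: s,t\in K\}$ only \emph{counts} bars containing $[s,t]$; it does not canonically single out which bar $J\in\mathcal{B}(\mathbb{W})$ should be matched with a given long bar $I\in\mathcal{B}(\mathbb{V})$, and independent choices made bar by bar can collide, so injectivity of $\mu$ is not guaranteed. Moreover, building one matching from $F$ and a second from $G$ and then ``verifying compatibility'' is not how the Bauer--Lesnick argument works and is in general problematic: the two induced matchings need not be inverse to each other, and forcing compatibility amounts to redoing the whole stability proof. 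The actual induced-matching proof uses the \emph{single} morphism $F:\mathbb{V}\to\mathbb{W}[\Delta]$, factors it through the image persistence module, matches $\mathcal{B}(\mathbb{V})$ and $\mathcal{B}(\mathbb{W})$ through the canonical (endpoint-ordered) injections of $\mathcal{B}(\mathrm{im}\,F)$ into both barcodes, and only then uses the existence of $G$ and the interleaving relations to bound endpoint displacements and to show every bar of length greater than $2\Delta$ is matched; finally one passes from ``$\Delta$-interleaved for every $\Delta>d_{int}$'' to the infimum. If you replace your rank-based pairing and symmetrization step by this image-factorization construction (or by the interpolation argument of Chazal et al.), the proof closes; as written, the key step that produces a well-defined bijection on long bars is missing.
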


\begin{remark}\cite{PRSZ-2020}\label{twodi} Fix $a,b,c,d<\infty$, with $a<b$, $c<d$, and consider $d_{int}(\mathbb{F}(a,b],\mathbb{F}(c,d])$ between the persistence modules $\mathbb{F}(a,b]$ and $\mathbb{F}(c,d])$, then we have
	\begin{align*}
	d_{int}(\mathbb{F}(a,b],\mathbb{F}(c,d]) =&d_{bot}((a,b],(c,d])\\
	=&\min\left(\max(\frac{b-a}{2},\frac{d-c}{2}),\max(|a-c|,|b-d|)\right).
	\end{align*}
\end{remark}

The following theorem is rooted in Azumaya-Krull-Remak-Schmidt Theorem \cite{AKRS-1950}.
\begin{theorem}[\cite{CDGO-2016}]\label{unique}
	Suppose that a persistence module $\mathbb{V}$ over
	$\mathbf{T}\subseteq \mathbf{R}$ can be expressed as a direct sum of interval modules in two different ways,
	$$
	\mathbb{V}=\bigoplus_{l\in L} {\mathbb{I}}^{J_{l}}=\bigoplus_{m\in M} {\mathbb{I}}^{K_{m}}.
	$$
	Then there is a bijection $\sigma\ :\ L\rightarrow M$ such that $J_{l}=K_{\sigma(l)}$ for all $l$.
\end{theorem}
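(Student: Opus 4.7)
The plan is to deduce this uniqueness via an Azumaya--Krull--Remak--Schmidt argument, as hinted in the excerpt itself. The crucial ingredient is that each interval module has a local endomorphism ring, after which the abstract Krull--Schmidt machinery produces the desired bijection of summands.

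First, I would compute $\operatorname{End}(\mathbb{I}^{J})$ for an interval $J$. A persistence endomorphism $\varphi\colon\mathbb{I}^{J}\to\mathbb{I}^{J}$ assigns to each $t\in J$ a scalar $\varphi_{t}\in\mathbb{F}$ (acting on $(\mathbb{I}^{J})_{t}\cong\mathbb{F}$), subject to compatibility with the structure maps $\pi_{s,t}$. Since those structure maps are the identity on $\mathbb{F}$ for all $s\le t$ in $J$, the scalar $\varphi_{t}$ must be independent of $t$. Hence $\operatorname{End}(\mathbb{I}^{J})\cong\mathbb{F}$, which is a field and in particular a local ring whose only idempotents are $0$ and $1$. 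As a bonus, this already shows that each $\mathbb{I}^{J}$ is indecomposable.

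Next, I would invoke Azumaya's theorem, which states that whenever an object $\mathbb{V}$ in an additive category admitting arbitrary direct sums decomposes as a direct sum of subobjects each having a local endomorphism ring, any other such decomposition is obtained from the first via a bijection of summands pairing isomorphic pieces. Applied to the two given decompositions of $\mathbb{V}$, this yields a bijection $\sigma\colon L\to M$ with $\mathbb{I}^{J_{l}}\cong\mathbb{I}^{K_{\sigma(l)}}$ for every $l\in L$. To finish, I would observe that two interval modules are isomorphic precisely when the underlying intervals coincide: the support $\{t\colon (\mathbb{I}^{J})_{t}\neq 0\}=J$ is visibly an isomorphism invariant. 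Therefore $J_{l}=K_{\sigma(l)}$, as required.

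The subtle point, which I expect to be the main obstacle, is that $L$ and $M$ are allowed to be infinite (this is exactly the setting needed for Theorem~A, where there may be infinitely many critical points producing infinitely many bounded bars), so the classical finite Krull--Schmidt theorem is insufficient. One must invoke Azumaya's generalization, and this in turn uses that the ambient category has arbitrary direct sums and that each summand has a local endomorphism ring; both conditions hold in the category of persistence modules because direct sums are taken pointwise. Alternatively, one can avoid citing Azumaya's theorem by constructing $\sigma$ explicitly via a multiplicity count: for each candidate pair $(a,b)$ one shows that the number of bars of shape $(a,b]$ (or $(a,+\infty)$) appearing in a decomposition equals an intrinsic rank-type invariant of $\mathbb{V}$ obtained from the maps $\pi_{s,t}$ with $s$ near $a$ and $t$ near $b$. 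This concrete route is more direct but requires care when the spectrum is not discrete, which is precisely the generality contemplated here.
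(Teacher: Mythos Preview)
Your proposal is correct and follows exactly the approach the paper itself points to: the paper does not give its own proof of this theorem but merely cites it from \cite{CDGO-2016}, noting that it ``is rooted in Azumaya--Krull--Remak--Schmidt Theorem \cite{AKRS-1950}.'' Your sketch---computing $\operatorname{End}(\mathbb{I}^{J})\cong\mathbb{F}$ to verify locality of the endomorphism ring, invoking Azumaya's theorem to obtain the bijection of summands, and then identifying intervals via the support invariant---is precisely the standard argument behind that citation, and your remark about needing Azumaya's infinite version rather than finite Krull--Schmidt is on point for the application to Theorem~A.
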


In the above theorem, $J\subseteq \mathbf{T}$ is an interval and $\mathbb{I}^{J}$  is the same as our interval module $\mathbb{F}(J)$. 

\begin{remark}
	Crawley-Boevey \cite{CB-2015} has recently shown that a persistence module over $\mathbf{R}$ admits
	an interval module decomposition if each $V_{t}$
	is finite-dimensional. Thus, if $\mathbf{T}$ is a locally finite subset  in $\mathbf{R}$, and each $V_{t}$ is finite-dimensional, a persistence module over $\mathbf{T}$ also admits
	an interval module decomposition for all $\mathbf{T}\subseteq\mathbf{R}$.
\end{remark}

Next, we will introduce several important theorems of inner functions which are useful in this paper.

\begin{definition}
	A Blaschke product is an inner function of the form
	\begin{equation*}
		B(z)=\lambda z^{m}\prod_{n}\frac{|\alpha_{n}|}{\alpha_{n}}(\frac{\alpha_{n}-z}{1-\overline{\alpha_{n}}z}),
	\end{equation*}
	where $m$ is a nonnegative integer, $\lambda$ is a complex number with $|\lambda|=1$, and $\{\alpha_{n}\}$ is a sequence of points in $\mathbb{D}\setminus\{0\}$ satisfying the Blaschke condition $\sum_{n}(1-|\alpha_{n}|)<\infty$.
\end{definition}

If for every bounded sequence of complex numbers $\{w_n\}_{n=1}^{\infty}$, there exists $f$ in $H^{\infty}$ satisfying $f(\alpha_n)=w_n$ for every $n\in\mathbb{N}$, then both the sequence $\{\alpha_n\}_{n=1}^{\infty}$ and the Blaschke product $B(z)$ are called interpolating. Following from a celebrated result of Carleson \cite{Car-Len-1958}, one can see
that $B(z)$ is an interpolating Blaschke product if and only if $\{\alpha_{n}\}$ is a uniformly separated sequence, i.e.,
\begin{equation*}
	\inf_{n\in \mathbb{N}}\prod_{k\neq n}{\big \vert}\frac{\alpha_{k}-\alpha_{n}}{1-\overline{\alpha_{k}}\alpha_{n}}{\big \vert}>0.
\end{equation*}
Furthermore, a Blaschke product is called Carleson-Newman if it is a product of finitely many interpolating Blaschke products.

The result proposed by Hoffman \cite{Hoff-1967} (see also \cite{Gar-1981}) plays an important role in the research of interpolating Blaschke products. For convenience, we show the representation of the Hoffman's Lemma by Cima and Motini \cite{CM-2017}.
\begin{lemma}[Hoffman's Lemma, \cite{Hoff-1967}]\label{Hoffman}
	Let $\delta$, $\eta$ and $\epsilon$ be real numbers, called Hoffman constant, satisfying
	 $0 <\delta <1$, $0<\eta < (1-\sqrt{1-\delta^2})/\delta$, (that is, $0<\eta<\rho(\delta, \eta)$) and
	\[
	0<\epsilon<\eta\cdot\frac{\delta-\eta}{1-\delta\eta}.
	\]
	If $B$ is any interpolating Blaschke product with zeros $\{z_n; n \in \mathbb{N}\}$ such that
	\[
	\delta(B)=\inf\limits_{n\in\mathbb{N}} (1-\vert z_n\vert^2)\vert B'(z_n) \vert \geq \delta,
	\]
	then
	\begin{enumerate}
	\item The pseudo-hyperbolic disks $D_{\rho}(a, \eta)$ for $a \in \mathcal{Z}(B)$ are pairwise disjoint.
	\item The following inclusions hold:
	\[
	\{z\in\mathbb{D}; \vert B(z) \vert<\epsilon\} \subseteq \{z\in\mathbb{D}; \rho(z, \mathcal{Z}(B))<\eta\} \subseteq \{z\in\mathbb{D}; \vert B(z) \vert<\eta\}.
	\]
	\end{enumerate}

\end{lemma}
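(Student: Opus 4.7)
My plan is to treat the two claims of the lemma separately, exploiting in both cases the Möbius-invariance of the pseudo-hyperbolic metric and the fact that $d_h:=\tanh^{-1}\rho$ is a genuine metric on $\mathbb{D}$. For part (1), I would first upgrade the hypothesis $\delta(B)\ge\delta$ to the pointwise statement $\rho(z_j,z_k)\ge\delta$ for all distinct zeros $z_j,z_k$: since $\prod_{l\ne j}\rho(z_j,z_l)\ge\delta$ and all remaining factors are at most $1$, each individual factor $\rho(z_j,z_k)$ must already be at least $\delta$. Assuming two disks $D_\rho(z_j,\eta)$ and $D_\rho(z_k,\eta)$ overlap, the triangle inequality for $d_h$ gives $d_h(z_j,z_k)\le 2\tanh^{-1}\eta$, equivalently $\rho(z_j,z_k)\le 2\eta/(1+\eta^2)$. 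A short algebraic check shows that the quadratic inequality $\delta\eta^2-2\eta+\delta>0$ rearranges to $2\eta/(1+\eta^2)<\delta$, and its smaller root is exactly $(1-\sqrt{1-\delta^2})/\delta$, so the hypothesis $\eta<(1-\sqrt{1-\delta^2})/\delta$ produces the required contradiction.

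The second inclusion in (2) is immediate, since $|B(z)|=\prod_k\rho(z,z_k)\le\rho(z,z_n)$ for any particular zero $z_n$, and hence $\rho(z,z_n)<\eta$ already forces $|B(z)|<\eta$. The first inclusion I would prove by contrapositive: assume $\rho(z,z_n)\ge\eta$ for every $n$ and aim to show $|B(z)|\ge\epsilon$. Using Möbius-invariance of $\rho$, of $|B(z)|$, and of the interpolating constant, I compose with the disk automorphism sending $z$ to $0$, so that without loss of generality $z=0$ and $|z_n|\ge\eta$ for every zero $z_n$.

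Now pick a zero $z_0$ attaining the minimum $r:=|z_0|=\min_n|z_n|\ge\eta$, and factor $B=b_{z_0}\cdot g$, where $g$ is the Blaschke product over the remaining zeros. Then $|g(z_0)|=\prod_{k\ne 0}\rho(z_0,z_k)\ge\delta(B)\ge\delta$. Applying Schwarz--Pick to $g:\mathbb{D}\to\overline{\mathbb{D}}$ gives $\rho(g(0),g(z_0))\le\rho(0,z_0)=r$, and, when $r<\delta$, the reverse triangle inequality for $d_h$ yields $|g(0)|\ge(\delta-r)/(1-\delta r)$, whence $|B(0)|\ge r(\delta-r)/(1-\delta r)=:F(r)$. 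A direct calculation shows $F(r)=F(\eta)$ precisely at $r=\eta$ and $r=(\delta-\eta)/(1-\delta\eta)$, with $F$ unimodal and attaining its maximum at $r^{*}=(1-\sqrt{1-\delta^2})/\delta$. Since the hypothesis forces $\eta<r^{*}$, the closed interval $[\eta,(\delta-\eta)/(1-\delta\eta)]$ is non-degenerate, and $F(r)\ge F(\eta)=\eta(\delta-\eta)/(1-\delta\eta)>\epsilon$ whenever $r$ lies in it.

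The main obstacle I anticipate is handling the residual range in which the minimal modulus $r$ lies beyond $(\delta-\eta)/(1-\delta\eta)$, since there the naive Schwarz--Pick estimate $F(r)$ drops below $\epsilon$. My plan is to exploit the interpolating condition more quantitatively: the pairwise separation $\rho(z_j,z_k)\ge\delta$ forces a uniform packing bound on the number of zeros inside any pseudo-hyperbolic ball, so if every zero satisfies $|z_n|>(\delta-\eta)/(1-\delta\eta)$, then $|B(0)|=\prod_n|z_n|$ can be controlled from below either by iterating the Schwarz--Pick argument at another carefully chosen zero or by a Carleson-measure style counting argument for zeros near the critical annulus. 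Packaging this packing estimate into the desired uniform lower bound is, I expect, the technical heart of the proof; once it is in place, combining both regimes delivers $|B(0)|\ge\eta(\delta-\eta)/(1-\delta\eta)>\epsilon$ and hence the first inclusion.
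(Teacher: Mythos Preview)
The paper does not prove this lemma; it is quoted from Hoffman \cite{Hoff-1967} in the formulation of Cima--Mortini \cite{CM-2017}, so there is no in-paper argument to compare against. Your treatment of part~(1) and of the right-hand inclusion in~(2) is correct.

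For the left-hand inclusion in~(2), the gap you flag is real, and the repairs you sketch (packing bounds, iterated Schwarz--Pick, Carleson counting) are not how the result is normally closed; it is not clear how any of them would produce a uniform lower bound on $|B(0)|=\prod_n|z_n|$ once the nearest zero lies beyond $(\delta-\eta)/(1-\delta\eta)$. The standard route sidesteps this range entirely. Apply your Schwarz--Pick estimate \emph{only} on each sphere $\partial D_\rho(z_n,\eta)$: there $\rho(z,z_n)=\eta$ sits in the good interval $[\eta,(\delta-\eta)/(1-\delta\eta)]$, so you get $|B(z)|\ge \eta\,\dfrac{\delta-\eta}{1-\delta\eta}>\epsilon$. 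Now $B$ is zero-free on $U=\mathbb D\setminus\bigcup_n\overline{D_\rho(z_n,\eta)}$, and a minimum-modulus argument on $U$ (using that $|B|>\epsilon$ on the inner boundary spheres and $|B|\to 1$ a.e.\ on $\partial\mathbb D$) forces $|B|\ge\epsilon$ throughout $U$; equivalently, one observes that any component of $\{|B|<\epsilon\}$ contains a zero $z_m$ and cannot cross $\partial D_\rho(z_m,\eta)$, hence lies inside $D_\rho(z_m,\eta)$. Either way, the key idea you are missing is that one never needs a pointwise lower bound on $|B(z)|$ for $z$ at \emph{arbitrary} distance $\ge\eta$ from $\mathcal Z(B)$ --- only at distance exactly $\eta$.
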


Berman \cite{Berman-1984} characterized a class of inner functions whose components of level sets equip with closures in $\mathbb{D}$.

\begin{theorem}[Berman]\label{finitecomponet}
	Let $u$ be an inner function. Then, for every $\epsilon\in(0,1)$, all the
	components of the level sets $\{z\in\mathbb{D}; |u(z)| < \epsilon\}$ have compact closures in $\mathbb{D}$ if and only if
	$u$ is a Blaschke product and
	\[
	\limsup\limits_{r\rightarrow1}|u(r\xi)|=1 \ \ \  \text{for every} \ \  \xi\in\mathbb{T}.
	\]
\end{theorem}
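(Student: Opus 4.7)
The plan is to prove both implications, using the interplay between the boundary behavior of $u$ and the topological structure of its sublevel sets.

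For the forward direction, assume every component of every level set has compact closure in $\mathbb{D}$. We first establish $\limsup_{r\to 1}|u(r\xi)|=1$ for every $\xi\in\mathbb{T}$ by contradiction. If $\limsup_{r\to 1}|u(r\xi_0)|=\epsilon_0<1$ for some $\xi_0$, choose $\epsilon\in(\epsilon_0,1)$; then a tail $\{r\xi_0:r\in[r_0,1)\}$ of the radius lies in the connected set $\{|u|<\epsilon\}$, hence in a single component $\Omega^{*}$, and $\xi_0\in\overline{\Omega^{*}}\cap\partial\mathbb{D}$, violating the compactness hypothesis. Next we rule out a nontrivial singular factor. Factor $u=B\cdot s_\mu$ via the Riesz decomposition into Blaschke and singular inner parts. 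If $\mu\neq 0$, then at $\mu$-almost every $\xi_0$ the symmetric derivative of $\mu$ is infinite, which forces the Poisson integral $-\log|s_\mu(r\xi_0)|$ to blow up; hence $|s_\mu(r\xi_0)|\to 0$ and so $|u(r\xi_0)|\to 0$ radially, contradicting the limsup condition just established. Therefore $\mu=0$ and $u$ is a Blaschke product.

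For the converse direction, assume $u$ is a Blaschke product with $\limsup_{r\to 1}|u(r\xi)|=1$ for every $\xi$, and suppose toward contradiction that some component $\Omega_0$ of $\{|u|<\epsilon\}$ has $\xi_0\in\overline{\Omega_0}\cap\partial\mathbb{D}$. By Lemma \ref{sc-components}, $\Omega_0$ is simply connected, so let $\phi:\mathbb{D}\to\Omega_0$ be a Riemann map. Then $u\circ\phi$ is bounded analytic on $\mathbb{D}$ with $|u\circ\phi|<\epsilon$, and by Fatou its radial limits have modulus at most $\epsilon$ almost everywhere on $\partial\mathbb{D}$. Through the prime-end correspondence, some $\zeta\in\partial\mathbb{D}$ encodes the prime end of $\Omega_0$ at $\xi_0$: along the curve $r\mapsto\phi(r\zeta)$ tending to $\xi_0$, one obtains $u(\phi(r\zeta))\to L$ with $|L|\leq\epsilon$. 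Lindel\"of's theorem applied to $u$ along this curve then yields a non-tangential limit of $u$ at $\xi_0$ equal to $L$; in particular $\lim_{r\to 1}|u(r\xi_0)|=|L|\leq\epsilon<1$, contradicting the limsup hypothesis.

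The main obstacle lies in the last step of the converse: extracting a genuinely accessible path in $\Omega_0$ ending at $\xi_0$ along which $u$ has an \emph{actual} limit, rather than merely a modulus bound, so that classical Lindel\"of applies. I expect this to hinge on carefully exploiting the simple connectedness of $\Omega_0$ from Lemma \ref{sc-components} together with the boundary theory of the Riemann map (prime ends and the Carath\'eodory extension), possibly after passing to a subsequence along $\gamma$ and replacing $\xi_0$ by a nearby accessible boundary point in $\overline{\Omega_0}\cap\partial\mathbb{D}$---since the limsup condition is assumed at \emph{every} point of $\mathbb{T}$, such a replacement is cost-free.
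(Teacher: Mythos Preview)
The paper does not prove this theorem at all; it is quoted from Berman's 1984 article as a background result in the preliminaries, so there is no in-paper proof to compare against. I will therefore just assess the proposal on its own merits.

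Your forward direction is sound. The $\limsup$ argument is clean: if $\limsup_{r\to1}|u(r\xi_0)|=\epsilon_0<1$ then for any $\epsilon\in(\epsilon_0,1)$ a genuine tail $\{r\xi_0:r\ge r_0\}$ of the radius lies in $\{|u|<\epsilon\}$, and that tail is connected, so it sits in one component whose closure then contains $\xi_0$. The singular-factor elimination is also correct: for a nonzero singular measure $\mu$ one has $D\mu(\xi_0)=+\infty$ for $\mu$-a.e.\ $\xi_0$, and a standard lower bound on the Poisson kernel gives $P[\mu](r\xi_0)\ge c\,\mu(I_{1-r})/(1-r)\to\infty$, whence $|s_\mu(r\xi_0)|\to0$ and $|u(r\xi_0)|\to0$ along that radius.

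The gap you flag in the converse is real and is the crux of Berman's argument, not a technicality. From $\xi_0\in\overline{\Omega_0}\cap\partial\mathbb{D}$ alone you get neither an accessible endpoint nor a curve along which $u$ has a limit; Lindel\"of is simply unavailable as stated. Your prime-end sketch can be made to work, but only after you rule out the possibility that the Riemann map $\phi$ has $\phi^*(\zeta)\in\mathbb{D}$ for \emph{almost every} $\zeta\in\partial\mathbb{D}$. In that residual case $|(u\circ\phi)^*|\equiv\epsilon$ a.e., so $v:=(u\circ\phi)/\epsilon$ is itself an inner function on $\mathbb{D}$, and one must extract a contradiction from the coexistence of this inner structure with the fact that $\Omega_0$ actually reaches $\partial\mathbb{D}$. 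That step requires a genuine additional argument (harmonic-measure positivity of the contact set, or the Blaschke structure of $u$ near the contact point); your closing paragraph gestures at the right tools but does not supply it. Until that case is dispatched, the converse remains incomplete.
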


 Nestoridis \cite{Ne80} introduced a family of inner functions $B$, denoted by $H$ (in our language, $H$ is the family of inner functions with Property $\mathfrak{H}$).  For every $B\in H$, $B$ and $zB$ don't belong to the same path-connected component.  
The elements in $H$ are all Blaschke products including all of thin Blashcke products.

\section{Persistence modules induced by inner functions with Property $\mathfrak{B}$ and interval module decompositions}

In this section, we consider the inner functions with property $\mathfrak{B}$. First, we consider the relation between the order of the critical points and the number of connected components merging at those points. And then we decompose the persistence modules induced by Blaschke products with Property $\mathfrak{B}$ into interval modules.

\begin{lemma}\label{keeptr}
	Suppose that $u(z)$ and $v(z)$ are two inner functions and there exists a M\"{o}bius transformation $\varphi(z)$ such that $v(z)=u(\varphi(z))$. Then, $\mathbb{U}\cong \mathbb{V}$.
\end{lemma}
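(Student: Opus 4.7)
The plan is to observe that any M\"obius transformation $\varphi$ such that $u\circ\varphi$ is again an inner function on $\mathbb{D}$ must be a disk automorphism, hence a homeomorphism $\varphi:\mathbb{D}\to\mathbb{D}$. The isomorphism $\mathbb{U}\cong\mathbb{V}$ will then be induced functorially by $\varphi$ on the level of filtrations, and everything else is a diagram chase.

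First I would record the pointwise identity
\[
\Sigma_{v,t}=\{z\in\mathbb{D};|u(\varphi(z))|<\theta(t)\}=\varphi^{-1}(\Sigma_{u,t})
\]
for every $t\in(0,+\infty)$. Because $\varphi$ is a homeomorphism of $\mathbb{D}$, this shows that $\varphi$ restricts to a homeomorphism
\[
\varphi_t:\Sigma_{v,t}\longrightarrow \Sigma_{u,t}
\]
for each parameter $t$. Moreover, since the restrictions are obtained from one and the same map $\varphi$, they are compatible with inclusions: for $0<s\leq t$ the square
\begin{center}
\begin{tikzcd}
\Sigma_{v,s}\arrow[r,hook,"j_{s,t}"]\arrow[d,"\varphi_s"'] & \Sigma_{v,t}\arrow[d,"\varphi_t"]\\
\Sigma_{u,s}\arrow[r,hook,"i_{s,t}"'] & \Sigma_{u,t}
\end{tikzcd}
\end{center}
commutes on the nose (it is just the identity $\varphi\circ j_{s,t}=i_{s,t}\circ\varphi$ on points).

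Next I would apply the singular homology functor $H_0(-;\mathbb{F})$. Since $\varphi_t$ is a homeomorphism, $(\varphi_t)_0:V^{|v|}_t\to V^{|u|}_t$ is an isomorphism of $\mathbb{F}$-vector spaces, and the commutativity above becomes
\[
\pi^{u}_{s,t}\circ(\varphi_s)_0=(\varphi_t)_0\circ\pi^{v}_{s,t}.
\]
By definition of a persistence morphism, the family $\{(\varphi_t)_0\}_{t>0}$ is therefore a morphism $\Phi:\mathbb{V}\to\mathbb{U}$. Running the same argument with $\varphi^{-1}$ in place of $\varphi$ gives a morphism $\Psi:\mathbb{U}\to\mathbb{V}$, and functoriality of $H_0$ forces $\Psi\circ\Phi$ and $\Phi\circ\Psi$ to be the identity morphisms. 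Hence $\mathbb{U}\cong\mathbb{V}$.

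There is no real obstacle here; the only subtle point is the initial remark that a M\"obius self-map of $\mathbb{C}$ with $u\circ\varphi$ inner must preserve $\mathbb{D}$, so one should briefly justify that $\varphi$ is a disk automorphism (equivalently, of the form $\lambda\frac{a-z}{1-\bar a z}$ with $|a|<1$, $|\lambda|=1$) rather than just any linear fractional map. Once that is in place, the rest is the formal observation that a homeomorphism of filtrations induces an isomorphism of persistence modules.
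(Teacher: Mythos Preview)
Your proposal is correct and follows essentially the same approach as the paper's own proof: both arguments verify $\Sigma_{v,t}=\varphi^{-1}(\Sigma_{u,t})$, use that $\varphi$ restricts to a homeomorphism on each sublevel set, and pass to $H_0$ to obtain a morphism of persistence modules that commutes with the structure maps. Your write-up is in fact slightly more careful (you explicitly note that $\varphi$ must be a disk automorphism and construct the inverse via $\varphi^{-1}$), but the underlying idea is identical.
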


\begin{proof}
	The sublevel set
	\begin{align*}
		\Omega_{u,\theta} = \{z\ ;\ |u(z)|<\theta\},
	\end{align*}
	\begin{align*}
		\Omega_{v,\theta}  &= \{z\ ;\ |v(z)|<\theta\}\\
		&=\{z\ ;\ |u(\varphi(z))|<\theta\}\\
		&=\{\varphi^{-1}(z)\ ;\ |u(\varphi(\varphi^{-1}(z)))|<\theta\}\\
		&=\{\varphi^{-1}(z)\ ;\ |u(z)|<\theta\}.
	\end{align*}
	$\varphi|_{\Omega_{v,\theta}}:\Omega_{v,\theta}
	\rightarrow\Omega_{u,\theta}$ is a homeomorphism which induce an isomorphism $\varphi_t:H_{0}(\Sigma_{v,t}; \mathbb{F})\rightarrow H_{0}(\Sigma_{u,t}; \mathbb{F})$, i.e., $V^{|v|}_{t}\cong V^{|u|}_{t}$. Let $\pi_{s,t}$ and $\widetilde{\pi}_{s,t}$ be the linear maps induced by the inclusions $i_{s,t}:\Omega_{v,\theta(s)} \to \Omega_{v,\theta(t)}$ and $\widetilde{i}_{s,t}:\Omega_{u,\theta(s)} \to \Omega_{u,\theta(t)}$, respectively. Then we have the following commutative diagram, 
	\begin{center}
		\begin{tikzcd}
			V_s^{|v|} \arrow{d}{\varphi_{s}} \arrow{r}{\pi_{s,t}} & V_t^{|v|} \arrow{d}{\varphi_{t}}\\
			V_s^{|u|} \arrow{r}{\widetilde{\pi}_{s,t}}         & V_t^{|u|}
		\end{tikzcd}
	\end{center}
	Then we have $\mathbb{U}\cong \mathbb{V}$. And also we have $d_{int}(\mathbb{U},\mathbb{V})=0$.
	
\end{proof}

\begin{lemma}\label{sur}
	Let $B(z)$ be a Blaschke product with Property $\mathfrak{B}$. Given any $\delta\in(0,1)$. Denote by $\mathbb{D}_{\delta}=\{w\in\mathbb{C};~|w|< \delta\}$. Let $\Omega_0$ be a component of $\Omega_{B,\delta} = \{z\in\mathbb{D}; |B(z)| < \delta\}$. Then, $B(z)\mid_{\overline{\Omega_{0}}}:\overline{\Omega_{0}}\rightarrow \overline{\mathbb{D}_{\delta}}$ is an analytic surjection. Moreover, for any $\zeta\in\partial\mathbb{D}_{\delta}$, there is an open neighborhood $V$ of $\zeta$ in the subspace $\overline{\mathbb{D}_{\delta}}$ such that $(B(z)\mid_{\overline{\Omega_{0}}})^{-1}(V)=\bigsqcup\limits_{j=1}^{m}U_j$, where each $U_j$ is an open subset in the subspace $\overline{\Omega_{0}}$ and  $B(z)\mid_{U_j}:U_j\rightarrow V$ is a bijection.
\end{lemma}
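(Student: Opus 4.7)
My plan is to prove the statement in four stages.

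\textbf{Containment and boundary behaviour.} By Property $\mathfrak{B}$, the closure $\overline{\Omega_0}$ is a compact subset of $\mathbb{D}$, so $B$ is continuous on $\overline{\Omega_0}$, and the strict inequality $|B|<\delta$ on $\Omega_0$ passes by continuity to $|B|\leq\delta$ on the closure, giving $B(\overline{\Omega_0})\subseteq\overline{\mathbb{D}_\delta}$. Moreover, any $z\in\partial\Omega_0\subset\mathbb{D}$ must satisfy $|B(z)|=\delta$: otherwise $|B(z)|<\delta$ would produce a neighborhood of $z$ inside the open set $\Omega_{B,\delta}$ meeting $\Omega_0$, forcing $z\in\Omega_0$ by maximality of the component, a contradiction.

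\textbf{Surjectivity.} Lemma~\ref{sc-components} gives $\inf_{\Omega_0}|B|=0$, and compactness of $\overline{\Omega_0}$ produces a point where $B=0$, which by the previous paragraph lies in $\Omega_0$; hence $0\in B(\Omega_0)$, and by the open mapping theorem $B(\Omega_0)$ is a non-empty open subset of $\mathbb{D}_\delta$. To see it is also closed in $\mathbb{D}_\delta$, take any sequence $B(z_n)\to w\in\mathbb{D}_\delta$; compactness of $\overline{\Omega_0}$ yields a subsequential limit $z^\ast\in\overline{\Omega_0}$ with $|B(z^\ast)|=|w|<\delta$, so by the boundary identification $z^\ast\in\Omega_0$ and $w\in B(\Omega_0)$. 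Connectedness of $\mathbb{D}_\delta$ then forces $B(\Omega_0)=\mathbb{D}_\delta$, and compactness of $B(\overline{\Omega_0})$ promotes this to $B(\overline{\Omega_0})=\overline{\mathbb{D}_\delta}$.

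\textbf{Local bijections near $\zeta\in\partial\mathbb{D}_\delta$.} Since $B-\zeta$ is non-constant holomorphic on $\mathbb{D}$, its zeros are isolated, and the intersection with compact $\overline{\Omega_0}$ is finite, say $(B|_{\overline{\Omega_0}})^{-1}(\zeta)=\{w_1,\dots,w_m\}\subset\partial\Omega_0$. Around each $w_j$, the local normal form for holomorphic functions produces a small disk $D_j=D(w_j,r_j)$ (the $D_j$ chosen pairwise disjoint) and a biholomorphism $\phi_j$ on $D_j$ with $\phi_j(w_j)=0$ and $B(z)=\zeta+\phi_j(z)^{k_j}$. In the coordinate $\eta=\phi_j(z)$, the set $\{|\zeta+\eta^{k_j}|\leq\delta\}$ near $\eta=0$ consists of $k_j$ closed ``petals'' touching $0$, each mapped homeomorphically by $\eta\mapsto\zeta+\eta^{k_j}$ to a closed neighborhood of $\zeta$ in $\overline{\mathbb{D}_\delta}$. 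I would argue that exactly one of these petals has its interior in $\Omega_0$: for $\theta<\delta$ slightly below the critical level, the structure of $\Omega_{B,\theta}$ near $w_j$ is already a disjoint union of $k_j$ topological disks, one around each local minimum of $|B|$, sitting in $k_j$ distinct global components, and no merging happens as $\theta$ rises to $\delta$ because $w_j\notin\Omega_{B,\delta}$, so the $k_j$ petals at level $\delta$ likewise belong to $k_j$ distinct components of $\Omega_{B,\delta}$. Consequently $\overline{\Omega_0}\cap D_j$ is a single closed petal, and $B$ is a homeomorphism from it onto a closed neighborhood of $\zeta$ in $\overline{\mathbb{D}_\delta}$. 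To finish, choose $V$ small enough that $|B-\zeta|$ stays positive on the compact set $\overline{\Omega_0}\setminus\bigcup_j D_j$; this forces $(B|_{\overline{\Omega_0}})^{-1}(V)\subseteq\bigsqcup_j D_j$, and setting $U_j:=\overline{\Omega_0}\cap D_j\cap B^{-1}(V)$ yields the desired partition with each $B|_{U_j}:U_j\to V$ a bijection.

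\textbf{Main obstacle.} The delicate point is the single-petal claim at a critical preimage $w_j$: I need to rule out a globally-induced coalescence of multiple local petals into a single component of $\Omega_{B,\delta}$. This requires tracking the merging behavior of the filtration $\{\Omega_{B,\theta}\}$ across the critical level $\theta=\delta$ and using that the $k_j$ petals only acquire a common interior point (namely $w_j$) when $\theta$ strictly exceeds $\delta$, which in turn entails some control of the critical values of $B$ just below $\delta$; Property $\mathfrak{B}$ together with the Blaschke product hypothesis should keep the geometry sufficiently well-behaved to complete this step.
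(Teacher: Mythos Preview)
Your containment and surjectivity arguments are correct and in fact more carefully argued than the paper's brief treatment. The gap is exactly where you flag it: the single-petal claim. Your sketch does not close it, because two local petals at a critical preimage $w_j$ could already lie in the same global component of $\Omega_{B,\theta}$ for $\theta<\delta$ via merging at \emph{other} critical points with critical values below $\delta$; the fact that $w_j$ itself enters the sublevel set only for $\theta>\delta$ does not rule this out. (Nor do the $k_j$ local pieces need to contain local minima of $|B|$, as you assert in passing.)

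The paper sidesteps the petal analysis entirely. It introduces the mapping degree $m$ of the proper holomorphic map $B|_{\Omega_0}:\Omega_0\to\mathbb{D}_\delta$ and uses that $\partial\Omega_0$ is a Jordan curve (from the simple connectedness in Lemma~\ref{sc-components} together with Property~$\mathfrak{B}$), on which $B$ restricts to an orientation-preserving map onto the circle $\partial\mathbb{D}_\delta$. This boundary map is then a degree-$m$ map of circles, so every $\zeta\in\partial\mathbb{D}_\delta$ has exactly $m$ distinct preimages $\lambda_1,\dots,\lambda_m$ on $\partial\Omega_0$; choosing $V$ small enough and using that every value in $\overline{\mathbb{D}_\delta}$ has exactly $m$ preimages in $\overline{\Omega_0}$ gives the disjoint neighbourhoods $U_j$ and the bijections immediately.

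The Jordan-curve observation is also what dissolves your obstacle within your own framework: if two of the $k_j$ local petals at $w_j$ both lay in $\Omega_0$, then $\partial\Omega_0$ would have to traverse $w_j$ twice, contradicting simplicity of the curve. So the missing ingredient in your argument is precisely that $\partial\Omega_0$ is a Jordan curve; once you have it, the single-petal claim is immediate and the filtration tracking becomes unnecessary.
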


\begin{proof}
	Since $\overline{\Omega_{0}}\subseteq\mathbb{D}$, $B(z)$ is analytic on $\overline{\Omega_{0}}$.  $\Omega_{0}$ is a component of the level set $\Omega_{B,\delta}$, so $B(z)\mid_{\overline{\Omega_{0}}}:\overline{\Omega_{0}}\rightarrow \overline{\mathbb{D}_{\delta}}$ is an analytic surjection, $B(z)$ maps $\partial\Omega_{0}$ to $\partial{D_{\delta}}$,  and   consequently $B(z):\partial\Omega_{0}\rightarrow\partial{\mathbb{D}_{\delta}}$ is an analytic surjection. We write
	\[
	m=\deg(B\mid_{\overline{\Omega_{0}}})\triangleq \frac{1}{2\pi\mathbf{i}}\int_{\partial\Omega_{0}}\frac{d\xi}{\xi-\alpha}, \ \ \ \text{for any} \ \ \alpha\in\Omega_0.
	\]
	
	Given any $\zeta\in\partial\mathbb{D}_{\delta}$. Since $B(z)$ is an orientation-preserving map from the Jordan curve $\partial\Omega_{0}$ onto $\partial{D_{\delta}}$, there are exactly $m$ distinct points $\lambda_1, \cdots, \lambda_m$ on $\partial\Omega_{0}$ such that $B(\lambda_j)=\zeta$ for $j=1,\cdots, m$. Furthermore, we could choose sufficient small open neighborhood $V$ of $\zeta$ in the subspace $\overline{\mathbb{D}_{\delta}}$ such that $(B(z)\mid_{\overline{\Omega_{0}}})^{-1}(V)=\bigsqcup\limits_{j=1}^{m}U_j$, where each $U_j$ is an open neighborhood of $\lambda_j$ in the subspace $\overline{\Omega_{0}}$. Notice each point in $\overline{\mathbb{D}_{\delta}}$ has exactly $m$ preimages of $B(z)$ in $\overline{\Omega_{0}}$ (See Figure \ref{fig:degree_m}). Therefore, $B(z)\mid_{U_j}:U_j\rightarrow V$ is a bijection, for $j=1,\cdots, m$.
\end{proof}

\begin{figure}[h]
	\centering
	\includegraphics[width=1\textwidth]{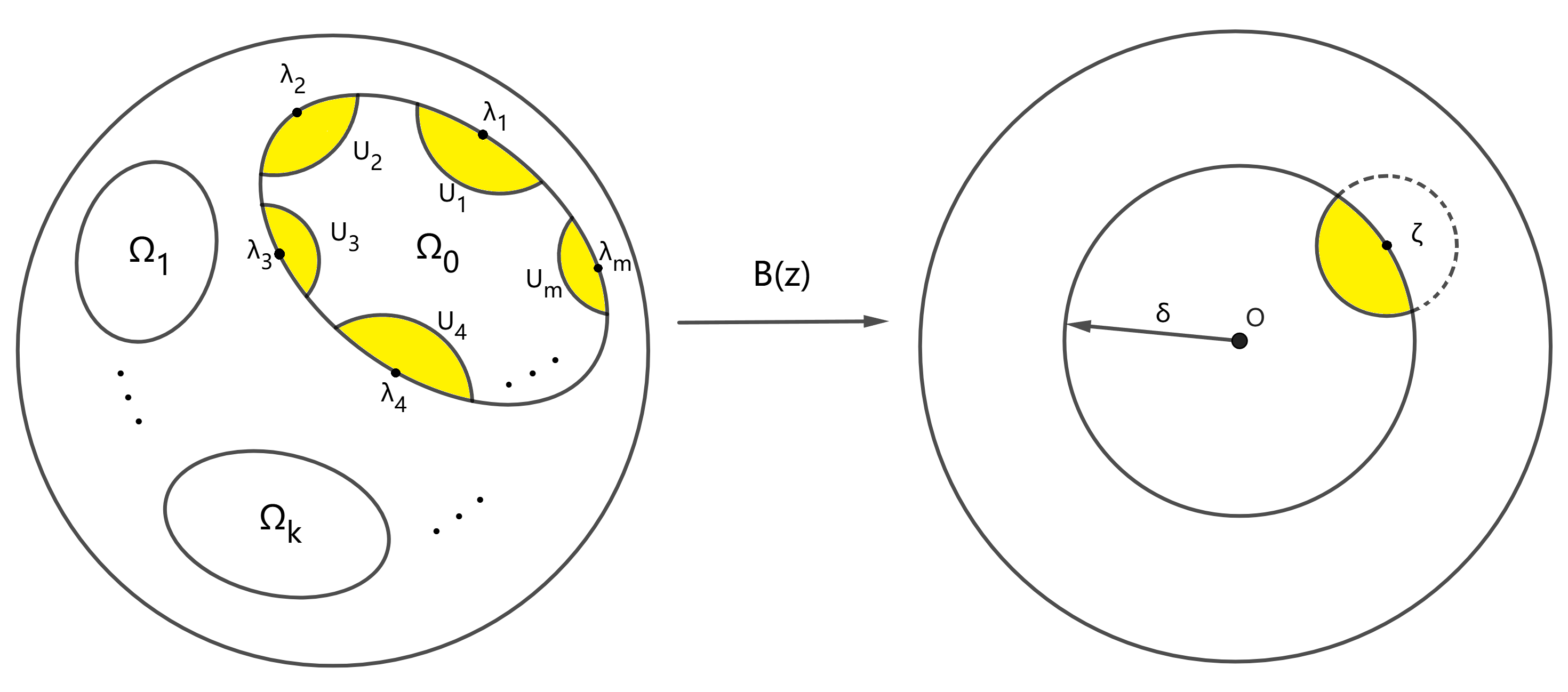}
	\caption{}
	\label{fig:degree_m}
\end{figure}

 The following lemma is a classical result about local behavior at critical points, see \cite[Theorem 11]{Ahlf-1978} for instance.

\begin{lemma}\label{zero}
	Let $f(z)$ be an analytic function at $z=z_{0}$, and $w_{0}=f(z_{0})$. Suppose that $f(z)-w_{0}$ has zero of order $n$ ($n\geq 1$) at $z_{0}$, i.e., $f'(z_{0})=f''(z_{0})=\cdots=f^{(n-1)}(z_{0})=0$ and $f^{(n)}(z_{0})\neq 0$. Then, for a sufficiently small positive number $\rho$, there exits a positive number $\mu$ such that when $0<|w-w_{0}|<\mu$, $f(z)-w$ has $n$ numbers of zeros of order $1$ in $0<|z-z_{0}|<\rho$.
\end{lemma}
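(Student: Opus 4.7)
The plan is to prove this by an application of Rouché's theorem together with the fact that $f'$ is nonvanishing in a punctured neighborhood of $z_0$. First I would choose $\rho > 0$ small enough that the closed disk $\overline{D(z_0, \rho)} = \{z;\, |z - z_0| \leq \rho\}$ lies in the domain of analyticity of $f$, that $z_0$ is the unique zero of $f(z) - w_0$ in this disk, and that $f'(z) \neq 0$ for all $z$ with $0 < |z - z_0| \leq \rho$. Such a $\rho$ exists because the zero sets of the analytic functions $f - w_0$ and $f'$ are both discrete: the former since $f$ is nonconstant on a neighborhood of $z_0$, and the latter since $f'$ has a zero of order exactly $n - 1 \geq 0$ at $z_0$ by hypothesis, hence is not identically zero there.

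Next, set
\[
\mu = \min_{|z - z_0| = \rho} |f(z) - w_0| > 0.
\]
For any $w$ with $0 < |w - w_0| < \mu$, on the circle $|z - z_0| = \rho$ we have $|w - w_0| < |f(z) - w_0|$, so Rouché's theorem applied to $f(z) - w_0$ and its perturbation $f(z) - w = (f(z) - w_0) - (w - w_0)$ yields that $f(z) - w$ has the same number of zeros in $D(z_0, \rho)$ as $f(z) - w_0$, namely $n$ counted with multiplicity. Because $w \neq w_0$ we have $f(z_0) - w \neq 0$, so $z_0$ is not among these zeros; and because $f'$ is nonzero on the punctured disk $0 < |z - z_0| \leq \rho$, every zero of $f(z) - w$ lying there has multiplicity one. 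Consequently, the $n$ zeros must be $n$ distinct simple points situated in $0 < |z - z_0| < \rho$, which is exactly the desired conclusion.

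There is no substantive obstacle; the argument is the classical one reproduced in Ahlfors. The only point requiring a modicum of care is the simultaneous choice of $\rho$ isolating $z_0$ both as the preimage of $w_0$ and as the sole critical point of $f$ in the closed disk, but this is immediate from the isolation of zeros of analytic functions. A more geometric alternative would be to use the canonical form: factor $f(z) - w_0 = [(z - z_0) h(z)]^n$ with $h$ analytic near $z_0$ and $h(z_0) \neq 0$, apply the inverse function theorem to $\zeta(z) = (z - z_0) h(z)$ to obtain a local biholomorphism, and observe that in the new coordinate the equation $f(z) = w$ becomes $\zeta^n = w - w_0$, which manifestly has $n$ distinct simple roots whenever $w \neq w_0$.
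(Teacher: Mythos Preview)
Your argument is correct and is precisely the classical Rouch\'{e}-theorem proof found in Ahlfors. The paper itself does not supply a proof of this lemma but simply cites \cite[Theorem 11]{Ahlf-1978}, so your write-up in fact fills in exactly the argument the authors defer to.
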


Then we can get the relation between the order of a critical point and the number of components merging at the critical point. We say two domains $\Omega_1$ and $\Omega_2$ merge at $\omega$, if $\Omega_1\cap \Omega_2=\emptyset$ and $\{\omega\} \in \overline{\Omega_1}\cap \overline{\Omega_2}$.

\begin{theorem}\label{cri_and_com}
Suppose that $B(z)$ is a Blaschke product with Property $\mathfrak{B}$. Let $z_0\in\mathbb{D}$, then $z_0$ must be a common point of the boundaries of exactly $m$ (possibly infinitely many) distinct components of the level set $\{z\in\mathbb{D}; |B(z)| < |B(z_0)|\}$. When $z_0$ is a critical point of order $n-1$ for $B(z)$, i.e., $B^{(k)}(z_{0})=0$, $k=1,\ldots,n-1$, and $B^{(n)}(z_{0})\neq 0$, we have $n=m$.
\end{theorem}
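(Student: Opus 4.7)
The plan is to reduce the global statement to a local computation at $z_0$ using the local normal form of an analytic map at a critical point, and then to promote the local count to a global count via the maximum modulus principle together with Property $\mathfrak{B}$. Writing $w_0=B(z_0)$ (which is nonzero, since otherwise the level set $\{|B|<|w_0|\}$ is empty), Lemma \ref{zero} yields a small disk $D(z_0,\rho)$ and a biholomorphism $g$ on it with $g(z_0)=0$ such that $B(z)=w_0+g(z)^n$. Passing to the coordinate $\zeta=g(z)$, the inequality $|w_0+\zeta^n|<|w_0|$ reduces (for $\zeta$ small) to $\mathrm{Re}(\overline{w_0}\zeta^n)<0$, which cuts the punctured neighborhood of $\zeta=0$ into exactly $n$ open sectors of angular width $\pi/n$ arranged cyclically and alternating with $n$ ``large'' sectors where $|B|>|w_0|$. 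Consequently $\{|B|<|w_0|\}\cap D(z_0,\rho)$ has exactly $n$ connected components, each having $z_0$ on its boundary.

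Next I would show that these $n$ local sectors belong to $n$ distinct global components of $\{|B|<|w_0|\}$. Suppose, for contradiction, two local sectors $\tau_a\neq\tau_b$ both lie in a single global component $\Omega$. Choose $p_a\in\tau_a$, $p_b\in\tau_b$ near $z_0$, a simple arc $\gamma\subset\Omega$ from $p_a$ to $p_b$, and short arcs $\alpha\subset\overline{\tau_a}$, $\beta\subset\overline{\tau_b}$ joining $p_a,p_b$ to $z_0$. The concatenation $\widetilde{\gamma}$ of $\alpha$, $\beta$, and $\gamma^{-1}$, arranged to be a Jordan curve as discussed below, is contained in $\overline{\Omega}$, which by Property $\mathfrak{B}$ is compactly contained in $\mathbb{D}$. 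Since $\mathbb{D}^{c}$ is connected and unbounded, the bounded component $R$ of $\mathbb{C}\setminus\widetilde{\gamma}$ lies entirely inside $\mathbb{D}$. Locally at $z_0$, the two arcs $\alpha,\beta$ of $\widetilde{\gamma}$ exit into distinct small sectors $\tau_a,\tau_b$, so each of the two ``sides'' of $\widetilde{\gamma}$ near $z_0$ contains at least one large sector (since small and large sectors strictly alternate around $z_0$); in particular $R$ contains points where $|B|>|w_0|$. On the other hand, on $\partial R=\widetilde{\gamma}$ we have $|B|\le|w_0|$, with equality only at $z_0$. The maximum modulus principle applied to $B$ on the compact set $\overline{R}\subset\mathbb{D}$ then forces $|B|\le|w_0|$ throughout $R$, a contradiction. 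Hence $\tau_a=\tau_b$, and combining with the first step gives exactly $n$ global components whose boundaries contain $z_0$, so $m=n$.

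The main obstacle I anticipate is the routine but slightly fiddly bookkeeping needed to realize $\widetilde{\gamma}$ as a genuine Jordan curve. The simple arc $\gamma\subset\Omega$ exists because $\Omega$ is an arc-connected open subset of $\mathbb{C}$, but $\gamma$ could re-enter a small neighborhood of $z_0$ and intersect $\alpha$ or $\beta$. I would handle this by first fixing $r>0$ small enough for the local sector decomposition to hold on $D(z_0,r)$, taking $p_a,p_b$ on $\partial D(z_0,r)$ inside the arcs $\tau_a\cap\partial D(z_0,r)$ and $\tau_b\cap\partial D(z_0,r)$, taking $\alpha,\beta$ inside $\overline{D(z_0,r)}$, and then replacing each excursion of $\gamma$ back into $D(z_0,r)$ (which, by local connectivity, must enter and exit through the same local sector) by an arc along $\partial D(z_0,r)$ lying in that sector. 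This yields a simple arc in $\Omega\setminus D(z_0,r)$ joining $p_a$ to $p_b$, and together with $\alpha,\beta$ produces the desired Jordan curve; the maximum principle then closes the argument.
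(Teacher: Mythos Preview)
Your argument is correct, and the Jordan-curve step you flag as ``fiddly'' can indeed be carried out as you describe: since the local sectors are the connected components of $\{|B|<|w_0|\}\cap D(z_0,r)$, any excursion of $\gamma\subset\Omega$ into $D(z_0,r)$ stays in a single sector and may be pushed to $\partial D(z_0,r)$.

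Your route, however, is different from the paper's. The paper proves the theorem by \emph{counting preimages} rather than by a Jordan-curve argument. It first invokes Lemma~\ref{zero} exactly as you do, to obtain $n$ simple preimages of a value $w$ close to $w_0$ inside a small punctured disk about $z_0$. It then appeals to Lemma~\ref{sur}: for each component $\Omega_i$ of $\{|B|<|w_0|\}$ (compactly contained in $\mathbb{D}$ by Property~$\mathfrak{B}$), the restriction $B|_{\overline{\Omega_i}}\!:\overline{\Omega_i}\to\overline{\mathbb{D}_{|w_0|}}$ is a finite branched cover, and near the boundary value $w_0$ the preimage in $\overline{\Omega_i}$ splits into $\deg(B|_{\overline{\Omega_i}})$ disjoint pieces each mapped bijectively. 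In particular, the piece at $z_0$ contributes exactly one preimage of $w$ near $z_0$ from each $\Omega_i$ whose boundary meets $z_0$; matching this against the $n$ preimages from Lemma~\ref{zero} gives $m=n$ in one line.

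So the paper trades your topological separation argument for the covering/degree structure encoded in Lemma~\ref{sur}. Your approach has the advantage of being self-contained (it does not need Lemma~\ref{sur} or the fact that $\partial\Omega_i$ is a Jordan curve), and in fact it anticipates the exact maximum-modulus-on-a-contour technique the paper uses later, in the proofs of Facts~4 and~5, to show that merging components cannot form cycles. The paper's approach is shorter at this point because the work has been front-loaded into Lemma~\ref{sur}.
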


\begin{proof}
	Consider $z_0$ and level set $\Omega_{B,|B(z_0)|}$. Then by Lemma \ref{zero}, for a sufficiently small positive number $\rho$, there exists a positive number $\mu$ such that when $0<|w-B(z_{0})|<\mu$, $B(z)-w$ has $n$ numbers of zeros of order $1$ in $0<|z-z_{0}|<\rho$. Let $\Omega_{1}, \Omega_{2}, \ldots, \Omega_{m}$ be the distinct components with $z_0$ as a common point in their boundaries. Then, by Lemma \ref{sur}, in a small neighborhood of $z_0$, there is only one preimage of $w$ in each $\Omega_{i}$ (See Figure \ref{fig:neibor}), $i=1,\cdots,m$, which implies $m=n$.

	\begin{figure}[h]
		\centering
		\includegraphics[width=1\textwidth]{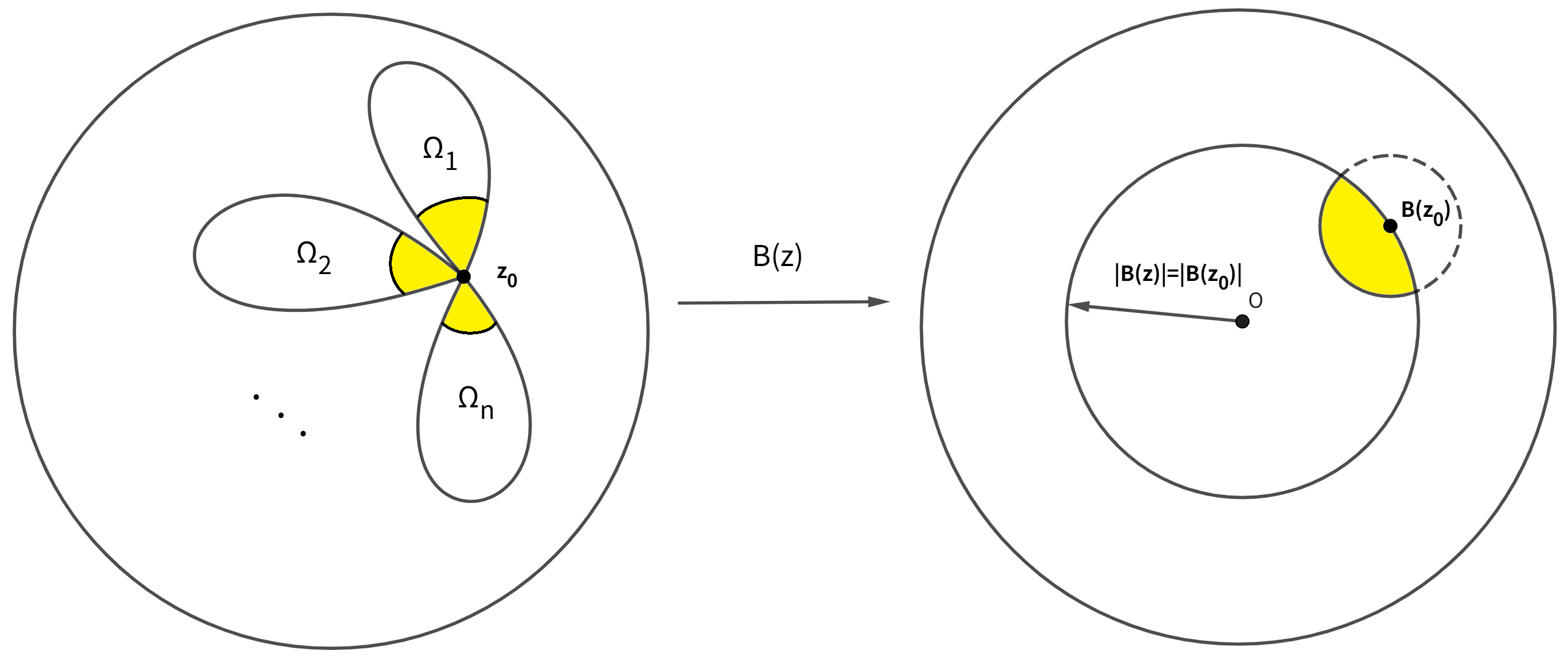}
		\caption{}
		\label{fig:neibor}
	\end{figure}
	
\end{proof}

Before we prove the Theorem A, we will give some definitions and the following six facts for Blashcke products $B$ with property $\mathfrak{B}$.

{\bf Fact 1} For each zero point $\alpha_k$, there exists $\theta_{k}\in (0,1)$ such that the component of $\Omega_{B,\theta_k}$ containing $\alpha_k$ has no other zero points. 
\begin{proof}
	Since $B$ is a Blaschke product with property $\mathfrak{B}$, we have the closure of $\Omega_{B,\theta_k}$ is in the unit disk. We claim  there  exist only finitely many zero points for each component of $\Omega_{B,\theta_k}$. Since otherwise, by the isolation of zero points of analytic functions, $B$ is identically zero which  conflicts with the definition of $B$. Let $\{D_1,\cdots,D_n; D_i\subset \Omega_0, i=1,\cdots,n\}$ be the set of disjoint open disks such that each of them contains only one zero point in $\Omega_0$ with $\alpha_k \in D_1$. By the minimal module principle, we have $\inf_{z\in \Omega_0\setminus{\cup^{n}_{i} D_i}}|B(z)|=\theta_0$ for some $\theta_{0}>0$ and $\inf_{z\in \partial D_1}|B(z)|\geq \theta_0$. For any $0<\theta_k<\theta_0$, we have the component of $\Omega_{B,\theta_k}$ containing $\alpha_k$ is in $D_1$. Since $D_1$ only contains $\alpha_k$, we can get the conclusion.
\end{proof}
{\bf Fact 2} A component of $\Omega_{B,\theta}$ can only merge with finitely many components of $\Omega_{B,\theta}$. Also, at any time, one component of $\Omega_{B,\theta}$ has at most finitely many zeros.
\begin{proof}
Suppose that a component $\Omega_0$ of $\Omega_{B,\theta}$ merge with infinitely many components $\Omega_1,\cdots,\Omega_n,\cdots$.   For $\theta'>\theta$, there is a component $\Omega\supseteq \cup_{j=0}^{+\infty}\Omega_j$ of $\Omega_{B,\theta'}$. By Lemma \ref{sc-components}, every component of $\Omega_{B,\theta'}$ contains at least one zero of the Blaschke product $B$, then the component $\Omega$  has infinitely many zero points, which contradicts with the isolation  of zero points of analytic functions. 
\end{proof}

 Let $A$ be  the ordered set of the zero points $\{\alpha_{k}\}$ of $B$ with $\alpha_i < \alpha_j$ when $i<j$. At time $t$,  the set of zero points contained in  one component $\Omega_t$ of $\Sigma_{B,t}$  has an order induced from $A$, and we call the smallest element among them $\alpha[\Omega_t]$. We also denote the component containing a zero point $\alpha$ by $\Omega_t(\alpha)$. 
 
{\bf Fact 3} At any time $t_0$, the component $\Omega_{t_0}$ containing $\alpha_k$ will finally merges with the component containing $\alpha_0$. Along the way, $\alpha[\Omega_t]$ changes finitely many times for $t>t_0$. 
\begin{proof}
There exists  $r<1$ such that $\alpha_k$ and $\alpha_0$ in  $U=\{z\in \mathbb{D};|z|<r\}$.  Let $\theta = \sup_{z\in \bar{U}}|B(z)|$,  by the maximum modulus principle, $|B(z)|< \theta$ for $z\in U$, we have $U\subset \Omega_{B,\theta}$. Since $U$ is connected, there must be a component of $\Omega_{B,\theta}$ containing $\alpha_0$ and $\alpha_k$.  Then we have the component $\Omega$ of $\Omega_{B,\theta}$ containing $\alpha_k$  also contains $\alpha_0$. By {\bf Fact 2}, $\Omega$ has only finitely many zero points, which implies $\alpha[\Omega_t]$ changes finitely many times.
\end{proof}

{\bf Fact 4} For two components  $\Omega_1$ and $\Omega_2$ of $\Omega_{B,
	\theta}$ merging at $\omega$, we have $\Omega_1\cap \Omega_2=\emptyset$ and $\{\omega\} = \overline{\Omega_1}\cap \overline{\Omega_2}$.
	\begin{proof}
	 Obviously, $\Omega_1 \cap \Omega_2=\emptyset$. Since the critical points are isolated, and each $\omega\in  \overline{\Omega_1}\cap \overline{\Omega_2}$ is a  critical point, so $\overline{\Omega_1}\cap \overline{\Omega_2}$ only contain finitely many points. Now we can choose two adjacent   critical points $\omega_1$, $\omega_2 \in\overline{\Omega_1}\cap \overline{\Omega_2}$. They separate each $\partial\Omega_i$ into two arcs, and we choose one arc from each $\partial\Omega_i$ such that they form a contour $\Gamma$ and the region $\Omega$ enclosed by it has no intersection with $\Omega_i$ (for example see Figure \ref{fig:2_compo_omega}). For any point $z_0\in \Omega$, there exists $\epsilon>0$ such that $|B(z_0)|>\theta+2\epsilon$. Also, $\sup_{z\in \Gamma}|B(z_0)|<\theta+\epsilon$. By the maximum modulus principle, $|B(z_0)|<\theta+\epsilon$ for $z_0\in\Omega$, which makes a contradiction.
	
\begin{figure}[h]
	\centering
	\includegraphics[width=0.8\textwidth]{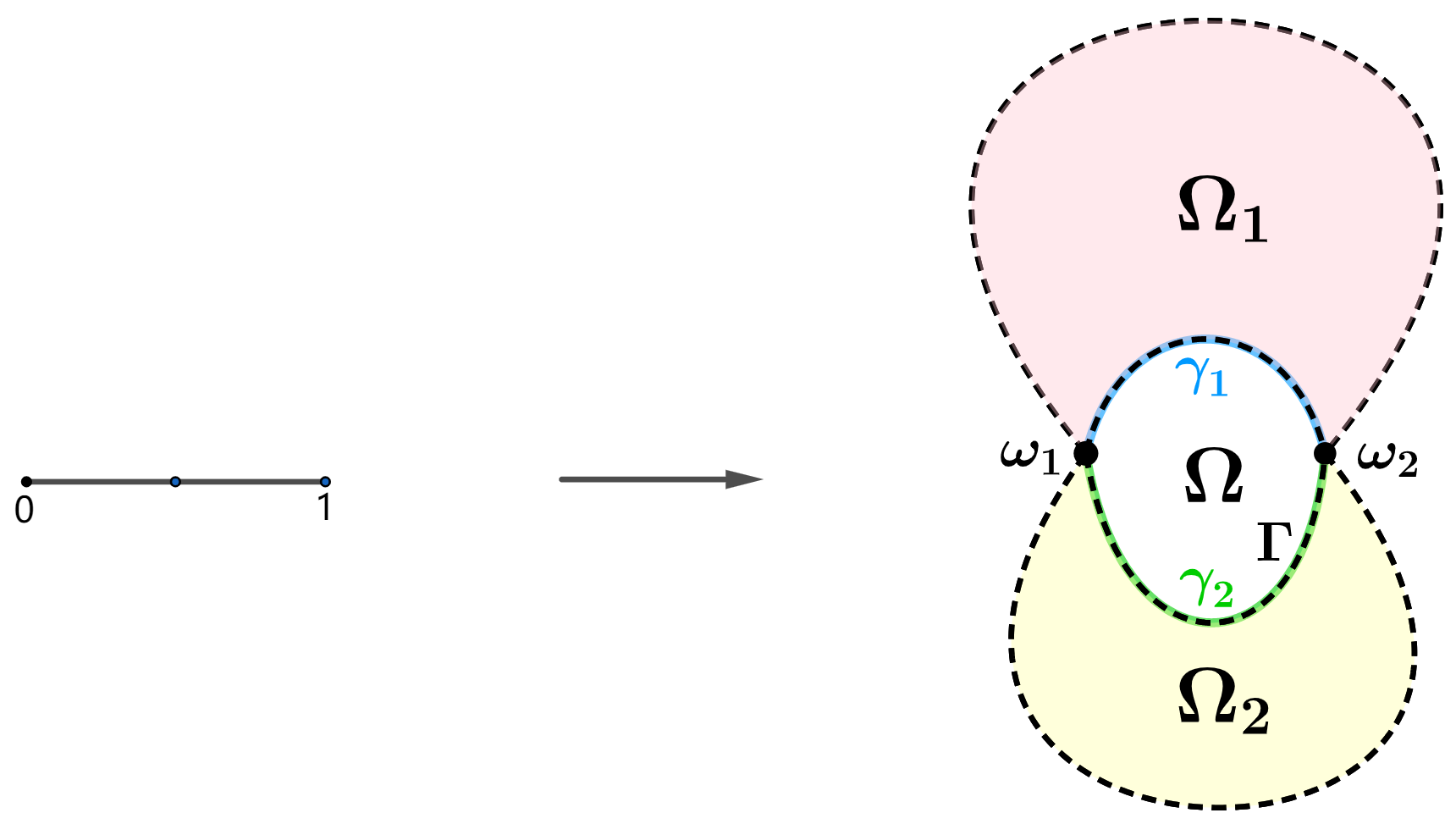}
	\caption{}
	\label{fig:2_compo_omega}
\end{figure}
\end{proof}

Let $W $ be the set of critical points $\{\omega_1,\omega_2,\cdots\}$. We define an equivalence relation on $W$ by $\omega_i\sim \omega_j$ if $|B(\omega_i)|=\theta=|B(\omega_j)|$ and the components sharing the critical point $\omega_i$ and those sharing the critical point $\omega_j$ are merging together right after  time $\theta$. We give an ordering of the equivalence classes and write the set of the equivalence classes $P=\{p_1, p_2, p_3,\cdots\}$. By {\bf Fact 2}, each $p_j=\{\omega_{j_1},\cdots,\omega_{j_{n_j}}\}$ is a finite set. Following the similar idea in {\bf Fact 4}, we can prove the following fact when more than two components merge together.

{\bf Fact 5} For each $p_j$, there do not exist simple closed curves $\Gamma$ (as shown in Figure \ref{fig:n_compo_omega}) on the boundaries of the merging components of $\Omega_{B,\theta}$ such that $\Gamma(0)=\omega_{j_{n_1}}$, $\Gamma(\frac{1}{n})=\omega_{j_{n_2}}$,$\cdots$, $\Gamma(\frac{n-1}{n})=\omega_{j_{n_{n}}}$ and $\Gamma(1)=\omega_{j_{n_1}}$ for any $\omega_{j_{n_k}}\in p_j$, $k=1,\cdots,n$.
\begin{figure}[h]
	\centering
	\includegraphics[width=1\textwidth]{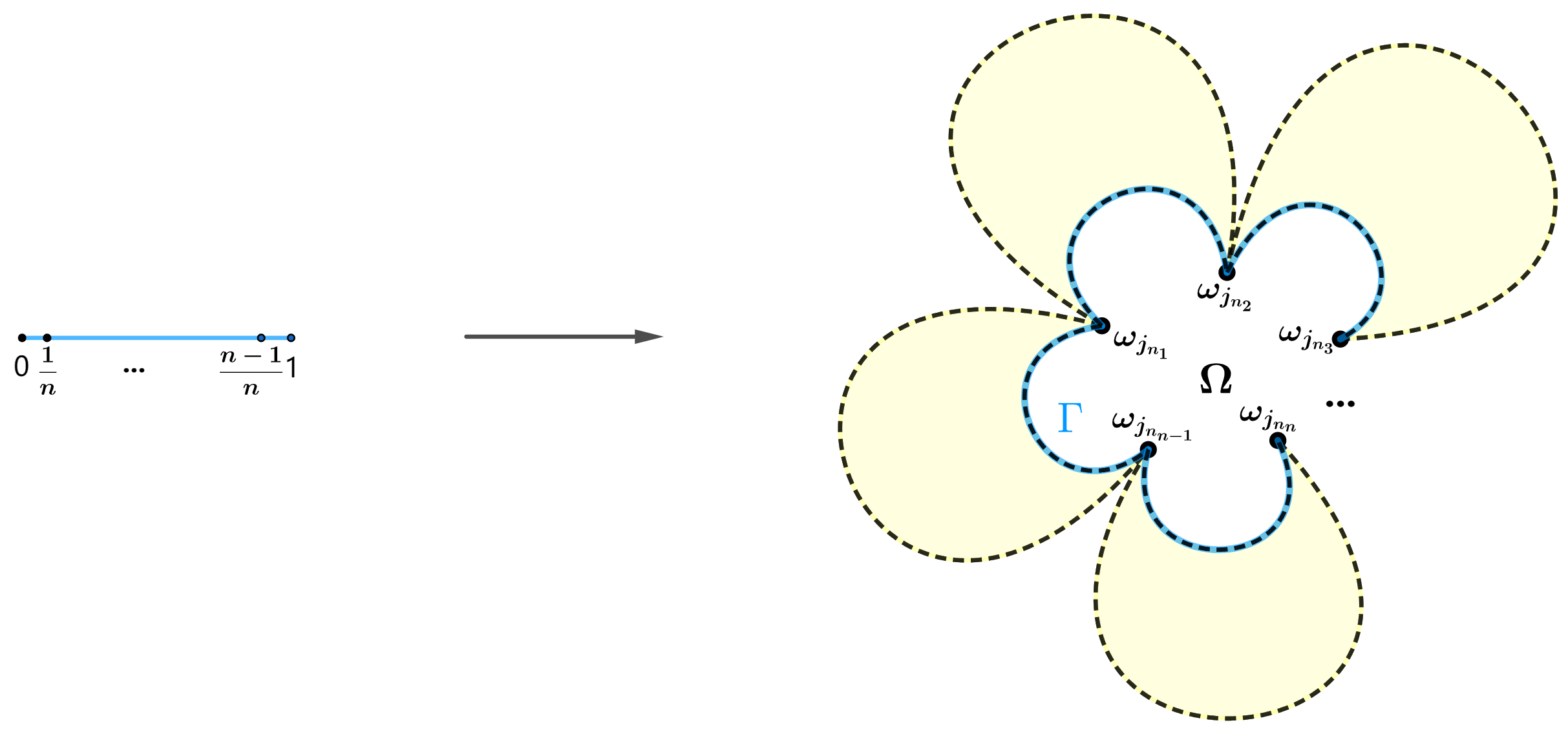}
	\caption{}
	\label{fig:n_compo_omega}
\end{figure}

{\bf Fact 6} Let $M_j+1$ be the number of the components  merging at the critical points of $p_j=\{\omega_{j_1},\cdots,\omega_{j_{n_j}}\}$ with order $m_{j_1},\cdots,m_{j_{n_j}}$ respectively. 
 We have $M_j=\sum_{k=1}^{n_j} m_{j_k}$, $k=1,\cdots,n_j$. 
\begin{proof}
	By  Theorem \ref{cri_and_com}, there are $m_{j_k}+1$ components merging at $\omega_{j_{k}}$. Since by {\bf Fact 5}, there exists no non-trivial loops  in the union of the closure of the components being merged. We have $M_j=\sum_{k=1}^{n_j} m_{j_k}$.
\end{proof}

Let $\mathbb{G}=(G, \widetilde{\pi})$ be the persistence module $\mathbb{F}(0,+\infty)\oplus(\oplus_{j}(\mathbb{F}(0,s_j])^{M_j})$, where $s_j=\ln\frac{1+|B(w_{j_1})|}{1-|B(w_{j_1})|}$ and $M_j=\sum_{k=1}^{n_j} m_{j_k}$. Let $(g_0)_t$ be the generator of $\mathbb{F}(0,+\infty)_t$. For each $p_j$ and let $(g_j^i)_t$ be the generator of  $\mathbb{F}(0,s_j]_t$, $i=1,\cdots,M_j$, where $\widetilde{\pi}_{s,t}((g_0)_s)=(g_0)_t$, $\widetilde{\pi}_{s,t}((g_j^i)_s)=(g_j^i)_t$.  Define $g_0:(0,+\infty)\rightarrow \mathbb{G}$ and $g_j^i(t):(0,+\infty)\rightarrow \mathbb{G}$ by
\[g_0(t)=(g_0)_t,~\text{for}~ t\in (0,+\infty)\]
\[g_j^i(t)=\left\{
	\begin{aligned}
		& (g_j^i)_t~&\text{for}~ t\in (0,s_j],\\
		& 0 ~&\text{for}~ t\notin (0,s_j].
	\end{aligned}
\right.
\]

Now, let us give the proof of Theorem A. Recall that $\mathbb{B}=(V^{|B|},\pi)$ is the persistence module induced by the Blashcke product $B$.  
\begin{proof}[\bf{Proof of Theorem A}]

	Let $B$ be a Blashcke product with property $\mathfrak{B}$. Consider $\Sigma_{B,t}=\{z;|B(z)|<\theta\}$, then $[\alpha_{k}]^{\mathbb{B}}_{t}$ is the generator of zeroth homology group of the component of $\Sigma_{B,t}$ containing $\alpha_{k}$.
	
	%Let $A$ be  the set of the zero points $\{\alpha_{k}\}$ of $B$, also we considered $A$ as an ordered set with $\alpha_i < \alpha_j$ when $i<j$. 
	We have
	$$
	V^{|B|}_{t}=
	\text{span}_{\mathbb{F}}\{[\alpha_{k}]^{\mathbb{B}}_{t};\alpha_{k}\in A\}.
	$$

	%A For each $\alpha_k$, we have the following fact.By the isolation property of zero points, for $\forall \alpha_{k}$,  for any given time $t$, there exists $\varepsilon_{k}$,  such that $\forall \tau \in [t, t+\varepsilon_{k})$, the zero points contained in $\Omega_{\tau}(\alpha_{k})$ are the same. As time t increases, the connected component $\Omega(\alpha_k)$ expands.   Then the component containing $\alpha_k$ will be merged finitely many times and eventually merged into the component containing $\alpha_0$. 

	By {\bf Fact 3}, for any $\alpha_k$ there exist $t_1<t_2<\cdots<t_{n_k}$ such that
	for each $t_i$, $\alpha[\Omega_{t}(\alpha_k)]$ will be updated\footnote{That is,  if $t_{i-1}< s_{i-1} \leq t_{i} <s_{i}\leq t_{i+1}$, $\alpha[\Omega_{s_{i-1}}(\alpha_k)]>\alpha[\Omega_{s_{i}}(\alpha_k)]$, where $i=1,\cdots,n_k$.}. Let $t_0=0$ and $t_{n_k+1}=+\infty$,  by {\bf Fact 1},  there exists a time period $(t_0,t_1]$ such that $\alpha_k = \alpha[\Omega_{s}(\alpha_k)]$ for $s\in (t_0,t_1]$. By {\bf Fact 3} $\alpha_0 = \alpha[\Omega_{s}(\alpha_k)]$  when $t_{n_k}<s<t_{n_k+1}$.

	 %$\Omega_t(\alpha_k)$ touches other components and satisfying that $\alpha[\Omega_t(\alpha_k)]$ will be updated after time t, i.e., if $t_{i-1}< s_{i-1} \leq t_{i} <s_{i}\leq t_{i+1}$, $\alpha[\Omega_{s_{i-1}}(\alpha_k)]>\alpha[\Omega_{s_{i}}(\alpha_k)]$, where $i=1,\cdots,n_k$ and $t_0=0$ and $t_{n_k+1}=+\infty$. It is easy to see that $\alpha_k = \alpha[\Omega_{s}(\alpha_k)]$ when $0<s\leq t_1$ and $\alpha_0 = \alpha[\Omega_{s}(\alpha_k)]$  when $t_{n_k}<s<+\infty$. For convenience, we denote the smallest zero points of $\Omega_{s}(\alpha_k)$ for $s\in (t_{0},t_1]$, $s\in(t_1,t_2]$, $\cdots$, $s\in(t_{n_k},t_{n_k+1}]$ by $\alpha_{k(0)},\alpha_{k(1)},\cdots,\alpha_{k(n_k)},\alpha_{k(n_k+1)}$, respectively, where $k(0)=k$ and $k(n_k+1)=0$.

	%Firstly. By {\bf Fact 2}, there will only be $n$ components  merging into the component $\Omega(\alpha_k)$ and there exists other zero point smaller than $\alpha_k$. We arrange the smallest zero points  of the components by $\alpha_{k1}<\cdots<\alpha_k<\cdots<\alpha_{kn}$. 
	
	For each $\alpha_k$ and time $t_1$ corresponding to $\alpha_k$, by Theorem \ref{cri_and_com} there must be a unique $\omega_{k'} \in W$ served as the common point on the boundaries of certain components, and one of which contains $\alpha_k$.  Define $b: A\rightarrow \mathbb{Z}^{+}$ to be the integer such that $\omega_{k'}\in p_{b(\alpha_k)}$.  For $p_{b(\alpha_k)} \in P$, by {\bf Fact 6}, there exist $M_{b(\alpha_k)}+1$ components merging at the critical points of $p_{b(\alpha_k)}$. Then we denote the merging components  by $\Omega_{t_1}^{0},\cdots,\Omega_{t_1}^{M_{b(\alpha_k)}}$ satisfying that  $\alpha[\Omega^{0}_{t_1}]<\cdots<\alpha[\Omega^{M_{b(\alpha_k)}}_{t_1}]$. 
	By the discussion above, there exists a unique  $\Omega_{t_1}^{l}$ containing $\alpha_k$, we define $a: A\rightarrow \mathbb{Z}^{+}$ by $a(\alpha_k)=l$. 
	
	Now we will define a map $F_{0}:A\rightarrow \{g_0\}\cup \{g^i_j; i=1,\cdots,M_j, j=1,\cdots ,|P| \}$ (Here when $|P|$ is not finite, we mean $j=1,2,\cdots$).
	 We  define $F_0(\alpha_0)=g_0$ and $F_0(\alpha_k)=g^{a(\alpha_k)}_{b(\alpha_k)}$.   To  illustrate this process, we provide Figure \ref{fig:cri_select} as an example. 
	\begin{figure}[h]
		\centering
		\includegraphics[width=0.7\textwidth]{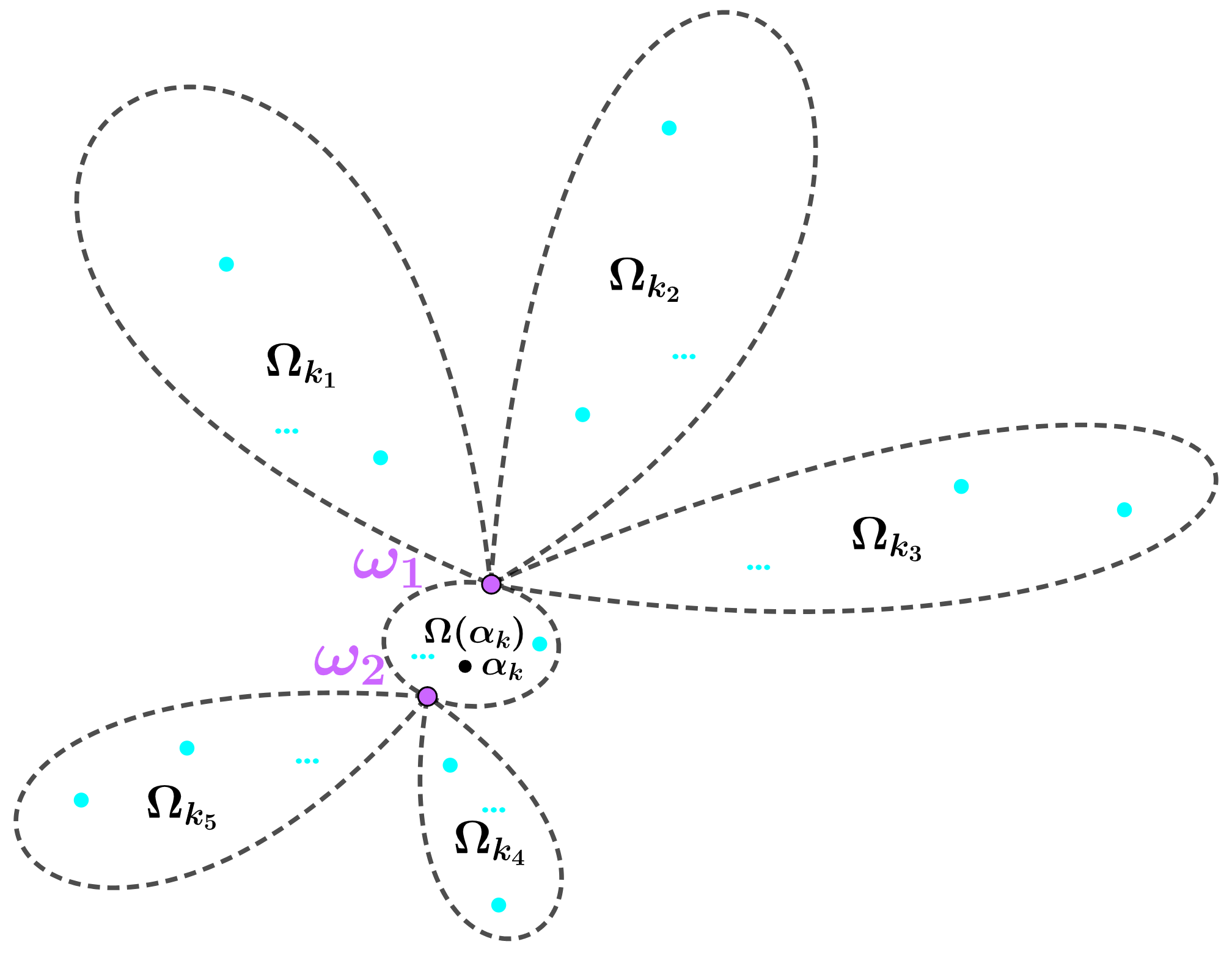}
		\caption{Five components $\Omega_{k_i}$ will merge with the component $\Omega(\alpha_{k})$ at $\omega_1$ and $\omega_2$, where $i=1,\cdots,5$. That is, there exists $p_j = \{\omega_1,\omega_2\}$. Each smallest zero point of the components satisfying that $\alpha[\Omega_{k_1}]>\alpha[\Omega_{k_2}]>\alpha_k>\alpha[\Omega_{k_4}]>\alpha[\Omega_{k_3}]>\alpha[\Omega_{k_5}]$. We have $F_0(\alpha[\Omega_{k_3}])=g_j^{1}$, $F_0(\alpha[\Omega_{k_4}])=g_j^{2}$, $F_0(\alpha_k)=g_j^{3}$, $F_0(\alpha[\Omega_{k_2}])=g_j^{4}$ and $F_0(\alpha[\Omega_{k_1}])=g_j^{5}$.}
		\label{fig:cri_select}
	\end{figure}
	
	Now we will show that $F_0$ is a bijection. If $F_0(\alpha_k)=F_0(\alpha_{k'})$ for two zero points $\alpha_k$ and $\alpha_{k'}$, we have $g^{a(\alpha_k)}_{b(\alpha_k)}=g^{a(\alpha_{k'})}_{b(\alpha_{k'})}$. That is, $a(\alpha_k)=a(\alpha_{k'})$ and $b(\alpha_k)=b(\alpha_{k'})$, which implies $\alpha_k=\alpha_{k'}$ since $a\times b$ is injective by construction. 
	For surjection, firstly we have $F_0(\alpha_0)=g_0$. For each $g_b^a$, we consider the smallest zero points $\alpha[\Omega_0]<\alpha[\Omega_1]<\cdots<\alpha[\Omega_{M_b}]$ in the components merging at the critical points in $p_b$, we have $F_0(\alpha[\Omega_i])=g_b^a$. 
	
	%For each $p_j$, we consider the components corresponding to $p$ 

	 %Since $F_0(\{\alpha_{k1},\cdots,\alpha_{kn}\})\subseteq \{g^{i(\alpha_{k_1})}_{j(\alpha_{k_1})},\cdots,g^{i(\alpha_{k_n})}_{j(\alpha_{k_n})}\}$ and $F_0$ is an injection, we have $F_0$ is a surjection.
	
	 %Without loss of generality, let $k'<k$. That is, $i(\alpha_k)=i(\alpha_{k'})$ and $j(\alpha_k)=j(\alpha_{k'})$, which means that there exist critical point in $p_{j(\alpha_k)}$ such that $\alpha_k$ and $\alpha_{k'}$ are in the same $i(\alpha_k)$-th component before the components merged at the critical point. By the definition of $j$, $j(\alpha_{k'})<j(\alpha_{k})$, which makes a conflict. Since $F_0(\{\alpha_{k1},\cdots,\alpha_{kn}\})\subseteq \{g^{i(\alpha_{k1})}_{j(\alpha_{k1})},\cdots,g^{i(\alpha_{kn})}_{j(\alpha_{kn})}\}$ and $F_0$ is an injection, we have $F_0$ is a surjection.
	
	 For convenience, we denote the smallest zero points of $\Omega_{s}(\alpha_k)$ for $s\in (t_i,t_i+1]$ by $\alpha_{k(i)}$, where $i=0,\cdots,n_k-1$, and the smallest zero point of $\Omega_{s}(\alpha_k)$ for $s\in(t_{n_k},t_{n_k+1})$ by $\alpha_{k(n_k+1)}$, respectively, where $k(0)=k$ and $k(n_k+1)=0$.
	
	Next, define \begin{align*}
		F: &\mathbb{B}\rightarrow\mathbb{G}\\
		&[\alpha_0]^{\mathbb{B}}_t\mapsto(g_0)_t\\
		&[\alpha_k]^{\mathbb{B}}_t\mapsto\sum_{l=0}^{n_k+1}F_0(\alpha_{k(l)})(t).
	\end{align*}  Now we will show $F$ is an isomorphism.
	
	For injection, if $F([\alpha_k]^{\mathbb{B}}_t)=F([\alpha_{k'}]^{\mathbb{B}}_t)$, there must be integer numbers $m<n_k$ and $m'<n_{k'}$ such that
	\begin{align*}
		F([\alpha_k]^{\mathbb{B}}_t)=\sum_{l=0}^{m+1}F_0(\alpha_{k(l)})(t),\\
		F([\alpha_{k'}]^{\mathbb{B}}_t)=\sum_{l'=0}^{m'+1}F_0(\alpha_{k'(l')})(t),
	\end{align*}
 	where each element of $(F_0(\alpha_{k(l)}))_t$ and $(F_0(\alpha_{k'(l')}))_t$ is non-zero for $l=0,\cdots,m+1$ and $l'=0,\cdots,m'+1$. Since $F_0$ is a bijection,  we have $m=m'$. The fact that the smallest zero points satisfying that $k(l)>k(l+1)$ and $k'(l')>k'(l'+1)$ for $l=0,\cdots,m$ and $l'=0,
 	\cdots,m'$ implies
 	 $$F_0(\alpha_{k(l)})(t)=F_0(\alpha_{k'(l)})(t)\  \text{and}\  \alpha_{k(l)}=\alpha_{k'(l)}.$$
 	 In particularly, $\alpha_k=\alpha_{k(0)}=\alpha_{k'}$.

	 For each non-zero $(g^{i}_{j})_t \in G_t$, take $\alpha_k= (F_0)^{-1}(g^{i}_{j})$, we have $F([\alpha_k]^{\mathbb{B}}_t-[\alpha_{k(1)}]^{\mathbb{B}}_t)=(g^{i}_{j})_t$. Then $F$ is a surjection at each time $t$.
	 
	 Now we will prove that the diagram  commutes,
	 \begin{center}
	 	\begin{tikzcd}
	 		B_s \arrow{d}{F_{s}} \arrow{r}{\pi_{s,t}} & B_t\arrow{d}{F_t}\\
	 		G_s \arrow{r}{\tilde{\pi}_{s,t}}         & G_t
	 	\end{tikzcd}
	 \end{center}
	 For any sequence $\{s_1,s_2,\cdots,s_{n_k},s_{n_k+1}\}$ satisfying that $t_0<s_0\leq t_1<s_1\leq\cdots<s_{n_k}\leq t_{n_k}<s_{n_k}<t_{n_k+1}$, where $t_0=0$ and $t_{n_k+1}=+\infty$,
	 we have the following commutative diagram,
	 \begin{center}
	 	\begin{tikzcd}
	 		\left[ \alpha_{k}\right]^{\mathbb{B}}_{s_0}\arrow{r}{F_{s_0}} \arrow{d}{\pi_{{s_0},{s_1}}} &
	 		\sum_{l=0}^{n_k+1}F_0(\alpha_{k(l)})(s_0)  \arrow{d}{\tilde{\pi}_{s_0,s_1}} \\
	 		\left[ \alpha_{k}\right]^{\mathbb{B}}_{s_1} \arrow{r}{F_{s_1}} \arrow{d}{\vdots} & \sum_{l=1}^{n_k+1}F_0(\alpha_{k(l)})(s_1) \arrow{d}{\vdots}\\
	 		\left[\alpha_{k}\right]^{\mathbb{B}}_{s_{n_k}} \arrow{r}{F_{s_{n_k}}} \arrow{d}{\pi_{{s_{n_k}},{s_{n_k+1}}}} &\sum_{l=n_k}^{n_k+1}F_0(\alpha_{k(l)})(s_{n_k}) \arrow{d}{\tilde{\pi}_{s_{n_k},s_{n_k+1}}}\\
	 		\left[ \alpha_{k}\right]^{\mathbb{B}}_{s_{n_k+1}} \arrow{r}{F_{s_{n_k+1}}}& F_0(\alpha_{k(n_k+1)})(s_{n_k+1})	 		
	 	\end{tikzcd}
	 \end{center}
	 Notice that on the second level, the summation is starting from  $l=1$, because $F_0(\alpha_{k(0)})(s_1)=0$.
	 Then $\mathbb{B}\cong\mathbb{G}$. We complete the proof.
	 
\end{proof}

\begin{remark}	
	 By Theorem \ref{unique}, this decomposition is unique up to isomorphism.
\end{remark}

Noticing that there may be infinitely many $w_{j}$ with same function value $|B(w_j)|$,  which implies that the associated persistence modules are potentially not of locally finite type.  

\section{Persistence modules induced by inner functions with strong Property $\mathfrak{B}$ and interleaving distances}

In this section, the interleaving distances of the persistence modules induced by inner functions with strong Property $\mathfrak{B}$ are studied. We will prove Theorem B which is one of our main results. To prove it, we need a series of lemmas. Firstly, we show that every inner function with weak Property $\mathfrak{B}$ is a Blaschke product. The proof is  similar to the cases of Property $\mathfrak{B}$ \cite{Berman-1984} and Property $\mathfrak{H}$ \cite{Ne80}.

\begin{lemma}\label{weakB}
	Let $B$ be an inner function with weak Property $\mathfrak{B}$. Then, $B$ is a Blaschke product.
\end{lemma}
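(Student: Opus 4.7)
The plan is to prove the lemma via the canonical factorization of inner functions combined with a radial boundary behavior argument for the singular factor. Since $B$ is an inner function, its zero set automatically satisfies the Blaschke condition, so I write $B = B_0 \cdot S$, where $B_0$ is the Blaschke product formed from the zeros of $B$ (with multiplicities) and $S := B/B_0$ is a zero-free inner function on $\mathbb{D}$. By the canonical factorization, $S$ equals a unimodular constant times a singular inner function $S_\mu$ associated with a positive finite singular Borel measure $\mu$ on $\partial \mathbb{D}$. The goal reduces to showing $\mu = 0$, after which $B$ agrees with $B_0$ up to a unimodular constant and is therefore a Blaschke product.

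Suppose for contradiction that $\mu \neq 0$. The key classical input I would invoke is the radial boundary behavior of Poisson integrals of singular measures: for the positive harmonic function $-\log|S_\mu(z)| = \int_{\partial \mathbb{D}} P(z, \zeta)\,d\mu(\zeta)$, where $P$ is the Poisson kernel, one has $\lim_{r \to 1^-}\bigl(-\log|S_\mu(r\xi)|\bigr) = +\infty$ for $\mu$-almost every $\xi \in \partial \mathbb{D}$. Since $\mu \neq 0$, there is at least one such $\xi_0$, and from $|B(z)| = |B_0(z)|\,|S(z)| \leq |S(z)|$ we deduce $|B(r\xi_0)| \to 0$ as $r \to 1^-$.

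Now let $\theta_0 \in (0,1)$ be the level witnessing the weak Property $\mathfrak{B}$ of $B$. Choose $r_0 \in (0,1)$ close enough to $1$ so that $|B(r\xi_0)| < \theta_0$ for every $r \in [r_0, 1)$. Then the radial segment $\gamma = \{r\xi_0 : r \in [r_0, 1)\}$ is a connected subset of $\Omega_{B,\theta_0}$, hence lies in a single component $\Omega_0$ of that level set. But $\xi_0 \in \overline{\gamma} \subseteq \overline{\Omega_0}$, so $\overline{\Omega_0}$ meets $\partial \mathbb{D}$ and is not contained in $\mathbb{D}$, contradicting the weak Property $\mathfrak{B}$ assumption at level $\theta_0$. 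Hence $\mu = 0$, and $B$ is a Blaschke product.

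The main obstacle I anticipate is the careful invocation of the Fatou--Plessner type boundary fact, namely that a nontrivial singular inner function must decay radially to zero along some direction. Once that classical ingredient is on the table (it appears, for instance, in Garnett's treatment of bounded analytic functions and of Poisson--Stieltjes integrals), the rest is a short connectedness reduction; this is essentially the strategy used by Berman for Property $\mathfrak{B}$ and by Nestoridis for Property $\mathfrak{H}$, now specialized to the single level $\theta_0$ provided by the weak version.
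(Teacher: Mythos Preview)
Your proof is correct. Both you and the paper use the canonical factorization $B=B_0\cdot S_\mu$ and aim to show that a nontrivial singular factor forces some component of $\Omega_{B,\theta_0}$ to accumulate on $\partial\mathbb{D}$. The difference is in how this last point is established. The paper appeals to Lemma~\ref{sc-components}(2): every component $\Omega_0$ of a level set of an inner function satisfies $\inf_{\Omega_0}|u|=0$; since $S_\mu$ is zero-free, this infimum would be strictly positive on any component whose closure is compact in $\mathbb{D}$, so each component of $\Omega_{S_\mu,\theta}$ must reach $\partial\mathbb{D}$, and then the inclusion $\Omega_{S_\mu,\theta}\subseteq\Omega_{B,\theta}$ (from $|B|\le|S_\mu|$) transfers this to $B$. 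You instead use the Fatou--Plessner fact that $|S_\mu(r\xi)|\to 0$ radially for $\mu$-a.e.\ $\xi$, which gives a concrete radial segment inside $\Omega_{B,\theta_0}$ accumulating at $\xi_0\in\partial\mathbb{D}$. Your route is self-contained and avoids the dependence on Lemma~\ref{sc-components}, at the cost of invoking the differentiation theory for singular measures; the paper's route is shorter once that lemma is in hand. Either way the argument is standard and essentially the same as Berman's and Nestoridis's, specialized to a single level.
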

\begin{proof}
Notice that for every singular inner function $S$,  each component of the level set $\Omega_{S,\theta}$ (in fact, it has only one component) has boundary points on  $\partial{\mathbb{D}}$ for any $\theta\in(0,1)$, by Lemma \ref{sc-components}. Then, the inner function $B$ with weak Property $\mathfrak{B}$ can not have a singular inner function factor, and hence $B$ is a Blaschke product.
\end{proof}

The following lemma extends a result from Property $\mathfrak{H}$ to weak Property $\mathfrak{H}$, whose proof is essentially the same as the discussion of  Nestoridis \cite[p. 201]{Ne80}.
Recall that for an inner function $B$ with $\eta$-weak Property $\mathfrak{H}$, \[
\delta_{B,\eta}=\sup\{\rho(z_1, z_2);~z_1 \ \text{and} \ z_2 \ \text{belong to the same component of} \ \Omega_{B,\eta}\}.
\]
\begin{lemma}\label{zero number}
	Let $B$ be an inner function with $\eta$-weak Property $\mathfrak{H}$. Then each component of the level set $\Omega_{B,\eta}$ contains at least one and no more than ${\ln\eta}/{\ln\delta_{B,\eta}}$ numbers of zeros of $B$.
\end{lemma}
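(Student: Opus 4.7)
The plan is to split the lemma into the lower and upper bound on the number of zeros, handling the upper bound by a Schwarz-Pick-type Blaschke factoring argument at a boundary point of the component. Before starting, I record that, because $\delta_{B,\eta}<1$, any component $\Omega_{0}$ of $\Omega_{B,\eta}$ is contained in a pseudo-hyperbolic disk of radius $\delta_{B,\eta}$ around any one of its points; such a disk is a Euclidean disk whose Euclidean closure lies in $\mathbb{D}$, so $\overline{\Omega_{0}}\subset\mathbb{D}$ automatically. Lemma \ref{weakB} already tells us $B$ is a Blaschke product, and because the zero sequence of a Blaschke product accumulates only on $\partial\mathbb{D}$, the compact set $\overline{\Omega_{0}}$ contains only finitely many zeros of $B$.

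For the lower bound I apply Lemma \ref{sc-components}(2), which gives $\inf_{\Omega_{0}}|B|=0$; since $|B|$ is continuous on the compact set $\overline{\Omega_{0}}\subset\mathbb{D}$, this infimum is attained, producing at least one zero of $B$ inside $\Omega_{0}$. For the upper bound, let $z_{1},\ldots,z_{N}$ be the zeros of $B$ in $\Omega_{0}$ counted with multiplicity, and factor $B=B_{1}B_{2}$ with
\[
B_{1}(z)=\prod_{i=1}^{N}\frac{|z_{i}|}{z_{i}}\cdot\frac{z_{i}-z}{1-\overline{z_{i}}z}
\]
and $B_{2}$ the remaining inner (Blaschke) factor. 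Pick any boundary point $z^{\ast}\in\partial\Omega_{0}\subset\mathbb{D}$; since $\Omega_{B,\eta}$ is open and $z^{\ast}\notin\Omega_{B,\eta}$ (lying in no other component either, by disjointness of components), continuity of $|B|$ forces $|B(z^{\ast})|=\eta$. Each factor of $B_{1}$ at $z^{\ast}$ has modulus $\rho(z^{\ast},z_{i})$; approximating $z^{\ast}$ by a sequence $w_{n}\in\Omega_{0}$ and using joint continuity of $\rho$ on $\mathbb{D}\times\mathbb{D}$ yields $\rho(z^{\ast},z_{i})=\lim_{n}\rho(w_{n},z_{i})\leq\delta_{B,\eta}$. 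Combined with $|B_{2}(z^{\ast})|\leq 1$, this gives
\[
\eta=|B(z^{\ast})|\leq|B_{1}(z^{\ast})|=\prod_{i=1}^{N}\rho(z^{\ast},z_{i})\leq\delta_{B,\eta}^{N},
\]
and, since both $\ln\eta$ and $\ln\delta_{B,\eta}$ are negative, dividing through by $\ln\delta_{B,\eta}$ reverses the inequality to give $N\leq\ln\eta/\ln\delta_{B,\eta}$.

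The main obstacle I anticipate is justifying that the pseudo-hyperbolic diameter bound, which is stated for pairs of interior points of $\Omega_{B,\eta}$, transfers to the boundary point $z^{\ast}$; this is exactly where the containment $\overline{\Omega_{0}}\subset\mathbb{D}$ (established from $\delta_{B,\eta}<1$ via the disk covering argument above) is essential, since it keeps $\rho$ well-defined and continuous in the limit. A smaller but genuine point is verifying that the residual factor $B_{2}$ really is a contraction on $\mathbb{D}$; this is automatic from Lemma \ref{weakB} together with the standard factorization of a Blaschke product into a product of a finite Blaschke factor and another Blaschke product.
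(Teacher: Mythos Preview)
Your proof is correct and is precisely the argument the paper has in mind: the paper states the lemma without proof, deferring to Nestoridis \cite[p.~201]{Ne80}, and the very same factorization-at-a-boundary-point estimate you use (write $\eta=|B(\xi)|=\bigl(\prod_{n}\rho(\alpha_n,\xi)\bigr)\,|\widehat B(\xi)|\le \delta_{B,\eta}^{\,N}$) appears verbatim in the paper's proof of Lemma~\ref{limto0}. Your extra observation that $\delta_{B,\eta}<1$ alone forces $\overline{\Omega_0}\subset\mathbb{D}$ (so that the boundary point $z^\ast$ lies in $\mathbb{D}$ and the limit argument for $\rho$ goes through) is a clean way to handle the one point the paper leaves implicit.
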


Now, let us continue to study the Blaschke products with weak Property $\mathfrak{H}$.

\begin{lemma}\label{rhoalphabeta}
Let $B$ be a Blaschke products with $\eta$-weak Property $\mathfrak{H}$. If $\widetilde{B}$ is an inner function satisfying $\|B-\widetilde{B}\|_{\infty}=\eta'<\eta$, then $\widetilde{B}$ has $(\eta-\eta')$-weak Property $\mathfrak{H}$. More precisely, we have 
\[
\Omega_{\widetilde{B},{\eta-\eta'}}\subseteq\Omega_{B,\eta} \ \ \ \ \ \text{and} \ \ \ \ \  \delta_{\widetilde{B}, \eta-\eta'}\le\delta_{B,\eta}.
\]  
Moreover, we may arrange the zero points of $B$ and $\widetilde{B}$ as $\{\alpha_{i}\}_{i}$ and $\{\beta_{i}\}_{i}$, respectively, such that each pair of $\alpha_i$ and $\beta_{i}$ are contained in the same component of the level set $\Omega_{B,\eta}$ and $\sup\rho(\alpha_i,\beta_i)<\delta_{B,\eta}$ for all $i$.
\end{lemma}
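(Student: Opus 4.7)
My plan is to split the proof into three steps: (i) establish the set inclusion $\Omega_{\widetilde{B},\eta-\eta'}\subseteq\Omega_{B,\eta}$ via the triangle inequality, (ii) transfer weak Property $\mathfrak{H}$ from $B$ to $\widetilde{B}$ as a corollary of that inclusion, and (iii) pair up the zeros of $B$ and $\widetilde{B}$ component by component using Rouch\'e's theorem. The pseudo-hyperbolic estimate on $\rho(\alpha_i,\beta_i)$ will then be read off from the fact that the paired zeros sit in a common component of $\Omega_{B,\eta}$.

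\textbf{Inclusion and $\delta$ estimate.} First I would observe that if $|\widetilde{B}(z)|<\eta-\eta'$, then
\[
|B(z)|\leq |B(z)-\widetilde{B}(z)|+|\widetilde{B}(z)|<\eta'+(\eta-\eta')=\eta,
\]
so $\Omega_{\widetilde{B},\eta-\eta'}\subseteq\Omega_{B,\eta}$. A component of the smaller set, being connected, lies inside a single component of the larger one. Hence any pseudo-hyperbolic pair in a component of $\Omega_{\widetilde{B},\eta-\eta'}$ is also such a pair in a component of $\Omega_{B,\eta}$, giving $\delta_{\widetilde{B},\eta-\eta'}\leq\delta_{B,\eta}<1$. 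By weak Property $\mathfrak{B}$ of $B$, each component of $\Omega_{B,\eta}$ has closure in $\mathbb{D}$; the same is inherited by the (smaller) components of $\Omega_{\widetilde{B},\eta-\eta'}$. This yields $(\eta-\eta')$-weak Property $\mathfrak{H}$ for $\widetilde{B}$, and Lemma \ref{weakB} upgrades $\widetilde{B}$ to a Blaschke product.

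\textbf{Pairing zeros by Rouch\'e.} Let $D$ be a component of $\Omega_{B,\eta}$. By weak Property $\mathfrak{B}$, $\overline{D}\Subset\mathbb{D}$, and by Lemma \ref{sc-components} $D$ is simply connected, so $\partial D$ is a Jordan curve on which $|B|\equiv\eta$. On $\partial D$ we have
\[
|B(z)-\widetilde{B}(z)|\leq\|B-\widetilde{B}\|_{\infty}=\eta'<\eta=|B(z)|,
\]
so Rouch\'e's theorem applies and $B$ and $\widetilde{B}$ have the same number of zeros in $D$ counted with multiplicity. By Lemma \ref{zero number} this number is finite. I would match them arbitrarily (with multiplicity) inside $D$ and repeat for every component. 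Every zero $\alpha$ of $B$ trivially lies in some such component, and every zero $\beta$ of $\widetilde{B}$ satisfies $|B(\beta)|=|(B-\widetilde{B})(\beta)|\leq\eta'<\eta$, so also lies in a component of $\Omega_{B,\eta}$. Hence the matching exhausts both zero sets, yielding re-indexed sequences $\{\alpha_i\}$, $\{\beta_i\}$ with $\alpha_i,\beta_i$ in a common component $D_i$ of $\Omega_{B,\eta}$.

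\textbf{Distance bound and main obstacle.} Since $\alpha_i,\beta_i\in D_i$, the definition of $\delta_{B,\eta}$ immediately gives $\rho(\alpha_i,\beta_i)\leq\delta_{B,\eta}$. To promote this to the strict bound $\sup_i\rho(\alpha_i,\beta_i)<\delta_{B,\eta}$, I would exploit that both zeros actually lie in the smaller closed set $\overline{\Omega_{B,\eta'}}\cap\overline{D_i}$ (because $|B(\alpha_i)|=0$ and $|B(\beta_i)|\leq\eta'$), which is compactly contained in the interior of $D_i$ since $|B|\equiv\eta>\eta'$ on $\partial D_i$. I expect the hard part to be exactly this last step: turning the inclusion $\overline{\Omega_{B,\eta'}}\subsetneq\Omega_{B,\eta}$ into a \emph{uniform} strict $\rho$-gap across all (possibly infinitely many) components $D_i$. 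The plan here is to use weak Property $\mathfrak{H}$ of $B$ together with a continuity/compactness argument on the family of components — specifically, that the pseudo-hyperbolic diameter of the compact slice $\overline{\Omega_{B,\eta'}}\cap\overline{D_i}$ is strictly dominated by that of $\overline{D_i}$ by a margin controlled by the Hoffman-type separation (Lemma \ref{Hoffman}) valid for Blaschke products with the properties we have already established.
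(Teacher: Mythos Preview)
Your argument is essentially the paper's: the inclusion $\Omega_{\widetilde{B},\eta-\eta'}\subseteq\Omega_{B,\eta}$ via the triangle inequality, the immediate $\delta$-estimate, and Rouch\'e component-by-component to pair the zeros. The paper ends its proof by simply reading off $\rho(\alpha_i,\beta_i)<\delta_{B,\eta}$ from the fact that each pair lies in a common (open) component of $\Omega_{B,\eta}$; it does not supply the uniformity argument you anticipate.

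Where you diverge is in the last paragraph, and there you over-engineer and also misfire. Invoking Hoffman's Lemma cannot work: that lemma is stated for \emph{interpolating} Blaschke products, which is strictly stronger than weak Property~$\mathfrak{H}$ (Proposition~\ref{interWPH} only gives one implication, and Proposition~\ref{CNWPH} shows the class is at most Carleson--Newman). So the tool you reach for is unavailable under the hypotheses. Fortunately it is also unnecessary: the paper treats the bound as immediate from membership in the open component, and if you trace the only downstream use (Claim~1 in Lemma~\ref{dintDelta} feeds into Lemma~\ref{ineq2}, whose hypothesis is $\rho(x,y)\le\delta_0$), you see that the non-strict inequality $\rho(\alpha_i,\beta_i)\le\delta_{B,\eta}$ already suffices for everything that follows. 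Drop the Hoffman plan and either record the non-strict bound, or note that each pair lies in an open component so the diameter is not attained pointwise.
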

\begin{proof}
Consider the level set $\Omega_{B,\eta}=\{z\in\mathbb{D}; |B(z)|<\eta\}$. For any $z\in\mathbb{D}\setminus\Omega_{B,\eta}$, we have $|B(z)|\ge\eta$ and then together with $\|B-\widetilde{B}\|_{\infty}=\eta'<\eta$, one can see that
\begin{equation*}
	\left|\widetilde{B}(z)\right|=\left|B(z)-(B(z)-\widetilde{B}(z))\right|\ge\left||B(z)|-|B(z)-\widetilde{B}(z)|\right|\ge\eta-\eta'.
\end{equation*}
Consequently, $\Omega_{\widetilde{B},{\eta-\eta'}}\subseteq\Omega_{B,\eta}$, which implies each component of the level set $\Omega_{\widetilde{B}, \eta-\eta'}$ is contained in some component of $\Omega_{B,\eta}$. Then, 
\[
\delta_{\widetilde{B}, \eta-\eta'}\le\delta_{B,\eta},
\]
and hence $\widetilde{B}$ has $(\eta-\eta')$-weak Property $\mathfrak{H}$.

Let $\Omega_0$ be an arbitrary component of the level set $\Omega_{B,\eta}$. For any $\xi\in\partial\Omega_0\subseteq\mathbb{D}$, we have
\begin{equation*}
	|B(\xi)|=\eta>\|B-\widetilde{B}\|_{\infty}\ge|B(\xi)-\widetilde{B}(\xi)|.
\end{equation*}
By Rouch\'{e}'s theorem, the analytic functions $B$ and $\widetilde{B}$ have the same number of zeros in the component  $\Omega_0$. Then, we may arrange the zero points of $B$ and $\widetilde{B}$ as $\{\alpha_{i}\}_{i}$ and $\{\beta_{i}\}_{i}$ respectively, such that each pair of $\alpha_i$ and $\beta_{i}$ are contained in the same component of the level set $\Omega_{B,\eta}$. Furthermore, one can see that for every $i$,
\[
\sup\rho(\alpha_i,\beta_i)<\delta_{B,\eta}.
\]
\end{proof}

\begin{corollary}
The set of the inner functions with weak Property $\mathfrak{H}$ is open in the space of all inner functions.
\end{corollary}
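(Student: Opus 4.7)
The plan is to deduce this openness statement directly from Lemma~\ref{rhoalphabeta}, which already does essentially all of the work; the role of the corollary is to repackage that quantitative perturbation estimate as a topological statement. So the argument should be short and essentially a matter of unpacking definitions and choosing parameters carefully.

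First, I would fix an inner function $B$ with weak Property $\mathfrak{H}$ and unpack the definition: there exists $\eta \in (0,1)$ such that $B$ has $\eta$-weak Property $\mathfrak{H}$, meaning both that the closure of each component of $\Omega_{B,\eta}$ is contained in $\mathbb{D}$ and that $\delta_{B,\eta}<1$. I would then choose any $\eta' \in (0,\eta)$ and claim that the open ball
\[
U(B,\eta') \;=\; \{\widetilde{B} \text{ inner} : \|B-\widetilde{B}\|_{\infty} < \eta'\}
\]
is a neighborhood of $B$ in the space of inner functions consisting entirely of inner functions with weak Property $\mathfrak{H}$.

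To verify this, take $\widetilde{B} \in U(B,\eta')$. Applying Lemma~\ref{rhoalphabeta} with this choice of $\eta$ and with $\eta'$ playing the role of $\|B-\widetilde{B}\|_{\infty}$, one obtains the two key inclusions $\Omega_{\widetilde{B},\eta-\eta'} \subseteq \Omega_{B,\eta}$ and $\delta_{\widetilde{B},\eta-\eta'} \le \delta_{B,\eta} < 1$. The second inclusion is exactly the $\delta$-condition in the definition of $(\eta-\eta')$-weak Property $\mathfrak{H}$. For the weak Property $\mathfrak{B}$ part, I would observe that each component of $\Omega_{\widetilde{B},\eta-\eta'}$ lies in some component of $\Omega_{B,\eta}$; since by assumption the closure of every such component of $\Omega_{B,\eta}$ is contained in $\mathbb{D}$, the closure of the smaller sub-component is also contained in $\mathbb{D}$. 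Thus $\widetilde{B}$ has $(\eta-\eta')$-weak Property $\mathfrak{B}$, and combined with the $\delta$-bound, $\widetilde{B}$ has $(\eta-\eta')$-weak Property $\mathfrak{H}$.

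There is no real obstacle here; the only mild subtlety I would be careful about is that the $\eta$ in ``$\eta$-weak Property $\mathfrak{H}$'' must serve simultaneously as the level at which weak Property $\mathfrak{B}$ holds and as the level at which $\delta_{B,\eta}<1$, so I would state the choice of $\eta$ explicitly in that combined form, and then deduce both conclusions for $\widetilde{B}$ at the shifted level $\eta-\eta'$ from the single application of Lemma~\ref{rhoalphabeta}.
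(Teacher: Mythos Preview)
Your proposal is correct and matches the paper's approach: the corollary is stated immediately after Lemma~\ref{rhoalphabeta} with no separate proof, since that lemma already asserts that $\widetilde{B}$ has $(\eta-\eta')$-weak Property~$\mathfrak{H}$ whenever $\|B-\widetilde{B}\|_\infty=\eta'<\eta$. One tiny point of notation: in your write-up $\eta'$ is the radius of the ball rather than the actual value $\|B-\widetilde{B}\|_\infty$, so when invoking the lemma you should use the latter (which is $<\eta'<\eta$); the conclusion that $\widetilde{B}$ has weak Property~$\mathfrak{H}$ at level $\eta-\|B-\widetilde{B}\|_\infty$ is unaffected.
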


Let $B$ be a Blaschke products with $\theta$-weak Property $\mathfrak{B}$ and let $\widetilde{B}$ be an inner function satisfying $\|B-\widetilde{B}\|_{\infty}=\theta'<\theta$. Similar to the poof of  Lemma \ref{rhoalphabeta}, one can also see that the level set $\Omega_{\widetilde{B},{\theta-\theta'}}$ is contained in the level set $\Omega_{B,\theta}$ and consequently $\widetilde{B}$ has $(\theta-\theta')$-weak Property $\mathfrak{B}$. Then, we obtain the following result immediately.

\begin{corollary}
The set of the inner functions with weak Property $\mathfrak{B}$ is open in the space of all inner functions.
\end{corollary}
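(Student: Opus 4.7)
The plan is to recognize that the corollary is an immediate reformulation of the observation made in the paragraph preceding it: if $B$ has $\theta$-weak Property $\mathfrak{B}$ and $\widetilde{B}$ is an inner function with $\|B-\widetilde{B}\|_{\infty}=\theta'<\theta$, then $\widetilde{B}$ has $(\theta-\theta')$-weak Property $\mathfrak{B}$. Once this implication is verified, the open ball of radius $\theta$ (in the supremum norm) around $B$ sits entirely inside the set of inner functions with weak Property $\mathfrak{B}$, which is precisely the openness statement.

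Concretely, I would start by fixing an arbitrary inner function $B$ with weak Property $\mathfrak{B}$, so there exists $\theta\in(0,1)$ for which every component of $\Omega_{B,\theta}$ has its closure contained in $\mathbb{D}$. Given any inner function $\widetilde{B}$ with $\|B-\widetilde{B}\|_{\infty}=\theta'<\theta$, I would repeat the triangle inequality argument from the proof of Lemma \ref{rhoalphabeta}: for any $z\in\mathbb{D}\setminus\Omega_{B,\theta}$ one has $|B(z)|\geq\theta$, hence
\[
|\widetilde{B}(z)|\geq|B(z)|-|B(z)-\widetilde{B}(z)|\geq\theta-\theta',
\]
which yields the inclusion $\Omega_{\widetilde{B},\theta-\theta'}\subseteq\Omega_{B,\theta}$.

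Next, I would observe that since components of an open subset of $\mathbb{C}$ are themselves open and connected, every component $\widetilde{\Omega}_0$ of $\Omega_{\widetilde{B},\theta-\theta'}$ is contained in some component $\Omega_0$ of $\Omega_{B,\theta}$; taking closures gives $\overline{\widetilde{\Omega}_0}\subseteq\overline{\Omega_0}\subset\mathbb{D}$ by the hypothesis on $B$. This shows $\widetilde{B}$ has $(\theta-\theta')$-weak Property $\mathfrak{B}$, and hence weak Property $\mathfrak{B}$.

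There is essentially no obstacle — the result is a direct packaging of the inclusion of sublevel sets established just above. The only mildly technical point worth checking is that the components of $\Omega_{\widetilde{B},\theta-\theta'}$ really do refine those of $\Omega_{B,\theta}$, which is the standard fact that in a topological space a connected subset of an open set lies in a single connected component of that set.
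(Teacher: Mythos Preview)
Your proposal is correct and follows exactly the paper's own argument: the paper derives the corollary immediately from the observation (stated just before it, and proved by the same triangle-inequality computation you wrote out) that $\|B-\widetilde{B}\|_{\infty}=\theta'<\theta$ forces $\Omega_{\widetilde{B},\theta-\theta'}\subseteq\Omega_{B,\theta}$, hence $\widetilde{B}$ inherits $(\theta-\theta')$-weak Property~$\mathfrak{B}$. Your added remark about components refining is the only extra detail, and it is routine.
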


\begin{lemma}\label{limto0}
	Let $B$ be a Blaschke product with weak Property $\mathfrak{H}$. Then 
	\begin{equation}
		\lim\limits_{\eta\to0}\delta_{B,\eta}=0.
	\end{equation}
\end{lemma}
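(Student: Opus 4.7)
The plan is to argue by contradiction using a M\"obius normalization, Montel's theorem for the inner functions, and Hausdorff convergence of the offending components. Since $\delta_{B,\eta}$ is non-decreasing in $\eta$ (larger $\eta$ yields a larger level set), the limit $L := \lim_{\eta \to 0^+}\delta_{B,\eta}$ exists in $[0,\delta_0]$, where $\delta_0 := \delta_{B,\eta_0} < 1$ comes from the witness $\eta_0$ of weak Property $\mathfrak{H}$. Suppose for contradiction that $L>0$. For each $n$, I would pick $\eta_n\to 0^+$ with $\eta_n<\eta_0$, a component $\tilde{\Omega}_n$ of $\Omega_{B,\eta_n}$ with $\delta(\tilde{\Omega}_n)>L/2$, and a zero $\alpha_n\in\tilde{\Omega}_n$ (which exists by Lemma~\ref{sc-components}). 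Letting $\varphi_n$ be the M\"obius automorphism of $\mathbb{D}$ sending $0\mapsto\alpha_n$, set $B_n=B\circ\varphi_n$ and $\tilde{\Omega}_n'=\varphi_n^{-1}(\tilde{\Omega}_n)$. By M\"obius invariance of $\rho$ (Lemma~\ref{keeptr}), $\delta(\tilde{\Omega}_n')=\delta(\tilde{\Omega}_n)>L/2$ and $\delta(\tilde{\Omega}_n')\leq\delta_{B_n,\eta_0}=\delta_0$. Since $0\in\tilde{\Omega}_n'$, every $z\in\tilde{\Omega}_n'$ satisfies $|z|=\rho(z,0)\leq\delta_0$, so $\tilde{\Omega}_n'\subset\overline{D(0,\delta_0)}$, a fixed compact subset of $\mathbb{D}$. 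Moreover, a hyperbolic triangle-inequality argument applied to the connected set $\tilde{\Omega}_n'$ yields some $w_n\in\tilde{\Omega}_n'$ with $|w_n|\geq L':=\tanh(\tfrac{1}{2}\operatorname{arctanh}(L/2))>0$.

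Next I would extract two simultaneous limits. By Montel's theorem applied to the normal family of inner functions $B_n$, a subsequence converges uniformly on compact subsets of $\mathbb{D}$ to some $B^*\in H^\infty(\mathbb{D})$ with $\|B^*\|_\infty\leq 1$. By Blaschke's selection theorem applied to the closed subsets $\overline{\tilde{\Omega}_n'}\subset\overline{D(0,\delta_0)}$, after further subsequence extraction, $\overline{\tilde{\Omega}_n'}\to\tilde{\Omega}^*$ in the Hausdorff metric, with $\tilde{\Omega}^*$ nonempty, compact, and connected (the latter being the standard fact that a Hausdorff limit of compact connected subsets of a compact metric space is connected). Passing $w_n\to w^*$ and $B_n(0)=0$ to the limit gives $0,\,w^*\in\tilde{\Omega}^*$ with $|w^*|\geq L'>0$, so $w^*\neq 0$. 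Moreover, for any $z\in\tilde{\Omega}^*$ one can pick $z_n\in\overline{\tilde{\Omega}_n'}$ with $z_n\to z$; since $|B_n(z_n)|\leq\eta_n\to 0$ (by continuity on $\overline{\tilde{\Omega}_n'}$) and $B_n\to B^*$ uniformly on $\overline{D(0,\delta_0)}$, we deduce $B^*(z)=0$.

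I view the main obstacle as ruling out the degenerate case $B^*\equiv 0$, for which I crucially exploit $\delta_0<1$. The plan is to fix a compact connected set $K\subset\mathbb{D}$ (for instance a real segment $[-a,a]$ with $a$ close enough to $1$) whose pseudo-hyperbolic diameter strictly exceeds $\delta_0$. If $B^*\equiv 0$, then $B_n\to 0$ uniformly on $K$, forcing $K\subset\Omega_{B_n,\eta_0}$ for large $n$; as $K$ is connected, it lies in a single component of $\Omega_{B_n,\eta_0}$, whose pseudo-hyperbolic diameter is at most $\delta_{B_n,\eta_0}=\delta_0<\operatorname{diam}_\rho(K)$, a contradiction. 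Hence $B^*\not\equiv 0$, so its zero set is discrete in $\mathbb{D}$; then the connected set $\tilde{\Omega}^*$, contained in this discrete zero set, must reduce to a single point, contradicting that it contains the two distinct points $0$ and $w^*$. Therefore $L=0$, which proves $\lim_{\eta\to 0}\delta_{B,\eta}=0$.
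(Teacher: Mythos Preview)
Your argument is correct and complete, but it follows an entirely different route from the paper's. The paper gives a direct quantitative estimate: fixing a component $\Omega$ of $\Omega_{B,\eta}$ with zeros $\alpha_1,\ldots,\alpha_N$ and factoring $B=(\prod_{n=1}^N\varphi_{\alpha_n})\cdot\widehat{B}$, it bounds $|\widehat{B}|$ from below on $\Omega$ via the minimum modulus principle, and then shows that for a carefully chosen smaller $\widetilde{\eta}$ the level set $\Omega_{B,\widetilde{\eta}}\cap\Omega$ is trapped inside a union of small pseudo-hyperbolic disks around the $\alpha_n$, yielding $\delta_{B,\widetilde{\eta}}\le 2\delta_{B,\eta}/K$ with $K$ arbitrary. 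Your approach is soft: contradiction via M\"obius normalization, Montel's theorem, and Hausdorff convergence of the offending components, with the hypothesis $\delta_{B,\eta_0}<1$ entering precisely to rule out the degenerate limit $B^*\equiv 0$. The paper's method is constructive and yields an explicit rate of convergence (which, as it happens, the rest of the paper does not exploit---Theorem~B only needs $\delta_{B,\eta}\to 0$ qualitatively); yours is shorter and more conceptual. One minor remark: the M\"obius invariance of $\rho$ you use is the elementary fact that disk automorphisms are $\rho$-isometries, not the content of Lemma~\ref{keeptr}, which is about the induced persistence modules.
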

\begin{proof}
	Let 
	\[
	B(z)=\lambda\prod\limits_{n}\frac{|\alpha_{n}|}{\alpha_{n}}\left(\frac{\alpha_{n}-z}{1-\overline{\alpha_{n}}z}\right), \ \ \ \ \ \text{where} \ |\lambda|=1,
	\] 
	be an infinite or finite Blaschke product with $\eta$-weak Property $\mathfrak{H}$. Given an arbitrary  component $\Omega$ of the level set $\Omega_{B,\eta}$. Denote by $N$ the number of zeros in the component $\Omega$. By Lemma \ref{zero number}, 
	\[
	1\le N\le\frac{\ln\eta}{\ln\delta_{B,\eta}}.
	\]
	Without generality, let $\{\alpha_n\}_{n=1}^{N}$ be the zeros of $B$ lying in the component $\Omega$. Furthermore, let
	\[
	\widehat{B}(z)=\lambda\prod\limits_{n\ge N+1}\frac{|\alpha_{n}|}{\alpha_{n}}\left(\frac{\alpha_{n}-z}{1-\overline{\alpha_{n}}z}\right).
	\]
Then,
	\[
		B(z)=\left(\prod\limits_{n=1}^{N}\frac{|\alpha_{n}|}{\alpha_{n}}\left(\frac{\alpha_{n}-z}{1-\overline{\alpha_{n}}z}\right)\right)\cdot\widehat{B}(z).
	\]
For any $\xi\in\partial\Omega$, we have 
\[
	\eta=\left|B(\xi)\right|=\left(\prod\limits_{n=1}^{N}\rho(\alpha_n,\xi)\right)\cdot\left|\widehat{B}(\xi)\right|
	\le\left(\delta_{B,\eta}\right)^{N}\cdot\left|\widehat{B}(\xi)\right|,
\]
and consequently,
\begin{equation}
\left|\widehat{B}(\xi)\right|\ge\frac{\eta}{\left(\delta_{B,\eta}\right)^{N}}.
\end{equation}

Since the analytic function $\widehat{B}$ has no zeros in the set $\Omega$, it follows from the minimal module principle that
\begin{equation*}
	\inf\limits_{z\in\Omega}\left|\widehat{B}(z)\right|\ge\frac{\eta}{\left(\delta_{B,\eta}\right)^{N}}.
\end{equation*}

For any $K>\frac{1}{\eta}$, let 
\[
\theta=\frac{\delta_{B,\eta}}{K\cdot\frac{\ln\eta}{\ln\delta_{B,\eta}}}.
\]
it is obvious that 
\[\theta\le\frac{\delta_{B,\eta}}{KN}<\eta. 
\]
Furthermore, let
\begin{equation*}
	\Omega'=\{z\in\mathbb{D};\  \text{there}\ \text{is}\ \alpha_n, \ 1\le n\le N,\ \text{such}\ \text{that}\ \rho(z,\alpha_{n})<\theta\}=\bigcup\limits_{n=1}^{N}D_{\rho}(\alpha_{n}, \theta).
\end{equation*}
It is not difficult to see that $\Omega'\subseteq\Omega$. Denote
\[
\widetilde{\eta}=\frac{\eta}{K^{\ln\eta/\ln\delta_{B,\eta}}\left(\frac{\ln\eta}{\ln\delta_{B,\eta}}\right)^{\ln\eta/\ln\delta_{B,\eta}}}.
\]
Then, for any $z\in\Omega\textbackslash\Omega'$,

\begin{align*}
	\left|B(z)\right|&=~\left(\prod\limits_{n=1}^{N}\rho(\alpha_n,z)\right)\cdot\left|\widehat{B}(z)\right|
	\\
	&~\ge \theta^{N}\cdot\frac{\eta}{\left(\delta_{B,\eta}\right)^{N}}\\
	&\ge~\frac{\delta_{B,\eta}^{N}}{K^{N}\left(\frac{\ln\eta}{\ln\delta_{B,\eta}}\right)^{N}}\cdot\frac{\eta}{\left(\delta_{B,\eta}\right)^{N}}\\
	&=~\frac{\eta}{K^{N}\left(\frac{\ln\eta}{\ln\delta_{B,\eta}}\right)^{N}} \\
	&\ge~\widetilde{\eta}.
\end{align*}
Consequently, each component of the level set $\Omega_{B,\widetilde{\eta}}$ contained in $\Omega'$ must be contained in one component of $\Omega'=\bigcup\limits_{n=1}^{N}D_{\rho}(\alpha_{n}, \theta)$. Denote
	\[
\delta'=\sup\{\rho(z_1, z_2);~z_1 \ \text{and} \ z_2 \ \text{belong to the same component of} \ \Omega'\}.  
   \]
Although $\Omega'$ may have several components (see Figure \ref{fig:con_51} for example), a component of $\Omega'$ is the union of at most $N$ pseudo-hyperbolic disks $D_{\rho}(\alpha_{n}, \theta)$.
\begin{figure}[htbp]
	\centering
	\includegraphics[width=0.8\textwidth]{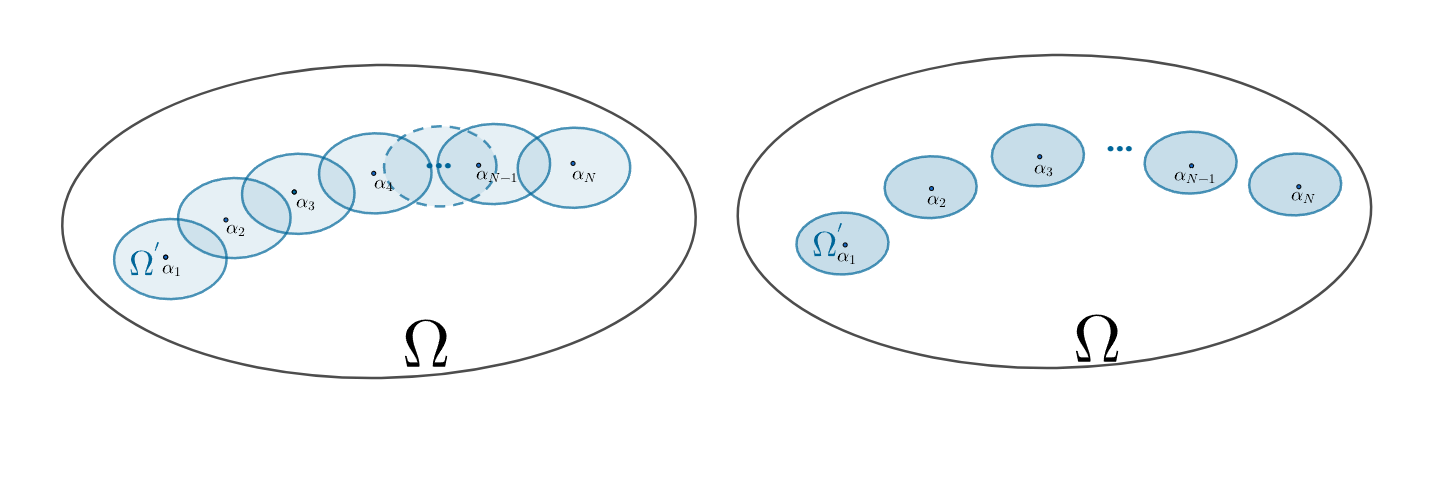}
	\caption{Components of $\Omega'$}
	\label{fig:con_51}
\end{figure} 
We always have
\begin{equation*}
	\delta'\le2N\theta\le2N\cdot\frac{\delta_{B,\eta}}{KN}=\frac{2\delta_{B,\eta}}{K}.
\end{equation*}
Then, we obtain that
\[
\delta_{B,\widetilde{\eta}}\le\delta'\le\frac{2\delta_{B,\eta}}{K}.
\]
It is easy to see that as $K\rightarrow+\infty$, $\widetilde{\eta}\rightarrow0$ and $\delta_{B,\widetilde{\eta}}\rightarrow0$. This completes the proof.
\end{proof}

\begin{lemma}\label{ineq3}
	Let $x\in(0,1)$ and $y>1$. Then,
	\begin{equation}
		\frac{1-x}{1-x^{1/y}}<y.
	\end{equation} 
	Moreover,
	\begin{equation}
		\frac{1+x^{1/y}}{1-x^{1/y}}\cdot\frac{1-x}{1+x}< y^2.
	\end{equation}
\end{lemma}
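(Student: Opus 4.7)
My plan is to reduce both inequalities to the cleaner variable $u = x^{1/y}$, where $u \in (0,1)$ and $x = u^y$. In these terms the first inequality becomes
\[
\frac{1-u^y}{1-u} < y,
\]
and the second becomes
\[
\frac{1+u}{1-u}\cdot\frac{1-u^y}{1+u^y} \;=\; \frac{1-u^y}{1-u}\cdot\frac{1+u}{1+u^y} \;<\; y^{2}.
\]

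For the first inequality I would apply the Mean Value Theorem to $f(t)=t^{y}$ on $[u,1]$: there exists $\xi\in(u,1)$ with
\[
\frac{1-u^{y}}{1-u} \;=\; y\,\xi^{y-1}.
\]
Since $\xi<1$ and $y-1>0$, we have $\xi^{y-1}<1$, giving strict inequality $\frac{1-u^{y}}{1-u} < y$. (An equivalent path is to write $1-u^{y}=\int_{u}^{1} y\,t^{y-1}\,dt$ and bound $t^{y-1}<1$ on $(u,1)$; either way the step is completely routine.)

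For the second inequality, the point is to show that the extra factor $\frac{1+u}{1+u^{y}}$ is itself bounded by the quantity already controlled by the first inequality. Concretely, I would compare
\[
\frac{(1+u)(1-u)}{(1-u^{y})(1+u^{y})} \;=\; \frac{1-u^{2}}{1-u^{2y}}.
\]
Because $u\in(0,1)$ and $y>1$ imply $2y>2$ and hence $u^{2y}<u^{2}$, the denominator $1-u^{2y}$ strictly exceeds $1-u^{2}$. Thus $\frac{1+u}{1+u^{y}}<\frac{1-u^{y}}{1-u}<y$, and multiplying by the first-inequality bound gives the required $<y^{2}$.

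There is no serious obstacle here; both claims are elementary calculus once the substitution $u=x^{1/y}$ is made. The only judgment call is the comparison trick in the second inequality — rather than bounding $\frac{1+u}{1+u^{y}}$ directly by $y$ (which would require a slightly more delicate argument involving the behavior as $y\downarrow 1$), it is far cleaner to bound it by the already-handled quantity $\frac{1-u^{y}}{1-u}$ via the symmetric identity $(1\pm u)(1\mp u)=1-u^{2}$ applied to numerator and denominator. This yields the factor-of-$y^{2}$ bound with no additional work, and is consistent with the form in which the authors likely want to apply the lemma (together with the $\delta_{B,\eta}\to 0$ estimate from Lemma \ref{limto0}).
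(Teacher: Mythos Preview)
Your proof is correct and, after the change of variable $u=x^{1/y}$, is essentially the same as the paper's: the paper proves the first inequality by showing $f(x)=1-x-y(1-x^{1/y})$ is increasing with $f(1)=0$ (a monotonicity argument equivalent to your MVT step), and for the second inequality it rewrites the product as $\frac{1-x^{2/y}}{1-x^{2}}\cdot\bigl(\frac{1-x}{1-x^{1/y}}\bigr)^{2}$, which in your variables is exactly $\frac{1-u^{2}}{1-u^{2y}}\cdot\bigl(\frac{1-u^{y}}{1-u}\bigr)^{2}$---i.e., your comparison $\frac{1+u}{1+u^{y}}<\frac{1-u^{y}}{1-u}$ multiplied by the first bound.
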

\begin{proof}
	Consider the function $f(x)=1-x-y(1-x^{1/y})$ with $x$ as the variable on $(0,1)$. We have
	\[
	f'(x)=-1+y\cdot\frac{1}{y}\cdot x^{1/y-1}=x^{1/y-1}-1.
	\]
	It is easy to see that $f'(x)>0$ for any $x\in(0,1)$, and consequently $f(x)$ is a monotonically increasing function on $(0,1)$. Then for any $x\in(0,1)$, $f(x)<f(1)=0$ and hence 
	\begin{equation}\label{inequ}
		\frac{1-x}{1-x^{1/y}}< y.
	\end{equation}
	Furthermore, 
	\begin{align*}
		\frac{1+x^{1/y}}{1-x^{1/y}}\cdot\frac{1-x}{1+x}
		=&~\left(\frac{1+x^{1/y}}{1-x^{1/y}}\cdot\frac{1-x^{1/y}}{1-x^{1/y}}\right)\left(\cdot\frac{1-x}{1+x}\cdot\frac{1-x}{1-x}\right)\\
		=&~\frac{1-x^{2/ y}}{\left(1-x^{1/y}\right)^{2}}\cdot\frac{(1-x)^{2}}{1-x^2}\\
		=&~\frac{1-x^{2/ y}}{1-x^2}\cdot\left(\frac{1-x}{1-x^{1/y}}\right)^{2}.
	\end{align*}
	Since $x\in(0,1)$ and $y>1$, it is obvious that $\frac{1-x^{2/y}}{1-x^2}<1$. Then, by inequality (\ref{inequ}), we have
	\begin{align*}
		\frac{1+x^{1/y}}{1-x^{1/y}}\cdot\frac{1-x}{1+x}
		=&~\frac{1-x^{2/y}}{1-x^2}\cdot\left(\frac{1-x}{1-x^{1/y}}\right)^{2} \\
		<&~\left(\frac{1-x}{1-x^{1/y}}\right)^{2} \\
		<&~y^2.
	\end{align*}
		This completes the proof.
	\end{proof}

\begin{lemma}\label{ln1x}
	For any $x\in(0,1)$, we have $\ln(1-x)>\frac{-x}{1-x}$.
\end{lemma}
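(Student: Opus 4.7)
The plan is to set $f(x) = \ln(1-x) + \frac{x}{1-x}$ on $(0,1)$ and show that $f(x) > 0$ via a monotonicity argument starting from the boundary value at $x=0$. First I would check $f(0)=0$, which is immediate. Next I would compute
\[
f'(x) = \frac{-1}{1-x} + \frac{(1-x) + x}{(1-x)^2} = \frac{-(1-x) + 1}{(1-x)^2} = \frac{x}{(1-x)^2},
\]
which is strictly positive on $(0,1)$. Therefore $f$ is strictly increasing on $(0,1)$, and combined with $f(0)=0$ this yields $f(x) > 0$, i.e., $\ln(1-x) > -\frac{x}{1-x}$.

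Alternatively, one can reduce to the standard inequality $\ln t < t-1$ for $t > 1$ by the substitution $t = \frac{1}{1-x}$, which gives $-\ln(1-x) = \ln t < t - 1 = \frac{x}{1-x}$. Either route avoids any genuine obstacle: this is essentially a one-line consequence of calculus, and the only thing to be careful about is that the differentiation step yields a manifestly positive derivative on the open interval. Given how short the statement is, I expect the author's proof to be just the monotonicity argument above in a couple of lines.
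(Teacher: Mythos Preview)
Your proof is correct and matches the paper's approach exactly: define $f(x)=\ln(1-x)+\frac{x}{1-x}$, compute $f'(x)=\frac{x}{(1-x)^2}>0$ on $(0,1)$, and conclude $f(x)>f(0)=0$.
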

\begin{proof}
	Let $f(x)=\ln(1-x)+\frac{x}{1-x}$ for  $x\in(0,1)$. Then,
	\begin{equation*}
		f'(x)=\frac{x}{(1-x)^{2}}.
	\end{equation*}
	It is easy to see that $f'(x)>0$ for any $x\in(0,1)$, and consequently the function $f(x)$ is a monotonically increasing function on $(0,1)$. Then for any $x\in(0,1)$, $f(x)>f(0)=0$ and hence 
	\[
	\ln(1-x)>\frac{-x}{1-x}.
	\]
\end{proof}

\begin{lemma}\label{ineq2}
	Given any $\delta_0\in(0,1/\mathrm{e})$. Let 
	\[
	\gamma=1+2\frac{1}{\sqrt{-\ln\delta_0}} \ \ \ \ \text{and} \ \ \ \ \delta=\delta_0 ^{1-{1}/{\sqrt{-\ln\delta_0}}}. 
	\]
	If $\rho(x,y)\le\delta_0$ and $\rho(x,z)\ge\delta$, then  $\rho(y,z)\ge\rho(x,z)^{\gamma}$.
\end{lemma}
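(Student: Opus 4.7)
The plan is to reduce the inequality $\rho(y, z) \ge \rho(x, z)^{\gamma}$ to a one-variable problem via the pseudo-hyperbolic triangle inequality, and then establish the resulting bound by a concavity argument. First, apply the standard triangle inequality
\[
\rho(y, z) \ge \frac{\rho(x, z) - \rho(x, y)}{1 - \rho(x, z)\rho(x, y)},
\]
which is valid here because $\rho(x, z) \ge \delta > \delta_0 \ge \rho(x, y)$ (note $\delta > \delta_0$ since $1 - 1/\sqrt{-\ln \delta_0} < 1$ for $\delta_0 \in (0, 1/\mathrm{e})$). The right-hand side is monotonically decreasing in $\rho(x, y)$, so the worst case $\rho(x, y) = \delta_0$ yields $\rho(y, z) \ge (r - \delta_0)/(1 - r \delta_0)$, where $r := \rho(x, z) \in [\delta, 1)$. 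The lemma thus reduces to proving $(r - \delta_0)/(1 - r \delta_0) \ge r^{\gamma}$ for all such $r$.

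Introduce $L := -\ln \delta_0 > 1$ and $s_0 := 1/\sqrt{L}$, so that $\gamma = 1 + 2 s_0$ and $\delta = \mathrm{e}^{-(1 - s_0) L}$. Under the change of variables $r = \mathrm{e}^{-u}$, the range $r \in [\delta, 1)$ corresponds to $u \in (0, (1 - s_0) L]$, and the target inequality becomes
\[
\mathrm{e}^{-u} \cdot \frac{1 - \mathrm{e}^{-(L - u)}}{1 - \mathrm{e}^{-(L + u)}} \ge \mathrm{e}^{-\gamma u}.
\]
The elementary fact that $x \mapsto (1 - \mathrm{e}^{-x})/x$ is strictly decreasing on $(0, \infty)$ gives $(1 - \mathrm{e}^{-(L - u)})/(1 - \mathrm{e}^{-(L + u)}) \ge (L - u)/(L + u)$, so it is enough to show
\[
\phi(u) := 2 s_0 u - \ln \frac{L + u}{L - u} \ge 0 \qquad \text{on } [0, (1 - s_0) L].
\]

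A direct computation gives $\phi''(u) = -4 L u/(L^2 - u^2)^2 < 0$ on $(0, L)$, so $\phi$ is strictly concave on $[0, L)$. Since $\phi(0) = 0$, concavity on $[0, (1 - s_0) L]$ implies $\phi(u) \ge (1 - \lambda)\phi((1 - s_0) L)$ whenever $u = (1 - \lambda)(1 - s_0) L$ with $\lambda \in [0,1]$, so it remains only to verify $\phi((1 - s_0) L) \ge 0$. Using $s_0 = 1/\sqrt{L}$, direct evaluation simplifies this to $2(\sqrt{L} - 1) - \ln(2\sqrt{L} - 1) \ge 0$; setting $v := \sqrt{L} > 1$, this is $g(v) := 2(v - 1) - \ln(2 v - 1) \ge 0$ for $v > 1$, which follows immediately from $g(1) = 0$ together with $g'(v) = 4(v - 1)/(2 v - 1) > 0$.

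The main obstacle is finding a clean intermediate reduction: attacking $(r - \delta_0)/(1 - r \delta_0) \ge r^{\gamma}$ directly leads to an auxiliary function whose derivative has several competing terms and whose sign is awkward to analyze across the full range $[\delta, 1)$. The two-step passage via $r = \mathrm{e}^{-u}$ and the monotonicity of $(1 - \mathrm{e}^{-x})/x$ produces a strictly concave $\phi$ whose non-negativity reduces to a single boundary evaluation, and the specific calibration $\gamma = 1 + 2/\sqrt{-\ln \delta_0}$ with $\delta = \delta_0^{1 - 1/\sqrt{-\ln \delta_0}}$ is precisely what makes that boundary value non-negative.
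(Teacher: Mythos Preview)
Your proof is correct and follows the same overall reduction as the paper: both start from the pseudo-hyperbolic triangle inequality, set $r=\rho(x,z)$, and then pass to the intermediate bound $(1-e^{-(L-u)})/(1-e^{-(L+u)})\ge (L-u)/(L+u)$ (your monotonicity of $(1-e^{-x})/x$ is exactly the first inequality of the paper's Lemma~\ref{ineq3} in different notation; with $r=\delta_0^{\alpha}$ the paper's $\alpha$ is your $u/L$). The only genuine difference is the final sub-inequality $\phi(u)=2s_0u-\ln\frac{L+u}{L-u}\ge 0$: the paper proves it pointwise via Lemma~\ref{ln1x}, namely $\ln(1-x)>-x/(1-x)$ applied with $x=\frac{2\alpha}{1+\alpha}$, together with the range bound $\frac{1}{1-\alpha}\le\sqrt{L}$; you instead observe that $\phi$ is strictly concave with $\phi(0)=0$ and verify the right endpoint directly. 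Both approaches are elementary; yours is self-contained and avoids the two preparatory lemmas, while the paper's route makes the role of the calibration $\alpha\le 1-1/\sqrt{L}$ transparent at the step where $\frac{1}{1-\alpha}\le\sqrt{L}$ is invoked.
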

\begin{proof}
	Firstly, we have
	\begin{align*}
		\rho(y,z)&\ge\frac{\rho(x,z)-\rho(x,y)}{1-\rho(x,z)\rho(x,y)}\\
		&\ge\frac{\rho(x,z)-\delta_0}{1-\delta_0 \rho(x,z)}.
	\end{align*}
	
	Since $\rho(x,z)\ge\delta=\delta_0 ^{1-{1}/{\sqrt{-\ln\delta_0}}}$, there is $\alpha\in(0, 1-{1}/{\sqrt{-\ln\delta_0}})\subseteq(0,1)$ such that $\rho(x,z)=\delta_0 ^{\alpha}$. Then, by Lemma \ref{ineq3}, we have
	\begin{align*}
		\frac{\rho(x,z)-\delta_0}{1-\delta_0 \rho(x,z)}=&~\frac{\delta_0 ^{\alpha}-\delta_0}{1-\delta_0 \delta_0 ^{\alpha}}\\
		=&~\delta_0 ^{\alpha}\cdot\frac{1-\delta_0 ^{1-\alpha}}{1-\delta_0 ^{1+\alpha}}\\
		=&~\delta_0 ^{\alpha}\cdot\frac{1}{\frac{1-\delta_0 ^{1+\alpha}}{1-\left(\delta_0 ^{1+\alpha}\right)^{(1-\alpha)/(1+\alpha)}}}\\
		\ge&~\delta_0 ^{\alpha}\cdot\frac{1-\alpha}{1+\alpha}.
	\end{align*}
	
	Now, we want to show for  $\alpha\in(0, 1-{1}/{\sqrt{-\ln\delta_0}})\subseteq(0,1)$, 
	\begin{equation*}
		\delta_0 ^{\alpha}\cdot\frac{1-\alpha}{1+\alpha}\ge\delta_0 ^{\alpha\gamma}.
	\end{equation*}
	Since $\alpha\le1-\frac{1}{\sqrt{-\ln\delta_0}}$, we have $\frac{1}{1-\alpha}\le\sqrt{-\ln\delta_0}$. Then, by Lemma \ref{ln1x}, we have
	\begin{align*}
		\ln\left(\frac{1-\alpha}{1+\alpha}\right)&=~\ln\left(1-\frac{2\alpha}{1+\alpha}\right)\\
		&\ge ~\frac{-\frac{2\alpha}{1+\alpha}}{1-\frac{2\alpha}{1+\alpha}}\\
		&=~\frac{-2\alpha}{1-\alpha}\\
		&\ge~-2\alpha\sqrt{-\ln\delta_0}\\
		&=~\frac{2\alpha}{\sqrt{-\ln\delta_0}}\cdot\ln\delta_0.
	\end{align*}
	
	Consequently, one can see that 
	\[
	\frac{1-\alpha}{1+\alpha}\ge\delta_0 ^{{2\alpha}/{\sqrt{-\ln\delta_0}}}=\delta_0 ^{\alpha(\gamma-1)}.
	\]
	Therefore,
	\begin{align*}
		\rho(y,z)&\ge~\frac{\rho(x,z)-\delta_0}{1-\delta_0 \rho(x,z)}\\
		&\ge~\delta_0 ^{\alpha}\cdot\frac{1-\alpha}{1+\alpha}\\
		&\ge~\delta_0 ^{\alpha}\cdot\delta_0 ^{\alpha(\gamma-1)}   \\
		&=~\delta_0 ^{\alpha\gamma}\\
		&=~\rho(x,z)^{\gamma}.
	\end{align*}
This finishes the proof.
\end{proof}

\begin{remark}
	Motivated by  Earl \cite{JP1970},  Nestoridis proposed a lemma  in \cite{Ne80}: "Let $\gamma>1$ (Nestoridis use the notation $\epsilon$ in his paper) and $\delta_0 <\frac{\gamma-1}{\gamma+1}$. Then there exists $\delta<1$ such that the inequalities $\rho(x,y)\le\delta_0$ and $\rho(x,z)\ge\delta$ imply $\rho(y,z)\ge\rho(x,z)^{\gamma}$." However, neither  Earl nor  Nestoridis provided a specific explanation of the relationship between $\delta$ and $\delta_0$. Inspired by their work, we propose Lemma \ref{ineq2}. We mainly want to demonstrate that $\delta_0 \to 0$ implies $\delta\to0$ and $\gamma\to1^{+}$, which is useful to prove Theorem B.
\end{remark}

Review that each  Blaschke product $B$ with strong Property $\mathfrak{B}$ induces a persistence module $\mathbb{B}=(V^{|B|},~\pi)=((V^{|B|})_0,~\pi)$. Next, we will demonstrate the relation between the supremum norm of Blaschke products with strong Property $\mathfrak{B}$ and the interleaving distance of the induced persistence modules, which plays an important role in the proof of Theorem B.

\begin{lemma}\label{dintDelta}
	Let $B$ and $\widetilde{B}$ be two Blaschke products with $\eta$-strong Property $\mathfrak{B}$. Suppose that $\delta_0=\min\{\delta_{B,\eta},\delta_{\widetilde{B},\eta}\}<1/\mathrm{e}$ and $\delta=\delta_{0} ^{1-{1}/{\sqrt{-\ln\delta_{0}}}}<1-2\eta$. Let 
	\[
	\gamma=1+\frac{2}{\sqrt{-\ln\delta_{0}}} \ \ \ \text{and} \ \ \ \ 
	T_0=\ln\frac{1+(\delta+2\eta)}{1-(\delta+2\eta)}.
	\]
	Then, $\|\widetilde{B}-B\|_{\infty}<\eta$ implies
	\begin{equation*}
		d_{int}(\mathbb{B},\widetilde{\mathbb{B}})\le\Delta=\max\left\{T_0, ~2\ln\gamma\right\}.
	\end{equation*} 
\end{lemma}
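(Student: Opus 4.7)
The plan is to construct explicit $\Delta$-interleaving morphisms $F\colon\mathbb{B}\to\widetilde{\mathbb{B}}[\Delta]$ and $G\colon\widetilde{\mathbb{B}}\to\mathbb{B}[\Delta]$ componentwise, using the zero pairing supplied by Lemma~\ref{rhoalphabeta}. Since both $B$ and $\widetilde{B}$ have $\eta$-strong Property $\mathfrak{B}$, they both have $\eta$-weak Property $\mathfrak{H}$, and Lemma~\ref{rhoalphabeta} (applied in whichever direction realizes the minimum $\delta_0$) produces a bijection $\{\alpha_i\}\leftrightarrow\{\beta_i\}$ between the zeros of $B$ and $\widetilde{B}$ with $\rho(\alpha_i,\beta_i)<\delta_0$ for every $i$. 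By Lemma~\ref{sc-components}, every component of $\Sigma_{B,t}$ contains at least one zero of $B$, so each class in $V^{|B|}_t$ is spanned by generators of the form $[\alpha_i]^{\mathbb{B}}_t$, and similarly for $\widetilde{B}$.

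On these generators I will set $F_t([\alpha_i]^{\mathbb{B}}_t)$ to be the class of the component of $\Sigma_{\widetilde{B},t+\Delta}$ containing $\beta_i$, and define $G_t$ symmetrically by $[\beta_i]^{\widetilde{\mathbb{B}}}_t\mapsto[\alpha_i]^{\mathbb{B}}_{t+\Delta}$. Everything reduces to the well-definedness claim: if $\alpha_i$ and $\alpha_j$ lie in the same component of $\Sigma_{B,t}$, then $\beta_i$ and $\beta_j$ lie in the same component of $\Sigma_{\widetilde{B},t+\Delta}$, together with the symmetric statement for $G$. To prove it I take any path $\gamma\subseteq\Sigma_{B,t}$ from $\alpha_i$ to $\alpha_j$ and concatenate it with pseudo-hyperbolic geodesic segments $\ell_i$ from $\beta_i$ to $\alpha_i$ and $\ell_j$ from $\alpha_j$ to $\beta_j$; the claim will follow once I verify that the concatenation $\ell_i*\gamma*\ell_j$ lies entirely in $\Sigma_{\widetilde{B},t+\Delta}$.

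Along $\ell_i$ every point $w$ satisfies $\rho(w,\beta_i)\le\delta_0$, so $|\widetilde{B}(w)|\le\rho(\beta_i,w)\le\delta_0<\delta+2\eta=\theta(T_0)\le\theta(t+\Delta)$, and $\ell_j$ is handled identically. The delicate step is controlling $|\widetilde{B}(w)|$ at points $w$ of $\gamma$, where only $|B(w)|<\theta(t)$ is known, and here I split into two regimes. If $\rho(\beta_n,w)\ge\delta$ for every zero $\beta_n$ of $\widetilde{B}$, then Lemma~\ref{ineq2} applied with $x=\beta_n$, $y=\alpha_n$, $z=w$ yields $\rho(\alpha_n,w)\ge\rho(\beta_n,w)^{\gamma}$ for every $n$; taking the product over $n$ gives $|B(w)|\ge|\widetilde{B}(w)|^{\gamma}$, hence $|\widetilde{B}(w)|\le|B(w)|^{1/\gamma}<\theta(t)^{1/\gamma}$, and Lemma~\ref{ineq3}(2) rewritten as $t(\theta(t)^{1/\gamma})-t<2\ln\gamma$ then forces $|\widetilde{B}(w)|<\theta(t+2\ln\gamma)\le\theta(t+\Delta)$. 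In the complementary regime there exists $n$ with $\rho(\beta_n,w)<\delta$, and the trivial product bound $|\widetilde{B}(w)|\le\rho(\beta_n,w)<\delta<\delta+2\eta=\theta(T_0)\le\theta(t+\Delta)$ again places $w$ in $\Sigma_{\widetilde{B},t+\Delta}$. Thus the shift $\Delta=\max\{T_0,2\ln\gamma\}$ is exactly what is needed to absorb both regimes, and the well-definedness of $G$ follows by swapping $B\leftrightarrow\widetilde{B}$ and $\alpha\leftrightarrow\beta$.

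It remains to check that $F$ and $G$ are persistence morphisms and that they satisfy the $\Delta$-interleaving diagrams. Compatibility of $F_t$ with the persistence maps is automatic: the inclusion-induced map on $H_0$ sends the class of a component of $\Sigma_{B,s}$ to the class of its containing component in $\Sigma_{B,t}$, and this matches the inclusion-induced map for $\widetilde{B}$ under the $\alpha\mapsto\beta$ pairing. The two interleaving compositions act on representatives by $\alpha_i\mapsto\beta_i\mapsto\alpha_i$ and $\beta_i\mapsto\alpha_i\mapsto\beta_i$, which is precisely the action of the shift morphisms $\Phi^{2\Delta}_{\mathbb{B}}$ and $\Phi^{2\Delta}_{\widetilde{\mathbb{B}}}$ on those representatives, so $(F,G)$ is a $\Delta$-interleaving and $d_{int}(\mathbb{B},\widetilde{\mathbb{B}})\le\Delta$. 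The main obstacle I anticipate is the case split in the third paragraph: the shifts $T_0$ and $2\ln\gamma$ arise from genuinely different phenomena (being close to some zero versus being far from every zero), and one must verify that the constants produced by Lemmas~\ref{ineq2} and~\ref{ineq3} align so that a single $\max$ handles every parameter $t>0$ without leaving any residual regime.
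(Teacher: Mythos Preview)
Your argument is correct and reaches the same $\Delta$-interleaving, but the route differs from the paper's in one substantive way. The paper splits on the \emph{level} $t$: for $t\ge \ln\frac{1+(\delta+\eta)}{1-(\delta+\eta)}$ it works on the \emph{boundary} of a component $\Omega$ of $\Sigma_{B,t}$, where $|B(\xi)|=\theta(t)$ exactly, obtains $\rho(\beta_k,\xi)\ge |\widetilde{B}(\xi)|\ge\theta(t)-\eta\ge\delta$ there, applies Lemma~\ref{ineq2} on $\partial\Omega$ to get $|\widetilde{B}(\xi)|\le\theta(t)^{1/\gamma}$, and then pushes this to the interior via the maximum modulus principle; for small $t$ it uses the cruder bound $|\widetilde{B}(\xi)|\le|B(\xi)|+\eta<\delta+2\eta$ on $\partial\Omega$ and again invokes maximum modulus. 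Your argument instead splits on the \emph{point} $w$ along an explicit connecting path --- either $\rho(\beta_n,w)\ge\delta$ for all $n$ (where Lemma~\ref{ineq2} applies directly to give $|B(w)|\ge|\widetilde{B}(w)|^{\gamma}$), or $\rho(\beta_n,w)<\delta$ for some $n$ (where $|\widetilde{B}(w)|<\delta$ trivially) --- and never touches the maximum modulus principle. Your approach is slightly more elementary and makes the role of the two constants $T_0$ and $2\ln\gamma$ transparent as the worst-case shifts for the two spatial regimes; the paper's approach has the advantage of showing the stronger containment ``each component of $\Sigma_{B,t}$ lies in a single component of $\Sigma_{\widetilde{B},t+\Delta}$'' rather than merely tracking a path, and it separates out as Claim~1 the fact that $\alpha_i$ and $\beta_i$ land in the same component of both level sets once $t\ge\ln\frac{1+\delta}{1-\delta}$, which you absorb into the geodesic segments $\ell_i,\ell_j$. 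The concern you flag at the end is not a real obstacle: your two regimes exhaust all $w$ on the path for every $t>0$, and each regime is controlled by one of the two terms in $\Delta=\max\{T_0,2\ln\gamma\}$, so nothing is left over.
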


\begin{proof}
	Firstly, we claim three results as follows.
	
	\vspace{2mm}
	
	\textbf{Claim 1.} We could arrange the zero points of $B$ and $\widetilde{B}$ as $\{\alpha_{i}\}_{i}$ and $\{\beta_{i}\}_{i}$ respectively, such that $\sup\limits_{i\in \Lambda}\rho(\alpha_i,\beta_i)<\delta_0$ and for any $t\ge\ln\frac{1+\delta}{1-\delta}$, 
	each pair of $\alpha_i$ and $\beta_{i}$ are contained in the same component of the level set $\Sigma_{B, t}$ and in the same component of the level set $\Sigma_{\widetilde{B},t}$.
	
	\vspace{3mm}
	
	Denote by  $\{\alpha_{i}\}_{i\in\Lambda}$ and $\{\beta_{i}\}_{i\in\Lambda}$ the zero sets of $B$ and $\widetilde{B}$, where the index set $\Lambda$ is finite or countably  infinite. Since $B$ is a Blaschke product with $\eta$-strong Property $\mathfrak{B}$ and $\|\widetilde{B}-B\|_{\infty}<\eta$, it follows from Lemma \ref{rhoalphabeta} that there is a bijection between $\sigma: \Lambda\rightarrow\Lambda$, such that $\sup\limits_{i\in\Lambda}\rho(\alpha_i,\beta_{\sigma(i)})<\delta_{B,\eta}$ and each pair of $\alpha_i$ and $\beta_{\sigma(i)}$ are contained in the same component of the level set $\Omega_{B,\eta}$. 
	
	Given any $\theta\ge\delta_{B,\eta} ^{1-{1}/{\sqrt{-\ln\delta_{B,\eta}}}}$. It is worthy  noticing that,  by Lemma \ref{zero number},
	\[
	\theta\ge\delta_{B,\eta} ^{1-{1}/{\sqrt{-\ln\delta_{B,\eta}}}}>\delta_{B,\eta}\ge\eta.
	\] 
	Then, each pair of $\alpha_i$ and $\beta_{\sigma(i)}$ are also contained in the same component of the level set $\Omega_{B,\theta}$. 
	
	Let $\widetilde{\Omega}$ be an arbitrary component of the level set $\Omega_{\widetilde{B},\theta}$. Suppose that the zero point $\beta_{\sigma(i)}$ belongs to $\widetilde{\Omega}$.  For any $\xi\in \partial\widetilde{\Omega}$, we have
	\begin{equation*}
		\rho(\beta_{\sigma(i)},\xi)\ge|\widetilde{B}(\xi)|=\theta\ge\delta_{B,\eta} ^{1-{1}/{\sqrt{-\ln\delta_{B,\eta}}}}>\delta_{B,\eta}>\rho(\alpha_i,\beta_{\sigma(i)}).
	\end{equation*}
	Since $\widetilde{\Omega}$ is a simple domain, $\alpha_i$ also belongs to $\widetilde{\Omega}$. Thus, for any $\theta\ge\delta_{B,\eta} ^{1-{1}/{\sqrt{-\ln\delta_{B,\eta}}}}$, each pair of $\alpha_i$ and $\beta_{\sigma(i)}$ are contained in the same component of the level set $\Omega_{B, \theta}$ and in the same component of the level set $\Omega_{\widetilde{B},\theta}$. That means, for any 
	\[
	t\ge\ln\frac{1+\delta_{B,\eta} ^{1-{1}/{\sqrt{-\ln\delta_{B,\eta}}}}}{1-\delta_{B,\eta} ^{1-{1}/{\sqrt{-\ln\delta_{B,\eta}}}}},
	\]
	each pair of $\alpha_i$ and $\beta_{\sigma(i)}$ are contained in the same component of the level set $\Sigma_{B, t}$ and in the same component of the level set $\Sigma_{\widetilde{B},t}$.

    Similarly, since $\widetilde{B}$ is a Blaschke product with $\eta$-strong Property $\mathfrak{B}$ and $\|\widetilde{B}-B\|_{\infty}<\eta$, there is a bijection  $\widetilde{\sigma}: \Lambda\rightarrow\Lambda$, such that $\sup\limits_{i\in\Lambda}\rho(\alpha_i,\beta_{\widetilde{\sigma}(i)})<\delta_{\widetilde{B},\eta}$ and for any 
    \[
    t\ge\ln\frac{1+\delta_{\widetilde{B},\eta} ^{1-{1}/{\sqrt{-\ln\delta_{\widetilde{B},\eta}}}}}{1-\delta_{\widetilde{B},\eta} ^{1-{1}/{\sqrt{-\ln\delta_{\widetilde{B},\eta}}}}},
    \]
    each pair of $\alpha_i$ and $\beta_{\widetilde{\sigma}(i)}$ are contained in the same component of the level sets $\Sigma_{\widetilde{B},t}$ and in the same component of the level set $\Sigma_{B, t}$.

    If $\delta_{B,\eta}\le\delta_{\widetilde{B},\eta}$,  we rearrange the zero points of $\widetilde{B}$ so that the $i$-th zero point is $\beta_{\sigma(i)}$, we also call it $\beta_i$; and if $\delta_{B,\eta}>\delta_{\widetilde{B},\eta}$, we rearrange the zero points of $\widetilde{B}$ so that the $i$-th zero point is $\beta_{\widetilde{\sigma}(i)}$, we also call it $\beta_i$. Then, 
    \[
    \sup\limits_{i\in \Lambda}\rho(\alpha_i,\beta_i)<\min\{\delta_{B,\eta},\delta_{\widetilde{B},\eta}\}=\delta_0,
    \]
    and for any  $t\ge\ln\frac{1+\delta}{1-\delta}$, 
    each pair of $\alpha_i$ and $\beta_{i}$ are contained in the same component of the level set $\Sigma_{B, t}$ and in the same component of the level set $\Sigma_{\widetilde{B},t}$. This proves the Claim 1.
    
	\vspace{2mm}
	
	From now on, we denote the zero points of $B$ and $\widetilde{B}$ by $\{\alpha_{i}\}_{i\in\Lambda}$ and $\{\beta_{i}\}_{i\in\Lambda}$ arranged as stated in Claim 1.
	  
	\vspace{2mm}
	
	\textbf{Claim 2.}   Given any  $t\ge\ln\frac{1+(\delta+\eta)}{1-(\delta+\eta)}$.
		\begin{enumerate}
		\item[(1)] If $\alpha_i$ and $\alpha_j$ are contained in the same component of the level set $\Sigma_{B,t}$, then $\beta_{i}$ and $\beta_j$ are contained in the same component of the level set $\Sigma_{\widetilde{B},t+2\ln\gamma}$.
		\item[(2)] 	 If $\beta_m$ and $\beta_n$ are contained in the same component of the level set $\Sigma_{\widetilde{B},t}$, then $\alpha_m$ and $\alpha_n$ are contained in the same component of the level set $\Sigma_{B,t+2\ln\gamma}$.
	\end{enumerate}
	
	\vspace{2mm}
	
	Denote
	\[
	\theta=\theta(t)=\frac{\mathrm{e}^t-1}{\mathrm{e}^t+1}.  \ \ \ \  \ \left(\text{i.e.}, \ \ t=\ln\frac{1+\theta}{1-\theta} \right)
	\]
	Then, $\theta\in[\delta+\eta, 1)$. Let $\Omega$ be an arbitrary component of the level set $\Omega_{B,\theta}$. Notice that 
	\[
	\gamma=\gamma(\eta)=1+\frac{2}{\sqrt{-\ln\delta_{0}}}
	\]
	and for any $\xi\in\partial\Omega$ and any $k\in\Lambda$,
	\[
	\rho(\alpha_k,\beta_k)<\delta_0 \ \ \ \ \text{and} \]
	\[ \ \ \  \
	\rho(\beta_k,\xi)\ge|\widetilde{B}(\xi)|\ge |B(\xi)|-|\widetilde{B}(\xi)-B(\xi)|\ge  \theta-\eta\ge\delta=\delta_{0} ^{1-{1}/{\sqrt{-\ln\delta_{0}}}}.
	\]
    By Lemma \ref{ineq2}, we have for any $k\in\Lambda$,
	\begin{equation*}
		\rho(\alpha_k ,\xi)\ge\rho(\beta_k ,\xi)^{\gamma}.
	\end{equation*} 
	Furthermore, one can see that for any $\xi\in\partial\Omega$,
	\begin{equation*}
		|\widetilde{B}(\xi)|=\prod\limits_{k\in\Lambda}\rho(\beta_k,\xi)\le\prod\limits_{k\in\Lambda}\rho(\alpha_k,\xi)^{1/\gamma}=|B(\xi)|^{1/\gamma}=\theta^{1/\gamma}.
	\end{equation*}
	By the maximum modulus principle, $|\widetilde{B}(z)|\le\theta^{1/\gamma}$ holds for every $z\in\Omega$, which means the component $\Omega$ is contained in a component of the level set $\Omega_{\widetilde{B},\theta^{1/\gamma}}$. 

	Let 
	\[t'=\ln\frac{1+\theta^{1/\gamma}}{1-\theta^{1/\gamma}}.
	\] 
	By Lemma \ref{ineq3}, we have
    \begin{align*}
	t'-t&=~\ln\frac{1+\theta^{1/\gamma}}{1-\theta^{1/\gamma}}-\ln\frac{1+\theta}{1-\theta}\\
	&=~\ln\frac{1+\theta^{1/\gamma}}{1-\theta^{1/\gamma}}\cdot\frac{1-\theta}{1+\theta}\\
	&\le~\ln\gamma^{2}\\
	&=~2\ln\gamma,
    \end{align*}
	Then, each component of the level set $\Sigma_{B,t}$ is contained in a component of the level set $\Sigma_{\widetilde{B},t+2\ln\gamma}$. Since $t\ge\ln\frac{1+(\delta+\eta)}{1-(\delta+\eta)}\ge\ln\frac{1+\delta}{1-\delta}$, it follows from Claim 1 that $\alpha_i$ and $\alpha_j$ being in the same component of the level set $\Sigma_{B,t}$ implies $\beta_{i}$ and $\beta_j$ being in the same component of the level set $\Sigma_{\widetilde{B},t+2\ln\gamma}$. 
	In the same way, we could prove (2) of the Claim 2.
	
		\vspace{2mm}
	
	\textbf{Claim 3.}   Given any  $t\in(0, ~\ln\frac{1+(\delta+\eta)}{1-(\delta+\eta)})$.
	\begin{enumerate}
		\item[(1)] If $\alpha_i$ and $\alpha_j$ are contained in the same component of the level set $\Sigma_{B,t}$, then $\beta_{i}$ and $\beta_j$ are contained in the same component of the level set $\Sigma_{\widetilde{B},T_0}$.
		\item[(2)] 	 If $\beta_m$ and $\beta_n$ are contained in the same component of the level set $\Sigma_{\widetilde{B},t}$, then $\alpha_m$ and $\alpha_n$ are contained in the same component of the level set $\Sigma_{B,T_0}$.
	\end{enumerate}
	
	\vspace{2mm}
	
	Let $\Omega$ be an arbitrary component of the level set $\Sigma_{B, t}$. Notice that $\Sigma_{B, t}\subseteq\Omega_{B,\delta+\eta}$. Then, for any $\xi\in\partial\Omega$,
	\[
	|\widetilde{B}(\xi)|\le|B(\xi)|+|\widetilde{B}(\xi)-B(\xi)|\le\delta+\eta+\eta=\delta+2\eta.
	\]
	By the maximum modulus principle, $|\widetilde{B}(z)|\le\delta+2\eta$ holds for every $z\in\Omega$, which means the component $\Omega$ is contained in a component of the level set $\Omega_{\widetilde{B},\delta+2\eta}=\Sigma_{B,T_0}$. Since $T_0\ge\ln\frac{1+\delta}{1-\delta}$, it follows from Claim 1 that $\alpha_i$ and $\alpha_j$ being in the same component of the level set $\Sigma_{B,t}$ implies $\beta_{i}$ and $\beta_j$ being in the same component of the level set $\Sigma_{\widetilde{B},T_0}$. In the same way, we could prove (2) of  Claim 3.

	\vspace{2mm}
	
	Now, let us consider the persistence module induced by Blaschke products with strong Property $\mathfrak{B}$.
	For any $w\in\mathbb{D}$, denote by $[w]_{t}^{\mathbb{B}}$ the component of $\Sigma_{B,t}$ containing $w$, i.e.,
	\[
	[w]_{t}^{\mathbb{B}}=\{z;z\ \text{belongs}\ \text{to}\ \text{the}\ \text{same}\ \text{component}\ \text{of}\ \Sigma_{B,t}\ (\Omega_{B,\theta})\ \text{with}\ w\}.
	\] 
 Recall that  $[\alpha_{k}]_{t}^{\mathbb{B}}$ is the generator of zeroth homology group of the component of $\Sigma_{B,t}$ containing $\alpha_{k}$ and  $[\beta_{k}]_{t}^{\widetilde{\mathbb{B}}}$ is the generator of zeroth homology group of the component of $\Sigma_{\widetilde{B},t}$ containing $\beta_{k}$. Assume that the zero points of $B$ and $\widetilde{B}$ by $\{\alpha_{i}\}_{i\in\Lambda}$ and $\{\beta_{i}\}_{i\in\Lambda}$ are arranged as stated in Claim 1,
	we have
	\[
	V^{|B|}_{t}=\text{span}_{\mathbb{F}}\{[\alpha_{k}]_{t}^{\mathbb{B}}, \ k\in\Lambda\},
	\]
	and 
	\[
	V^{|\widetilde{B}|}_{t}=\text{span}_{\mathbb{F}}\{[\beta_{k}]_{t}^{\widetilde{\mathbb{B}}}, \ k\in\Lambda\}.
	\]

	For any $t>0$, define
	\[
	F([\alpha_{k}]_{t}^{\mathbb{B}})=[\beta_{k}]_{t+\Delta}^{\widetilde{\mathbb{B}}}, \ \ \ \text{for every} \ k\in\Lambda.
	\]
	For $t\in(0, ~\ln\frac{1+(\delta+\eta)}{1-(\delta+\eta)})$, it follows from Claim 3 that if $[\alpha_{i}]_{t}^{\mathbb{B}}=[\alpha_{j}]_{t}^{\mathbb{B}}$, then $[\beta_{i}]_{T_0}^{\widetilde{\mathbb{B}}}=[\beta_{j}]_{T_0}^{\widetilde{\mathbb{B}}}$ and hence $[\beta_{i}]_{t+\Delta}^{\widetilde{\mathbb{B}}}=[\beta_{j}]_{t+\Delta}^{\widetilde{\mathbb{B}}}$. 
	For $t\ge\ln\frac{1+(\delta+\eta)}{1-(\delta+\eta)}$, it follows from Claim 2 that if $[\alpha_{i}]_{t}^{\mathbb{B}}=[\alpha_{j}]_{t}^{\mathbb{B}}$, then  $[\beta_{i}]_{t+2\ln\gamma}^{\widetilde{\mathbb{B}}}=[\beta_{j}]_{t+2\ln\gamma}^{\widetilde{\mathbb{B}}}$ and hence $[\beta_{i}]_{t+\Delta}^{\widetilde{\mathbb{B}}}=[\beta_{j}]_{t+\Delta}^{\widetilde{\mathbb{B}}}$. Therefore, $F$ is well defined.

	Furthermore, for any $0<s<t$, it is obvious that for every $k\in\Lambda$,
	\[
	\widetilde{\pi}_{s+\Delta,t+\Delta}\circ F([\alpha_{k}]_{s}^{\mathbb{B}})=\widetilde{\pi}_{s+\Delta,t+\Delta}([\beta_{k}]_{s+\Delta}^{\widetilde{\mathbb{B}}})=[\beta_{k}]_{t+\Delta}^{\widetilde{\mathbb{B}}}=F([\alpha_{k}]_{t}^{\mathbb{B}})=F\circ\pi_{s,t}([\alpha_{k}]_{s}^{\mathbb{B}}).
	\]
    Then, we could naturally extend $F$ to be the persistence module homomorphism	
	 $F:\mathbb{B}\rightarrow\widetilde{\mathbb{B}}[\Delta]$.
	 
	Similarly, we define the persistence module homomorphism $G:\widetilde{\mathbb{B}}\rightarrow\mathbb{B}[\Delta]$ by
	\[
	G([\beta_{k}]_{t}^{\widetilde{\mathbb{B}}})=[\alpha_{k}]_{t+\Delta}^{\mathbb{B}}, \ \ \ \text{for every} \ k\in\Lambda.
	\]
	
	Notice that for any $t>0$, we have for every $k\in\Lambda$,
	\[
	G[\Delta]\circ F([\alpha_{k}]_{t}^{\mathbb{B}})=G[\Delta]([\beta_{k}]_{t+\Delta}^{\widetilde{\mathbb{B}}})=[\alpha_{k}]_{t+2\Delta}^{\mathbb{B}}=\Phi_{\mathbb{B}}^{2\Delta}([\alpha_{k}]_{t}^{\mathbb{B}}),
	\]
	and
	\[
	F[\Delta]\circ G([\beta_{k}]_{t}^{\widetilde{\mathbb{B}}})=F[\Delta]([\alpha_{k}]_{t+\Delta}^{\mathbb{B}})=[\beta_{k}]_{t+2\Delta}^{\widetilde{\mathbb{B}}}=\Phi_{\widetilde{\mathbb{B}}}^{2\Delta}([\beta_{k}]_{t}^{\widetilde{\mathbb{B}}}).
	\]
	
	Then, we obtain the following two commutative diagrams.
	\begin{center}
		\begin{tikzpicture}
			\node[inner sep=1pt] (a) at (0,0) {$\mathbb{B}$};
			\node[inner sep=1pt] (b) at (2,0) {$\widetilde{\mathbb{B}}[\Delta]$};
			\node[inner sep=1pt] (c) at (4,0) {$\mathbb{B}[2\Delta]$};
			\node at (1,-0.25) {$F$};
			\node at (2,0.95) {$\Phi_{\mathbb{B}}^{2\Delta}$};
			\node at (3,-0.25) {$G[\Delta]$};
			\draw[-latex] (a.0) -- (b.180);
			\draw[-latex] (b.0) -- (c.180);
			\draw[-latex] (a.40) to[out = 30,in=150]  (c.160);
		\end{tikzpicture}\ \ \ 
		\begin{tikzpicture}
			\node[inner sep=1pt] (a) at (0,0) {$\widetilde{\mathbb{B}}$};
			\node[inner sep=1pt] (b) at (2,0) {$\mathbb{B}[\Delta]$};
			\node[inner sep=1pt] (c) at (4,0) {$\widetilde{\mathbb{B}}[2\Delta]$};
			\node at (1,-0.25) {$G$};
			\node at (2,0.95) {$\Phi_{\widetilde{\mathbb{B}}}^{2\Delta}$};
			\node at (3,-0.25) {$F[\Delta]$};
			\draw[-latex] (a.0) -- (b.180);
			\draw[-latex] (b.0) -- (c.180);
			\draw[-latex] (a.40) to[out = 30,in=150]  (c.160);
		\end{tikzpicture}
	\end{center}
	Thus,
	\begin{equation*}
		d_{int}(\mathbb{B},\mathbb{\widetilde{B}})\le\Delta.
	\end{equation*} 
	\end{proof}

Now we are ready to prove Theorem B.

\begin{proof}[\bf{Proof of Theorem B}]
	Let $B$ be a Blaschke product with $2\eta_0$-strong Property $\mathfrak{B}$. Notice that for any $0<\eta<\eta_0$,  $B$ also has $2\eta$-strong Property $\mathfrak{B}$. Suppose that $\widetilde{B}$ is a Blaschke product with Property $\mathfrak{B}$ and $\|\widetilde{B}-B\|_{\infty}<\eta$. Then, by Lemma \ref{rhoalphabeta}, $\widetilde{B}$ has $\eta$-strong Property $\mathfrak{B}$. More precisely,
	\begin{equation*}
		\delta_{\widetilde{B},\eta}\le\delta_{B,2\eta},
	\end{equation*}
    this implies
	\begin{equation*}
	\delta_0(\eta)=\min\{\delta_{B,\eta},\delta_{\widetilde{B},\eta}\}\le\delta_{B,\eta}.
    \end{equation*}
    By Lemma \ref{limto0}, $\delta_0(\eta)\to 0$ as $\eta\to 0$. Furthermore, when $\eta\to 0$, we have 
    \[
    \gamma(\eta)=1+\frac{2}{\sqrt{-\ln\delta_{0}(\eta)}}\to 1^+,\ \ \ \ \ 
    \delta(\eta)=\delta_{0}(\eta) ^{1-{1}/{\sqrt{-\ln\delta_{0}(\eta)}}}\to 0, 
    \] 
    and
    \[
    T_0(\eta)=\ln\frac{1+(\delta(\eta)+2\eta)}{1-(\delta(\eta)+2\eta)}\to 0.
    \]
    Then by Lemma \ref{dintDelta}, $\eta\to 0$ implies
    \begin{equation*}
    	d_{int}(\mathbb{B},\widetilde{\mathbb{B}})\le\Delta(\eta)=\max\left\{T_0(\eta), ~2\ln\gamma(\eta)\right\}\to 0.
    \end{equation*}
   	Therefore, for any $\epsilon>0$, there exists $\eta\in(0,1)$ such that  for any Blaschke product $\widetilde{B}(z)$ with strong Property $\mathfrak{B}$,  $\|\widetilde{B}-B\|_{\infty}<\eta$ implies 
	\[
	d_{int}(\mathbb{B},\widetilde{\mathbb{B}})\le\Delta(\eta)<\epsilon.
	\]
    This completes the proof.
\end{proof}

    Theorem B demonstrates that the interleaving distance $d_{int}$ is continuous with respect to the superemum norm for Blaschke products  with strong Property $\mathfrak{B}$. As an application, we could use the the interleaving distance to characterize the connectedness in the space of all inner functions with strong Property $\mathfrak{B}$.

\begin{corollary}
	Let $B$ and $\widetilde{B}$ be two Blaschke products with strong Property $\mathfrak{B}$.  If there is a path connecting $B$ and $\widetilde{B}$ in the set of all inner functions with strong Property $\mathfrak{B}$, then the interleaving distance $d_{int}$ between $\mathbb{B}$ and $\widetilde{\mathbb{B}}$ is finite.
\end{corollary}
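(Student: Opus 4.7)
The plan is to combine the continuity statement in Theorem B with a compactness argument along the path, followed by the triangle inequality for the pseudo-metric $d_{int}$.

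Let $\gamma : [0,1] \to \mathcal{P}$ be the given continuous path (with respect to the supremum norm) inside the set $\mathcal{P}$ of inner functions with strong Property $\mathfrak{B}$, with $\gamma(0) = B$ and $\gamma(1) = \widetilde{B}$. For each $s \in [0,1]$, $\gamma(s)$ is a Blaschke product with strong Property $\mathfrak{B}$, so Theorem B applies: choosing $\epsilon = 1$ there is some $\eta_s > 0$ such that every Blaschke product $f$ with strong Property $\mathfrak{B}$ satisfying $\|f - \gamma(s)\|_\infty < \eta_s$ obeys $d_{int}(\mathbb{F}, \boldsymbol{\gamma(s)}) < 1$, where $\mathbb{F}$ and $\boldsymbol{\gamma(s)}$ denote the induced persistence modules. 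By continuity of $\gamma$ in the supremum norm, the preimage $U_s := \gamma^{-1}\bigl(\{f : \|f-\gamma(s)\|_\infty < \eta_s/2\}\bigr)$ is an open neighborhood of $s$ in $[0,1]$.

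Next I would use compactness of $[0,1]$ to extract a finite subcover $U_{s_1}, \ldots, U_{s_N}$ of $\{U_s\}_{s \in [0,1]}$, and then choose a partition $0 = r_0 < r_1 < \cdots < r_n = 1$ fine enough so that every consecutive pair $\{r_i, r_{i+1}\}$ lies in a common $U_{s_{k(i)}}$. For such a pair we then have
\[
\|\gamma(r_i) - \gamma(s_{k(i)})\|_\infty < \eta_{s_{k(i)}}/2, \qquad \|\gamma(r_{i+1}) - \gamma(s_{k(i)})\|_\infty < \eta_{s_{k(i)}}/2,
\]
so both $\gamma(r_i)$ and $\gamma(r_{i+1})$ lie in the $\eta_{s_{k(i)}}$-ball around $\gamma(s_{k(i)})$. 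Theorem B then yields
\[
d_{int}(\boldsymbol{\gamma(r_i)}, \boldsymbol{\gamma(s_{k(i)})}) < 1, \qquad d_{int}(\boldsymbol{\gamma(r_{i+1})}, \boldsymbol{\gamma(s_{k(i)})}) < 1,
\]
and the triangle inequality for the pseudo-metric $d_{int}$ gives $d_{int}(\boldsymbol{\gamma(r_i)}, \boldsymbol{\gamma(r_{i+1})}) < 2$.

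Applying the triangle inequality along the chain $B = \gamma(r_0), \gamma(r_1), \ldots, \gamma(r_n) = \widetilde{B}$ yields
\[
d_{int}(\mathbb{B}, \widetilde{\mathbb{B}}) \;\leq\; \sum_{i=0}^{n-1} d_{int}(\boldsymbol{\gamma(r_i)}, \boldsymbol{\gamma(r_{i+1})}) \;<\; 2n \;<\; +\infty,
\]
which is the desired conclusion. The main subtlety is that Theorem B only provides a neighborhood whose radius $\eta_s$ depends on $\gamma(s)$ (the constants $\delta_{\gamma(s),\eta}$ and the resulting $\gamma$, $\delta$, $T_0$ from Lemma \ref{dintDelta} all depend on the base function), so no single uniform $\eta$ is available; this is precisely what makes the compactness step indispensable. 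Once the finite subcover is in hand, everything reduces to the triangle inequality and a finite sum.
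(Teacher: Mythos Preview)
Your argument is correct and follows essentially the same approach as the paper's own proof: apply Theorem~B at each point of the path with $\epsilon=1$, use compactness of $[0,1]$ to obtain a finite chain along which consecutive persistence modules are at interleaving distance at most~$2$, and conclude by the triangle inequality. The only cosmetic difference is that you work with $\eta_s/2$-balls pulled back to $[0,1]$ while the paper covers the image directly with $\eta_t$-balls, but the logic is identical.
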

\begin{proof}
	Let $B_t$, $t\in[0,1]$, be a path connecting $B$ and $\widetilde{B}$ in the set of all inner functions with strong Property $\mathfrak{B}$. For any $t\in[0,1]$, it follows from Theorem B that there exists $\eta_t\in(0,1)$ such that , if $A$ is a Blaschke product with Property $\mathfrak{B}$ and $\|A-B_t\|_{\infty}<\eta_t$, then $A$ has $\eta_t$-strong Property $\mathfrak{B}$ and $d_{int}(\mathbb{A},\mathbb{B})\le1$. Denote
	\[
	U_t=\{A;~A \ \text{is a Blaschke product with Property} \ \mathfrak{B} \ \text{and} \ \|A-B_t\|_{\infty}<\eta_t\}.
	\]
	Then, $\{U_t\}_{t\in[0,1]}$ is an open cover of the path $B_t$ in set of all inner functions with strong Property $\mathfrak{B}$. Following from the compactness of $[0,1]$ and the  continuity of the path $B_t$ with respect to $t$,  there  exists a finite partition on $[0,1]$,
	\[
	0=t_0< t_1\cdots<t_k<\cdots <t_{K-1}<t_K=1,
	\]
	such that for any $k=0,1,\cdots, K-1$, both $B_{t_k}$ and $B_{t_{k+1}}$ belong to some certain open set $U_t$.
	Consequently, it follows from the triangle inequality of the interleaving distance that for any $k=0,1,\cdots, K-1$,
	\[
	d_{int}(\mathbb{B}_k,\mathbb{B}_{k+1})\le d_{int}(\mathbb{B}_k,\mathbb{B}_{t})+d_{int}(\mathbb{B}_t,\mathbb{B}_{k+1})\le2.
	\]
	Therefore, by the triangle inequality of the interleaving distance again, 
	$$
	d_{int}(\mathbb{B},\widetilde{\mathbb{B}})\le \sum\limits_{k=0}^{K-1}d_{int}(\mathbb{B}_k,\mathbb{B}_{k+1})\le 2K.
	$$
	This finishes the proof.
\end{proof}

By the way, the set of all inner functions with  weak Property $\mathfrak{H}$ contains all interpolating Blaschke products, and is contained in the set of all Carleson-Newman Blaschke products.
\begin{proposition}\label{interWPH}
	Each interpolating Blaschke product has  weak Property $\mathfrak{H}$.
\end{proposition}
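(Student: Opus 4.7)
The plan is to invoke Hoffman's Lemma directly. Let $B$ be an interpolating Blaschke product with zero set $\mathcal{Z}(B)=\{z_n\}$. By Carleson's theorem, the sequence $\{z_n\}$ is uniformly separated, which is equivalent to saying
\[
\delta(B)=\inf_{n\in\mathbb{N}}(1-|z_n|^2)|B'(z_n)|\ge\delta
\]
for some $\delta\in(0,1)$. Now choose Hoffman constants: pick $\eta\in\bigl(0,(1-\sqrt{1-\delta^2})/\delta\bigr)$ and then $\epsilon\in\bigl(0,\eta\cdot\tfrac{\delta-\eta}{1-\delta\eta}\bigr)$ as allowed in Lemma~\ref{Hoffman}.

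With these choices, Lemma~\ref{Hoffman} gives that the pseudo-hyperbolic disks $D_\rho(z_n,\eta)$ are pairwise disjoint and
\[
\Omega_{B,\epsilon}=\{z\in\mathbb{D};\,|B(z)|<\epsilon\}\subseteq\bigsqcup_{n}D_\rho(z_n,\eta).
\]
Since the right-hand side is a disjoint union of pseudo-hyperbolic disks, each connected component of $\Omega_{B,\epsilon}$ is entirely contained in exactly one disk $D_\rho(z_n,\eta)$. Here is where the two desired properties will be read off.

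For weak Property $\mathfrak{B}$ at level $\epsilon$: a closed pseudo-hyperbolic disk $\overline{D_\rho(z_n,\eta)}$ with $\eta<1$ is compact and lies in $\mathbb{D}$ (it is the image of the closed Euclidean disk $\overline{D(0,\eta)}$ under a M\"obius self-map of $\mathbb{D}$). Hence the closure of each component of $\Omega_{B,\epsilon}$ is contained in $\mathbb{D}$. For the $\delta_{B,\epsilon}<1$ condition: if $z_1,z_2$ both lie in a single disk $D_\rho(z_n,\eta)$, the strong form of the triangle inequality for the pseudo-hyperbolic metric gives
\[
\rho(z_1,z_2)\le\frac{\rho(z_1,z_n)+\rho(z_n,z_2)}{1+\rho(z_1,z_n)\rho(z_n,z_2)}<\frac{2\eta}{1+\eta^2}<1,
\]
so $\delta_{B,\epsilon}\le\tfrac{2\eta}{1+\eta^2}<1$. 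Combining the two conclusions shows that $B$ is of $\epsilon$-weak Property $\mathfrak{H}$.

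There is no real obstacle: Hoffman's Lemma does all the work, and the only mild subtlety is to observe that the sublevel set, being open and contained in a disjoint union of disks, has each of its components trapped in a single disk. The choice of Hoffman constants $(\delta,\eta,\epsilon)$ must respect the constraints of Lemma~\ref{Hoffman}, but any sufficiently small $\eta$ and subsequently small $\epsilon$ will do.
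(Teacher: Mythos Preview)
Your proof is correct and follows essentially the same approach as the paper: both invoke Hoffman's Lemma with an admissible choice of constants $(\delta,\eta,\epsilon)$ to conclude that $B$ has $\epsilon$-weak Property~$\mathfrak{H}$. You simply spell out in more detail why the inclusion $\Omega_{B,\epsilon}\subseteq\bigsqcup_n D_\rho(z_n,\eta)$ yields both the weak Property~$\mathfrak{B}$ condition and the bound $\delta_{B,\epsilon}<1$, whereas the paper leaves these verifications implicit.
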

\begin{proof}
	Let $B(z)=\lambda z^{m}\prod_{n}\frac{{\vert}z_{n}{\vert}}{z_{n}}\frac{z_{n}-z}{1-\overline{z_{n}}z}$ be an interpolating Blaschke product. Suppose that 
	\begin{equation*}
		\delta=\inf_{n\in \mathbb{N}}\prod_{k\neq n}{\big \vert}\frac{z_{k}-z_{n}}{1-\overline{z_{k}}z_{n}}{\big \vert}>0.
	\end{equation*}
   Then by Lemma \ref{Hoffman}, for any $0<\eta < (1-\sqrt{1-\delta^2})/\delta$ and
   	\[
   0<\epsilon<\eta\cdot\frac{\delta-\eta}{1-\delta\eta},
   \]
   $B$ has $\epsilon$-weak Property $\mathfrak{H}$.
\end{proof}

\begin{proposition}\label{CNWPH}
	Each Blaschke product with  weak Property $\mathfrak{H}$ is a Carleson-Newman Blaschke product.
\end{proposition}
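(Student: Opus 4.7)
The plan is to exhibit the zero sequence of $B$ as a finite disjoint union of uniformly separated sequences. Fix $\eta\in(0,1)$ witnessing weak Property $\mathfrak{H}$, so $\delta_{B,\eta}<1$. By Lemma~\ref{zero number}, every component of the level set $\Omega_{B,\eta}$ contains at most $N:=\lceil \ln\eta/\ln\delta_{B,\eta}\rceil$ zeros of $B$. I enumerate the components as $\{\Omega_j\}_j$ and list the zeros of $B$ lying in $\Omega_j$ (in any fixed order) as $w_{j,1},\ldots,w_{j,k_j}$ with $1\le k_j\le N$. For each $i=1,\ldots,N$, I set $S_i:=\{w_{j,i}:k_j\ge i\}$, so that the zero multi-set $\mathcal{Z}(B)$ decomposes as $\bigsqcup_{i=1}^{N} S_i$.

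The heart of the argument is a minimum-modulus estimate on each component. For each $j$, let $B^{(j)}$ denote the Blaschke product obtained from $B$ by removing precisely the zeros $w_{j,1},\ldots,w_{j,k_j}$. Since $B$ has weak Property $\mathfrak{B}$, the closure $\overline{\Omega_j}$ is a compact subset of $\mathbb{D}$, so $B^{(j)}$ is analytic and nowhere vanishing on an open neighborhood of $\overline{\Omega_j}$. On $\partial\Omega_j$ one has $|B(\xi)|=\eta$ and
\[
|B(\xi)|=\Bigl(\prod_{i'=1}^{k_j}\rho(w_{j,i'},\xi)\Bigr)\,|B^{(j)}(\xi)|,
\]
so $|B^{(j)}(\xi)|\ge\eta$ because every pseudo-hyperbolic factor lies in $(0,1]$. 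The minimum modulus principle then yields $|B^{(j)}(z)|\ge\eta$ for every $z\in\Omega_j$, and in particular
\[
\prod_{\substack{(j',i')\,:\,j'\ne j\\ 1\le i'\le k_{j'}}}\rho(w_{j',i'},w_{j,i})\;=\;|B^{(j)}(w_{j,i})|\;\ge\;\eta.
\]

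Because every factor in the above product lies in $(0,1]$, discarding factors can only increase the value. The sub-product indexed by the subsequence $S_i$ therefore also satisfies
\[
\inf_{w_{j,i}\in S_i}\;\prod_{\substack{j'\ne j\\ k_{j'}\ge i}}\rho(w_{j',i},w_{j,i})\;\ge\;\eta\;>\;0,
\]
which is exactly the uniform-separation condition for $S_i$. Hence each $S_i$ is an interpolating sequence; let $B_i$ be its associated interpolating Blaschke product. Since $B$ and $\prod_{i=1}^{N}B_i$ are Blaschke products with the same zero multi-set, they agree up to a unimodular constant, so $B$ is a product of finitely many interpolating Blaschke products, i.e., Carleson--Newman. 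The one delicate point is the minimum-modulus step, which genuinely needs weak Property $\mathfrak{B}$ so that $B^{(j)}$ extends analytically across $\partial\Omega_j$ and the boundary value $|B|=\eta$ controls $|B^{(j)}|$ there; once that is in place, the uniform separation of each $S_i$ and the final factorization are automatic.
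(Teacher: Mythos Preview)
The proposal is correct and follows essentially the same approach as the paper: partition $\mathcal{Z}(B)$ into $N$ subsequences with at most one zero per component of $\Omega_{B,\eta}$, then invoke the minimum modulus principle on each component (whose closure lies in $\mathbb{D}$ by weak Property~$\mathfrak{B}$) to obtain the uniform separation bound $\eta$ for each subsequence. The only cosmetic difference is that the paper applies the minimum modulus principle directly to the sub-Blaschke product $B_j$ with a single zero deleted, while you delete all the zeros in $\Omega_j$ from $B$ and then pass to the $S_i$-sub-product; both routes yield the same inequality.
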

\begin{proof}
    Let $B$ be a Blaschke product with $\eta$-weak Property $\mathfrak{H}$.
  By Lemma \ref{zero number}, there exists a positive integer $N$ such that each component of the level set $\Omega_{B,\eta}$ contains at most $N$ zero points. Then, we could write the Blaschke product $B$ as a finite product of Blaschke products $B_j$, $j=1,\cdots, N$, where each $B_j$ has at most one zero point in any one component of $\Omega_{B,\eta}$. It suffices to show that each $B_j$ is an interpolating Blaschke product. Given any $B_j$. Denote $\mathcal{Z}(B_j)=\{z_{j,k}\}_{k=1}^{\infty}$ and denote the  component of $\Omega_{B,\eta}$ containing $z_{j,k}$ by $\Omega_{j,k}$. For any $k\in\mathbb{N}$ and any $\xi\in\partial\Omega_{B,\eta}$,
  \[
 \left\vert \prod\limits_{i\neq k}\frac{z_{j,i}-\xi}{1-\overline{z_{j,i}}\xi} \right\vert \geq  \left\vert \prod\limits_{i=1}^{\infty}\frac{z_{j,i}-\xi}{1-\overline{z_{j,i}}\xi} \right\vert=\eta.
  \]
  Notice that the analytic function $\prod\limits_{i\neq k}\frac{z_{j,i}-\xi}{1-\overline{z_{j,i}}\xi}$ is non-zero everywhere in $\overline{\Omega_{j,k}}$. By the minimum modulus principle, 
   \[
  \left\vert \prod\limits_{i\neq k}\frac{z_{j,i}-z_{j,k}}{1-\overline{z_{j,i}}z_{j,k}}\right\vert\geq\inf\limits_{\xi\in\partial\Omega_{B,\eta}}\left\vert \prod\limits_{i\neq k}\frac{z_{j,i}-\xi}{1-\overline{z_{j,i}}\xi}\right\vert \geq\eta.
  \]
  Therefore, $B_j$ is an interpolating Blaschke product and hence $B$ is a Carleson-Newman Blaschke product.
\end{proof}

\section{Explicit formula of interleaving distance for Blaschke products with order two}

\iffalse
\textcolor{red}{
	Since
	\begin{equation*}
		|B(z)|=|\lambda z^{m}\prod_{n=1}^{+\infty}\frac{|\alpha_{n}|}{\alpha_{n}}(\frac{\alpha_{n}-z}{1-\overline{\alpha_{n}}z})|=| z^{m}\prod_{n=1}^{+\infty}\frac{\alpha_{n}-z}{1-\overline{\alpha_{n}}z}|,
	\end{equation*}
	where $\lambda$ is a complex number with $|\lambda|=1$. We can rewrite
	\begin{equation*}
		B(z)=\prod_{n=1}^{+\infty}\frac{\beta_{n}-z}{1-\overline{\beta_{n}}z}.
	\end{equation*}
}
\fi

In this subsection, we consider Blashcke products with finitely many zeros. We provide an explicit formula for computing the interleaving distance between two Blaschke products with two zeros. For a fixed integer $n$, we also define a pseudo-metric on some certain spaces corresponding to the Blashcke products with $n$ zeros. When $n=2$, it will become a metric.

In the following context, the persistence module obtained  from a Blaschke product $B$ only cares about its modulus $|B|$. Without loss of generality, we take $B(z)=\prod_{n=1}^{+\infty}\frac{\beta_{n}-z}{1-\overline{\beta_{n}}z}$.

For the Blashcke products $B_{1}(z)$ and $B_{2}(z)$ with two  zeros, we can get the following theorem,

\begin{theorem}
Let $B_{1}$ be the Blashcke product with two zero points $\beta_{0}$ and $\beta_{1}$, and let $B_{2}$ be the Blashcke product with two zero points $\gamma_{0}$ and $\gamma_{1}$. Let $\mathbb{B}_1$ and $\mathbb{B}_2$ be the persistence modules induced by $B_1$ and $B_2$. Then, 
\[
d_{int}(\mathbb{B}_{1},\mathbb{B}_{2})=\min\left(\max(\frac{1}{2}\ln\frac{1}{\sqrt{1-|w_{2}|^{2}}},\frac{1}{2}\ln\frac{1}{\sqrt{1-|w_{1}|^{2}}}),|\ln\frac{\sqrt{1-|w_{1}|^{2}}}{\sqrt{1-|w_{2}|^{2}}}|\right),
\] 
where $w_{1}=\frac{\beta_{0}-\beta_{1}}{1-\overline{\beta_{0}}\beta_{1}}$, $w_{2}=\frac{\gamma_{0}-\gamma_{1}}{1-\overline{\gamma_{0}}\gamma_{1}}$.
\end{theorem}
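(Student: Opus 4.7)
The plan is to reduce everything to a bottleneck-distance computation between two easily-described barcodes, using Möbius invariance to normalize the Blaschke products and Theorem~A to read off their interval decompositions. First I would apply Lemma~\ref{keeptr} to each $B_i$: composing on the right with the Möbius involution $\varphi_i(z)=(\beta_0^{(i)}-z)/(1-\overline{\beta_0^{(i)}}z)$ (where $\beta_0^{(i)}$ is the corresponding ``first'' zero) produces a Blaschke product whose zeros are $0$ and $w_i$, with $|w_i|=\rho(\beta_0^{(i)},\beta_1^{(i)})$. Up to a unimodular constant (which does not affect $|B_i|$) this is $b_i(z)=z\cdot(w_i-z)/(1-\overline{w_i}z)$, and by Lemma~\ref{keeptr} the induced persistence module is unchanged. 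Every finite Blaschke product trivially has Property~$\mathfrak{B}$ (on $\partial\mathbb{D}$ the modulus equals $1$, so each level set has compact closure in $\mathbb{D}$), so Theorem~A applies.

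Next I would compute the unique critical point and critical value of $b(z)=z(w-z)/(1-\bar w z)$ with $r=|w|$. Solving $b'(z)=0$ reduces to the quadratic $\bar w z^{2}-2z+w=0$, whose root in $\mathbb{D}$ is $z^{\ast}=(1-\sqrt{1-r^{2}})/\bar w$. Using the identity $\bar w z^{\ast}=1-\sqrt{1-r^{2}}$ throughout, a short calculation gives $|b(z^{\ast})|=\bigl((1-\sqrt{1-r^{2}})/r\bigr)^{2}$. Writing $c=|b(z^{\ast})|^{1/2}=r/(1+\sqrt{1-r^{2}})$, the key telescoping identity
$$\frac{1+c^{2}}{1-c^{2}}=\frac{1}{\sqrt{1-r^{2}}}$$
converts the endpoint predicted by Theorem~A into a clean form. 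Since a degree-$2$ Blaschke product has total critical order $1$ in $\mathbb{D}$, this single critical point is simple ($m=1$), so Theorem~A yields $\mathbb{B}_i\cong\mathbb{F}(0,+\infty)\oplus\mathbb{F}(0,s_i]$ with $s_i=\ln(1/\sqrt{1-|w_i|^{2}})$.

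Finally, each $\mathbb{B}_i$ is of finite type (spectrum $\{0,s_i,+\infty\}$), so the isometry theorem quoted from \cite{PRSZ-2020} identifies $d_{int}(\mathbb{B}_{1},\mathbb{B}_{2})$ with the bottleneck distance of the two barcodes $\{(0,+\infty),(0,s_i]\}$. The infinite bar on each side has infinite length and hence cannot be discarded in any $\Delta$-matching; moreover no $\Delta$-shift of a finite bar can contain $(0,+\infty)$, so the two infinite bars are forced to be matched to each other at zero cost. The remaining finite bars $(0,s_{1}]$ and $(0,s_{2}]$ may either be matched, at cost $|s_{1}-s_{2}|$ by Remark~\ref{twodi} with $a=c=0$, or both left unmatched, which requires $\Delta\ge\max(s_{1}/2,s_{2}/2)$. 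Taking the minimum produces exactly the stated formula. The main technical obstacle is the middle paragraph: the explicit identification of the critical point followed by the telescoping identity that turns $(1\pm c^{2})$ into $\sqrt{1-r^{2}}$; everything else is essentially bookkeeping with tools already developed earlier in the paper.
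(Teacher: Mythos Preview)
Your proposal is correct and follows essentially the same route as the paper: M\"obius-normalize via Lemma~\ref{keeptr}, compute the single critical value of the degree-two product to obtain $s_i=\ln\bigl(1/\sqrt{1-|w_i|^2}\bigr)$, invoke Theorem~A for the interval decomposition, and finish with the bottleneck formula for two bars (Remark~\ref{twodi}). The only cosmetic difference is that the paper separates out the degenerate cases $w_i=0$ (double zeros) explicitly, whereas your computation of $z^\ast$ tacitly assumes $w\neq 0$; you should note that when $w_i=0$ the finite bar collapses ($s_i=0$) and the formula still holds.
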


\begin{proof}

	We divide the proof into three cases.
	\enumerate{
	
	\item  Suppose that $B_{1}$ and $B_{2}$ are the Blaschke products with two pairwise distinct zero points.
	
	 Let $\psi_{1}(z)=\frac{|w_{1}|}{w_{1}}\frac{\beta_{0}-z}{1-\overline{\beta_{0}}z}$, then $\mathbb{B}_{1}\cong \mathbb{B}_1(\psi_{1})$ by Lemma \ref{keeptr}, where $\mathbb{B}_1(\psi_{1})$ is the persistence module induced by $B_1\circ\psi_1$. And by Theorem A, we only need to know the critical points to reconstruct the persistence modules $\mathbb{B}_1$ and $\mathbb{B}_2$. Next, we compute the critical points of $B_1\circ\psi_1$. Denote by $c_1$   the critical point of $B_1\circ\psi_1$.
	
	Since the critical point $c_1$ satisfying that $(B_1\circ\psi_1(z))'|_{c_1}=0$,  following from 
	\begin{align*}
		B_1\circ\psi_1(z)=-z\cdot \frac{|w_1|-z}{1-|w_1|z},
	\end{align*} we have
	\begin{align*}
		\frac{1}{c_1}+\frac{|w_{1}|^2-1}{(1-|w_{1}|c_1)(|w_{1}|-c_1)}=0,
	\end{align*}
	and then,
	\begin{align*}
		c_1 = \frac{1-\sqrt{1-|w_{1}|^{2}}}{|w_{1}|}.
	\end{align*}
	Furthermore,
	\begin{align*}
		B_{1}\circ\psi_{1}(c_1)=-\frac{(1-\sqrt{1-|w_{1}|^{2}})^{2}}{|w_{1}|^{2}},
	\end{align*}
	\begin{align*}
		|B_{1}\circ\psi_{1}(c_1)|=\frac{(1-\sqrt{1-|w_{1}|^{2}})^{2}}{|w_{1}|^{2}}.
	\end{align*}
	Since
	\begin{align*}
		\ln\frac{1+|B_{1}\circ\psi_{1}(c_1)|}{1-|B_{1}\circ\psi_{1}(c_1)|}=\ln\frac{1}{\sqrt{1-|w_{1}|^{2}}},
	\end{align*}
	we have the persistence module $\mathbb{B}_{1}\cong\mathbb{F}(0,+\infty)\oplus \mathbb{F}(0,\ln\frac{1}{\sqrt{1-|w_{1}|^{2}}}]$.
	Let $w_{2}=\frac{\gamma_{0}-\gamma_{1}}{1-\overline{\gamma_{0}}\gamma_{1}}$, then $\mathbb{B}_{2}\cong\mathbb{F}(0,+\infty)\oplus \mathbb{F}(0,\ln\frac{1}{\sqrt{1-|w_{2}|^{2}}}]$. 
	
	Since two barcodes $\mathcal{B}$ and $\mathcal{C}$ are $\Delta$-matched with a finite $\Delta$ if and only if they have the same number of infinite bars (of form $(a,+\infty)$) and by Remark \ref{twodi}, we have $$d_{int}(\mathbb{B}_{1},\mathbb{B}_{2})=\min(\max(\frac{1}{2}\ln\frac{1}{\sqrt{1-|w_{2}|^{2}}},\frac{1}{2}\ln\frac{1}{\sqrt{1-|w_{1}|^{2}}}),|\ln\frac{\sqrt{1-|w_{1}|^{2}}}{\sqrt{1-|w_{2}|^{2}}}|).$$
	
	\item  Suppose that $B_{1}$ is the Blaschke products with zero point $\beta$ of order two and $B_{2}$ is the Blaschke products with zero point $\gamma$ of order two.
	
	 There is only one generator for each  persistence module.  By Theorem A,  $\mathbb{B}_{1}\cong\mathbb{F}(0,+\infty)$ and $\mathbb{B}_{2}\cong\mathbb{F}(0,+\infty)$, $$d_{int}(\mathbb{B}_{1},\mathbb{B}_{2})=0.$$
	
	\item Suppose that $B_{1}$ is the Blaschke products with two pairwise distinct zero points $\beta_{0}$ and $\beta_{1}$, $B_{2}$ is the Blaschke products with zero point $\gamma$ of order two.
	
	The persistence module  $\mathbb{B}_{1}\cong\mathbb{F}(0,+\infty)\oplus \mathbb{F}(0,\ln\frac{1}{\sqrt{1-|w_{1}|^{2}}}]$. And $\mathbb{B}_{2}\cong\mathbb{F}(0,+\infty)$, then we  have
	\begin{align*}
		d_{int}(\mathbb{B}_{1},\mathbb{B}_{2})=&\min(\frac{1}{2}\ln\frac{1}{\sqrt{1-|w_{1}|^{2}}},|\ln\frac{1}{\sqrt{1-|w_{1}|^{2}}}|)\\
		=&\frac{1}{2}\ln\frac{1}{\sqrt{1-|w_{1}|^{2}}}.
	\end{align*}
	
	Combining all three cases, we can summarize all cases in the following formula, $$d_{int}(\mathbb{B}_{1},\mathbb{B}_{2})=\min(\max(\frac{1}{2}\ln\frac{1}{\sqrt{1-|w_{2}|^{2}}},\frac{1}{2}\ln\frac{1}{\sqrt{1-|w_{1}|^{2}}}),|\ln\frac{\sqrt{1-|w_{1}|^{2}}}{\sqrt{1-|w_{2}|^{2}}}|).$$
	}

\end{proof}

For any {\bf a}=$(\alpha_1,\cdots,\alpha_n)\in \prod_{i=1}^{n} \mathbb{D}$, denote the Blaschke product with zero points $\{\alpha_1,\cdots,\alpha_n\}$ by $B_{\bf a}(z)$. We define $\bar{d}_n:\prod_{i=1}^{n} \mathbb{D}\times\prod_{i=1}^{n} \mathbb{D}\rightarrow\mathbb{R}$ by $\bar{d}_n({\bf a},{\bf b})=d_{int}(\mathbb{B}_{\bf a},\mathbb{B}_{\bf b})$. We say ${\bf a}\sim_{M}{\bf b}$ if there exists $\varphi \in \text{Aut}(\mathbb{D})$ such that ${\bf b}=\varphi({\bf a})=(\varphi(\alpha_1),\cdots,\varphi(\alpha_n))$.

\begin{proposition} \label{equ_dis}
	$\bar{d}_n(\cdot,\cdot)$ induces a pseudo-metric $d_n$ on $\prod_{i=1}^{n} \mathbb{D}/\sim_{M}$. 
\end{proposition}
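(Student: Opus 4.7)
The plan is as follows. First, I would verify that $\sim_M$ is an equivalence relation on $\prod_{i=1}^n \mathbb{D}$: the identity automorphism gives reflexivity, the inverse (which again lies in $\text{Aut}(\mathbb{D})$) gives symmetry, and composition gives transitivity. Hence the quotient $\prod_{i=1}^n \mathbb{D}/\sim_M$ makes sense, and one may ask whether $\bar{d}_n$ descends to it.

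The key step is to show that ${\bf a} \sim_M {\bf b}$ implies the persistence modules $\mathbb{B}_{{\bf a}}$ and $\mathbb{B}_{{\bf b}}$ are isomorphic. Write ${\bf b} = \varphi({\bf a})$ with $\varphi \in \text{Aut}(\mathbb{D})$. The function $B_{{\bf a}} \circ \varphi^{-1}$ is a finite Blaschke product whose zero set is exactly $\{\varphi(\alpha_i)\}_{i=1}^n$, which also is the zero set of $B_{{\bf b}}$. By the uniqueness of finite Blaschke products up to a unimodular factor, $B_{{\bf a}}\circ\varphi^{-1}$ and $B_{{\bf b}}$ therefore differ only by a unimodular constant. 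In particular, $|B_{{\bf a}}\circ\varphi^{-1}(z)| = |B_{{\bf b}}(z)|$ for every $z\in\mathbb{D}$, so these two functions give rise to identical sublevel sets and therefore identical persistence modules. Invoking Lemma \ref{keeptr} with $u=B_{{\bf a}}$ and $v = B_{{\bf a}}\circ\varphi^{-1}$ then yields $\mathbb{B}_{{\bf a}}\cong\mathbb{B}_{{\bf b}}$, so $\bar{d}_n({\bf a},{\bf b})=d_{int}(\mathbb{B}_{{\bf a}},\mathbb{B}_{{\bf b}})=0$.

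Well-definedness of $d_n$ on the quotient then follows easily: if ${\bf a}\sim_M {\bf a}'$ and ${\bf b}\sim_M {\bf b}'$, the previous step gives $d_{int}(\mathbb{B}_{{\bf a}},\mathbb{B}_{{\bf a}'})=0=d_{int}(\mathbb{B}_{{\bf b}},\mathbb{B}_{{\bf b}'})$, and applying the triangle inequality for $d_{int}$ twice yields $\bar{d}_n({\bf a},{\bf b})=\bar{d}_n({\bf a}',{\bf b}')$. Finally, the pseudo-metric axioms for $d_n$, namely non-negativity, symmetry, vanishing on the diagonal, and the triangle inequality, are all inherited directly from the corresponding properties of the interleaving distance $d_{int}$ on persistence modules recalled in Section 2. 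The only step with any substantive content is the passage through Lemma \ref{keeptr}, and even that is immediate once the unimodular-constant observation is in place; no serious obstacle arises.
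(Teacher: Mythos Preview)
Your proof is correct and follows essentially the same approach as the paper: invoke Lemma \ref{keeptr} to get $\bar d_n({\bf a},{\bf b})=0$ whenever ${\bf a}\sim_M{\bf b}$, then let the pseudo-metric axioms descend from $d_{int}$. You are in fact more careful than the paper in spelling out the unimodular-constant issue and the triangle-inequality argument for well-definedness, both of which the paper leaves implicit.
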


\begin{proof}
	We define $d_n([{\bf a}],[{\bf b}])=\bar{d}_n({\bf a},{\bf b})$. By Lemma \ref{keeptr}, if  ${\bf a}\sim_{M}{\bf b}$ we have $$\bar{d}_n({\bf a},{\bf b})=d_{int}(\mathbb{B}_{\bf a},\mathbb{B}_{\bf b})=0,$$ which implies $d_n$ is well-defined.
	
	Since interleaving distance is a pseudo-metric, the symmetry and triangle inequality of $d_n$ follow from the property of $d_{int}$. 
\end{proof}

\begin{corollary}
	$d_2(\cdot,\cdot)$  is a metric on $\mathbb{D}\times\mathbb{D}/\sim_{M}$.
\end{corollary}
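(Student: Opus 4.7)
The plan is to upgrade the pseudo-metric $d_2$ from Proposition \ref{equ_dis} to an honest metric on $(\mathbb{D}\times\mathbb{D})/\sim_M$ by showing $d_2([\mathbf{a}],[\mathbf{b}])=0$ forces $\mathbf{a}\sim_M\mathbf{b}$. All the heavy lifting has already been done in the preceding theorem, so the argument reduces to reading off information from the explicit formula together with a standard transitivity property of $\mathrm{Aut}(\mathbb{D})$ on the pseudo-hyperbolic disk.

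First, I would use the formula
\[
d_{int}(\mathbb{B}_{1},\mathbb{B}_{2})=\min\left(\max\left(\tfrac{1}{2}\ln\tfrac{1}{\sqrt{1-|w_{2}|^{2}}},\tfrac{1}{2}\ln\tfrac{1}{\sqrt{1-|w_{1}|^{2}}}\right),\left|\ln\tfrac{\sqrt{1-|w_{1}|^{2}}}{\sqrt{1-|w_{2}|^{2}}}\right|\right)
\]
from the previous theorem, with $w_1=\frac{\beta_0-\beta_1}{1-\overline{\beta_0}\beta_1}$ and $w_2=\frac{\gamma_0-\gamma_1}{1-\overline{\gamma_0}\gamma_1}$. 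A quick case check shows this expression vanishes if and only if $|w_1|=|w_2|$: the second (absolute-value) term vanishes precisely when $|w_1|=|w_2|$, while the $\max$ term is nonnegative and vanishes only when $|w_1|=|w_2|=0$. So the non-trivial content of the corollary is the implication $|w_1|=|w_2|\Longrightarrow \mathbf{a}\sim_M\mathbf{b}$.

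Next, I would recognize $|w_i|$ as the pseudo-hyperbolic distance $\rho(\beta_0,\beta_1)$, respectively $\rho(\gamma_0,\gamma_1)$, and exploit the fact that Möbius automorphisms of $\mathbb{D}$ preserve $\rho$ and act transitively on $\mathbb{D}$. Explicitly, set $\psi_{\beta_0}(z)=\frac{\beta_0-z}{1-\overline{\beta_0}z}$ and $\psi_{\gamma_0}(z)=\frac{\gamma_0-z}{1-\overline{\gamma_0}z}$. Then $\psi_{\beta_0}(\beta_1)$ and $\psi_{\gamma_0}(\gamma_1)$ both lie on the circle of radius $|w_1|=|w_2|$ centered at $0$, so there is a rotation $R_\lambda(z)=\lambda z$ ($|\lambda|=1$) sending $\psi_{\beta_0}(\beta_1)$ to $\psi_{\gamma_0}(\gamma_1)$. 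The composition $\varphi:=\psi_{\gamma_0}^{-1}\circ R_\lambda\circ \psi_{\beta_0}\in\mathrm{Aut}(\mathbb{D})$ then satisfies $\varphi(\beta_0)=\gamma_0$ and $\varphi(\beta_1)=\gamma_1$, giving $\mathbf{b}=\varphi(\mathbf{a})$ and hence $[\mathbf{a}]=[\mathbf{b}]$.

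The degenerate case $|w_1|=|w_2|=0$ (i.e.\ $\beta_0=\beta_1$ and $\gamma_0=\gamma_1$) is handled identically: any Möbius transformation sending $\beta_0$ to $\gamma_0$ will do. Combined with the symmetry and triangle inequality inherited from $d_{int}$ via Proposition \ref{equ_dis}, this establishes that $d_2$ separates points on $(\mathbb{D}\times\mathbb{D})/\sim_M$, hence is a metric. There is no genuine obstacle here beyond correctly interpreting the formula; the only thing one must be slightly careful about is the case analysis where one or both of $|w_i|$ equal zero, which is exactly where cases (2) and (3) of the preceding theorem feed in cleanly.
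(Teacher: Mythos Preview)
Your proposal is correct and follows essentially the same route as the paper: from the explicit formula you deduce $d_2=0\Rightarrow |w_1|=|w_2|$, then build an explicit automorphism of $\mathbb{D}$ carrying $(\beta_0,\beta_1)$ to $(\gamma_0,\gamma_1)$. The paper's version differs only cosmetically, splitting into the two cases according to which term in the $\min$ realizes $0$ and using the normalized maps $\psi_i=\frac{|w_i|}{w_i}\frac{\cdot-z}{1-\overline{\cdot}z}$ (which already send the second zero to the positive real axis) in place of your rotation $R_\lambda$ sandwiched between $\psi_{\beta_0}$ and $\psi_{\gamma_0}^{-1}$.
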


\begin{proof}
		For points ${\bf a}=(\beta_{0},\beta_{1})$, ${\bf b}=(\gamma_{0},\gamma_{1})$, if $d_2([{\bf a}],[{\bf b}])=0$, there are two cases. 
	
	In case 1, $$
	0=\max(\frac{1}{2}\ln\frac{1}{\sqrt{1-|w_{2}|^{2}}},\frac{1}{2}\ln\frac{1}{\sqrt{1-|w_{1}|^{2}}})\leq|\ln\frac{1}{\sqrt{1-|w_{2}|^{2}}}-\ln\frac{1}{\sqrt{1-|w_{1}|^{2}}}|,$$
	i.e., $|w_{2}|=|w_{1}|=0$, which means each of the Blaschke products $B_{\bf a}$ and $B_{\bf b}$ is with zero points of order two. There exists a  M\"{o}bius transformation $\psi$ such that $B_{\bf a}= B_{\bf b}\circ\psi$, then ${\bf a}\sim_{M} {\bf b}$.
	
	In case 2, $$0=|\ln\frac{1}{\sqrt{1-|w_{2}|^{2}}}-\ln\frac{1}{\sqrt{1-|w_{1}|^{2}}}|\leq \max(\frac{1}{2}\ln\frac{1}{\sqrt{1-|w_{2}|^{2}}},\frac{1}{2}\ln\frac{1}{\sqrt{1-|w_{1}|^{2}}}),$$
	i.e., $|w_{2}|=|w_{1}|$, there exists M\"{o}bius transformations $\psi_{1}=\frac{|w_{1}|}{w_{1}}\frac{\beta_{0}-z}{1-\overline{\beta_{0}}z}$ and $\psi_{2}=\frac{|w_{2}|}{w_{2}}\frac{\gamma_{0}-z}{1-\overline{\gamma_{0}}z}$ such that $B_{\bf a}\circ\psi_{1}=B_{\bf b}\circ\psi_{2}$, then ${\bf a}\sim_{M} {\bf b}$, which shows that $d_2$ is a metric.
\end{proof}

\section*{Declarations}

\noindent \textbf{Ethics approval}

\noindent Not applicable.

\noindent \textbf{Competing interests}

\noindent The authors declare that there is no conflict of interest or competing interest.

\noindent \textbf{Authors' contributions}

\noindent All authors contributed equally to this work.

\noindent \textbf{Funding}

\noindent The second author was partially supported by National Natural Science Foundation of China (Grant No. 12471120). The third author was partially supported by National Natural Science Foundation of China (Grant No. 11901229 and 12371029).

\noindent \textbf{Availability of data and materials}

\noindent Data sharing is not applicable to this article as no data sets were generated or analyzed during the current study.

\bibliographystyle{plain}
\bibliography{reference}

\end{document}